\theoremstyle{plain}
\newtheorem{theorem}{Theorem}[section]
\newtheorem{cor}[theorem]{Corollary}
\newtheorem{prop}[theorem]{Proposition}
\newtheorem{lemma}[theorem]{Lemma}
\newcounter{proofcount}
\newtheorem*{claim*}{Claim}
\newenvironment{claimproof*}[1][Proof of Claim.] 
{%
	\proof[#1]%
	
}
{%
	\endproof%
}
\newtheorem{thm}{Theorem}
\theoremstyle{definition}
\newtheorem{remark}[theorem]{Remark}
\newtheorem{fact}[theorem]{Fact}
\newtheorem{definition}[theorem]{Definition}
\newtheorem{example}[theorem]{Example}
\newcommand{\nc}{\newcommand}
\nc{\Z}{\mathbb{Z}}
\nc{\Q}{\mathbb{Q}}
\nc{\N}{\mathbb{N}}
\nc{\F}{\mathbb{F}}
\nc{\UU}{\mathbb{U}}
\nc{\C}{\mathbb{C}}
\nc{\R}{\mathbb{R}}
\nc{\Pp}{\EuScript{P}}
\nc{\Ff}{\EuScript{F}}
\nc{\Kk}{\EuScript{K}}
\nc{\m}{\mathfrak{m}}
\nc{\g}{\mathfrak{g}}
\nc{\lr}{\mathfrak{l}}
\nc{\z}{\mathfrak{z}}
\nc{\h}{\mathfrak{h}}
\nc{\A}{\mathfrak{a}}
\nc{\B}{\mathfrak{b}}
\nc\II{\mathcal I}
\nc{\stt}{\operatorname{St}}
\nc{\stab}{\operatorname{Stab}}
\nc{\GO}[1]{G_{#1}^{00}}
\nc{\sbgp}[1]{\langle\xspace {#1}\xspace\rangle}
\nc{\Conn}[1]{\langle\xspace {X}\xspace\rangle^{00}_{#1}}
\nc{\band}[1]{\bar d_{\mathcal{#1}}}
\nc\Def{\operatorname{Def}}
\nc{\ad}{\operatorname{ad}}
\nc{\dcl}{\operatorname{dcl}}
\nc{\acl}{\operatorname{acl}}
\nc{\cb}{\operatorname{Cb}}
\nc{\tp}{\operatorname{tp}}
\nc{\stp}{\operatorname{stp}}
\nc{\ann}{\operatorname{ann}}
\nc{\Aut}{\operatorname{Aut}}
\nc{\im}{\operatorname{im}}
\nc\inv{ ^{-1}}
\nc\U{\operatorname{U}}
\nc{\cf}{\text{cf.\,}}
\nc{\eg}{\text{e.g. }}
\def\Ind#1#2{#1\setbox0=\hbox{$#1x$}\kern\wd0\hbox to
	0pt{\hss$#1\mid$\hss} \lower.9\ht0\hbox to
	0pt{\hss$#1\smile$\hss}\kern\wd0}
\def\Notind#1#2{#1\setbox0=\hbox{$#1x$}\kern\wd0\hbox to
	0pt{\mathchardef\nn="0236\hss$#1\nn$\kern1.4\wd0\hss}\hbox to
	0pt{\hss$#1\mid$\hss}\lower.9\ht0 \hbox to
	0pt{\hss$#1\smile$\hss}\kern\wd0}
\def\indip{\mathop{\ \ \hbox to 0pt{\hss$\mid^{\hbox to
				0pt{$\scriptstyle P$\hss}}$\hss}
		\lower4pt\hbox to 0pt{\hss$\smile$\hss}\ \ }}
\def\nindip{\mathop{\ \ \hbox to 0pt{\hss$\!\not{\mid}^{\hbox to
				0pt{$\scriptstyle\, P$\hss}}$\hss}
		\lower4pt\hbox to 0pt{\hss$\smile$\hss}\ \ }}
\title[Rings and the CBP]{Rings of finite Morley rank without the canonical base property}
\address{ \, Abteilung f\"ur Mathematische Logik, Mathematisches Institut,
	Albert-Ludwigs-Universit\"at Freiburg, Ernst-Zermelo-Stra\ss e 1, D-79104
	Freiburg, Germany}
\email{1michaelloesch@gmail.com}
\address{Departamento de \'Algebra, Geometr\'ia y Topolog\'ia; Facultad de Matem\'aticas;
	Universidad Complutense de Madrid; 28040 Madrid, Spain}
\email{dpalacin@ucm.es}
\date{\today}
\author{Michael Loesch and Daniel Palac\'in}
\thanks{The second author is supported by Spanish STRANO PID2021-122752NB-I00 and Grupos UCM 910444,  also was partially supported by the contract 2020-T1/TIC-20313 from Community of Madrid. 
}
\begin{document}
	
	\begin{abstract} 
	We present numerous natural algebraic examples without the so-called Canonical Base Property (CBP). 
		We prove that every commutative unitary ring of finite Morley rank without finite-index proper ideals satisfies the CBP if and only if it is a field, a ring of positive characteristic or a finite direct product of these. In addition, we construct a CM-trivial commutative local ring with a finite residue field without the CBP. Furthermore, we also show that finite-dimensional non-associative algebras over an algebraically closed field of characteristic $0$ give rise to triangular rings without the CBP. This also applies to Baudisch's $2$-step nilpotent Lie algebras, which yields the existence of a $2$-step nilpotent group of finite Morley rank whose theory, in the pure language of groups, is CM-trivial and does not satisfy the CBP. 
	\end{abstract}
	
\maketitle
	
	\section{Introduction}
	
	The interplay between model theory and algebraic structures has been in the spotlight of model theorist since the contributions of Macintyre \cite{aM71}, who proved that every uncountably categorical infinite field is algebraically closed and hence (almost) strongly minimal. Afterwards, during the decade of the 1970's some results on rings were obtained through the lens of categoricity theory, and more generally of Shelah's stability theory. We refer to \cite{CR76,aC77} for results on commutative unitary rings with an infinite-index Jacobson ideal, as well as to \cite[Appendix A.1]{BN94} for some further results.
	
%	Concerning commutative unitary rings,  Cherlin and Reineke \cite[Corollary 3.5]{CR76} proved that a commutative unitary local ring with an infinite-index Jacobson radical is uncountably categorical if and only if it is a noetherian ring whose maximal ideal is nilpotent, and its residue field is algebraically closed. It is worth noticing that in the finite-index case no such algebraic description is known. In fact, in the finite-index  case serious difficulties arise as it was pointed out in \cite{CR76}. 
	
	In contrast to Macintyre's result, Podewski and Reineke \cite{PR74} proved that not every uncountably categorical ring is almost strongly minimal. They showed that the theory of the local ring $\C[x]/(x^2)$ is uncountably categorical but not almost strongly minimal, which was the  first natural occurring example of this kind of theories. Furthermore, as we will show in this paper (Lemma \ref{L:Equivalence}), it turns out that Podewski and Reineke's result implies that the ring $\C[x]/(x^2)$ is also an example of an uncountably categorical theory without the Canonical Base Property (CBP). 
		
	The CBP is a model-theoretic property introduced formally by Moosa and Pillay \cite{MP08}. Its formulation was motivated by the interpretation of Pillay \cite{aP02} of a result of Campana and Fujiki in compact complex spaces, and similar results of Pillay and Ziegler \cite{PZ03} for enriched fields in characteristic $0$. In particular, they proved the CBP for the finite rank part of the theory of differentially closed fields of characteristic $0$, which allowed them to simplify Hrushovski's proof of the Mordell-Lang conjecture for function fields in characteristic $0$.  We refer to \cite{zC12,zC14} for other applications and for an exposition of results around the CBP in the literature. 
		
	While it was conjectured that finite Morley rank theories satisfy the CBP, in \cite{HPP13} Hrushovski, the second author of this paper and Pillay presented an uncountably categorical structure without the CBP. The structure from \cite{HPP13} is an additive cover of the field of complex numbers which is interpretable in $\mathrm{ACF}_0$. This approach was also used in \cite{mL22} to produce other examples without the CBP. Also, using additive covers but of Baudisch's Lie algebras \cite{aB09}, Blossier and Jimenez \cite{BJ22} produced another example which in addition is CM-trivial. So, it does not interpret an infinite field, while the previous examples do.
	
	Therefore, at the moment of writing there are already several uncountably categorical structures without the CBP. Nonetheless, while these are produced from algebraic structures, none of these is an algebraic object per se, except the one in \cite{HPP13} after the reinterpretation given in \cite[Section 3]{mL23}. In fact, bearing this reinterpretation in mind, the example from \cite{HPP13} is essentially the ring $\C[x]/(x^2)$.
	
	The purpose of this paper is to give a large variety of natural algebraic examples whose theories have finite Morley rank  but do not satisfy the CBP. These examples will be rings and groups. 
	
	In Section \ref{s:Triangular} we introduce a very general framework in which all examples will fit. The construction is elementary: given two non-associate rings $R$ and $M$ together with two group homomorphisms $R\to \mathrm{End}_\Z(M)$ for left and right multiplication, we introduce the ring $\Lambda(R,M)$ whose domain is $R\times M$ with addition coordinatewise and with multiplication given by
	\[
	(r,m) \cdot (r',m') = (rr' , r\cdot m' + m\cdot r' + mm').
	\] 
	For instance, when $M$ has trivial multiplication the ring $\Lambda(R,M)$ can be naturally seen as a subring of matrices with the usual operations. This construction is then applied in Section \ref{s:NonA} to obtain our first examples of finite Morley rank rings without the CBP. There we prove that the  toy example of a finite Morley rank theory without the CBP is the ring theory of $\Lambda(k,V)$, where $V$ is a finite-dimensional vector space over an algebraically closed field $k$ of characteristic $0$ (Corollary \ref{C:Vector}). More generally, the above construction applies as well to Lie algebras. For a Lie algebra $\g$, it turns out that $\Lambda(\g,\g^+)$ is the semidirect sum $\g\ltimes \g^+$, where $\g^+$ stands for the abelian group $\g$ with trivial Lie bracket. In this case we prove the following general statement:
	
	\begin{thm}[Theorem \ref{T:Lie}]\label{T:A}
		Let $\g$ be a finite-dimensional Lie $k$-algebra over an algebraically closed field $k$ of characteristic $0$. If $ \g$ is non-abelian, then the Lie ring $\g\ltimes \g^+$ has finite Morley rank and does not satisfy the CBP.
	\end{thm}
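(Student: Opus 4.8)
The plan is to settle finite Morley rank first and then to locate a single canonical base configuration violating the CBP, using non-abelianity to guarantee that the configuration is genuinely twisted. For the rank, I would identify $\g\ltimes\g^+$ with the Lie algebra of dual numbers $\g\otimes_k k[\epsilon]/(\epsilon^2)$: writing an element as $x+\epsilon v$ with $x,v\in\g$, the semidirect bracket becomes $[x+\epsilon v,\,x'+\epsilon v']=[x,x']+\epsilon([x,v']+[v,x'])$, which is exactly the multiplication of $\Lambda(\g,\g^+)$. This is a finite-dimensional $k$-algebra, hence definable in $k\models\mathrm{ACF}_0$ as $k^{2\dim_k\g}$ equipped with the bilinear bracket, so its pure Lie-ring structure is a reduct of a structure interpretable in $\mathrm{ACF}_0$ and therefore has finite Morley rank. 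I would then record the definable abelian ideal $\g^+=\{\,\epsilon v:v\in\g\,\}$, on which the quotient $\g\cong(\g\ltimes\g^+)/\g^+$ acts by the adjoint representation; non-abelianity is precisely what keeps this action, and hence the bracket, nonzero.

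To refute the CBP I would exhibit real tuples $a$ and $c=\cb(\stp(a/c))$ for which $\stp(c/a)$ is not almost internal to the family $\mathcal P$ of non-locally-modular minimal types. Since $\g\ltimes\g^+$ is interpretable in $\mathrm{ACF}_0$, I would first argue that $\mathcal P$ is accounted for by a single definable copy of the field $k$; recovering this $k$ inside the \emph{pure} Lie ring is where non-abelianity re-enters, through the adjoint action and centralisers, and it both produces a non-modular minimal type and identifies it with the field line. I would then view the projection $\g\ltimes\g^+\to\g$ as a non-split additive cover with fibre the vector group $\g^+$, and run the computation of \cite{HPP13,BJ22}: the canonical base of the germ of a generic definable section $\g\to\g\ltimes\g^+$ is a torsor under the definable derivations $\g\to\g^+$, twisted by $\ad$, and this torsor carries no definable $k$-linear structure. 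Where possible I would shortcut this by reusing the failure of the CBP for the toy ring $\Lambda(k,k)=k[\epsilon]/(\epsilon^2)$ (Corollary \ref{C:Vector}, via Lemma \ref{L:Equivalence}) and transferring it along the interpretation, the transfer being legitimate exactly because $\mathcal P$ reduces to the same field $k$ on both sides.

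The main obstacle is the non-internality step: proving that the witnessing canonical base admits no definable section and is not almost $k$-internal, rather than merely modular. This is the point at which $\g$ non-abelian is used in full, and it is sharp, since for abelian $\g$ the bracket vanishes, $\g\ltimes\g^+$ collapses to a pure $\Q$-vector space, which is $1$-based and trivially satisfies the CBP. I would handle it by a derivation-theoretic computation, exhibiting a nonzero outer derivation class obstructing a definable splitting of the cover and checking that this class cannot be absorbed into any field-internal datum; verifying this last non-absorption rigorously, and confirming along the way that no other non-modular minimal type of the reduct could capture $c$, is the heart of the argument and the place I expect the real work to lie.
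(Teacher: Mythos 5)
Your finite Morley rank argument is fine (the dual-numbers identification $\g\otimes_k k[\epsilon]/(\epsilon^2)$ is correct and even avoids Ado's theorem at this stage), but both of your proposed routes to refuting the CBP have genuine gaps. The fatal one is the ``shortcut'': transferring the failure of the CBP from $\Lambda(k,k)\cong k[\epsilon]/(\epsilon^2)$ (Corollary \ref{C:Vector}) along an interpretation. The CBP does \emph{not} transfer along interpretations or reducts in either direction: every structure in play here, including $k[\epsilon]/(\epsilon^2)$ and $\g\ltimes\g^+$ themselves, is interpretable in $\mathrm{ACF}_0$, whose theory satisfies the CBP trivially (every type is almost internal to the field), and that is precisely what makes these examples interesting --- see \cite{HPP13}. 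Failure of the CBP for a structure $N$ interpretable in $M$ says nothing about $M$ unless one also proves that the structure induced by $M$ on the interpreted copy of $N$ coincides with the native structure of $N$ and that the witnessing configuration lifts; your justification (``$\Pp$ reduces to the same field $k$ on both sides'') does not address this, because the obstruction is not the identification of $\Pp$ but the fact that canonical bases and internality are computed against different families of definable sets in a reduct. Moreover, it is not even clear that the \emph{pure} Lie ring $\g\ltimes\g^+$ interprets a copy of $k[\epsilon]/(\epsilon^2)$, or a field at all, uniformly in non-abelian $\g$: your preliminary step of recovering $k$ via the adjoint action and centralisers is asserted, not proved, and the paper never needs it. Your direct route (the canonical base of the germ of a generic section as a torsor under derivations twisted by $\ad$) has the right flavour, but you explicitly defer its key step --- the non-internality of that torsor --- and that step is the entire content of the theorem.

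For comparison, the paper closes exactly this gap without ever computing a canonical base or producing a definable field. Assuming the CBP, Proposition \ref{P:Ann} (a Kowalski--Pillay style consequence of the CBP for definable connected rings) together with Lemma \ref{L:Analysis} forces $\g\ltimes\g^+/\z(\g\ltimes\g^+)$ to be \emph{almost internal to a definable subgroup of} $\{0\}\times\g$, witnessed by a finite set $A_0$. One then realises $\g$ as a linear Lie algebra (Ado), uses Nowicki's theorem \cite{aN94} to get a field derivation $\delta$ on $K$ whose constant field is exactly $\Q(B)^{\rm alg}$, where $B$ collects the matrix coefficients of $A_0$ and of a basis of $\g$, lifts $\delta$ coordinatewise to a Lie-ring derivation $\hat\delta$ of $\g$ vanishing on $A_0$ (Lemma \ref{L:Leibniz}), and converts $\hat\delta$ into a Lie-ring automorphism $\sigma(c,z)=(c,z+\hat\delta(c))$ of $\g\ltimes\g^+$ (Lemma \ref{L:Aut}) fixing $A_0\cup(\{0\}\times\g)$ pointwise. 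Non-abelianity is used only to find $b\in\g$ with $\hat\delta(b)\notin\z(\g)$ (since $[a,\g]$ is uncountable while $\hat\delta$ vanishes on a countable set), and characteristic $0$ gives torsion-freeness, so the iterates $\sigma^n(b,0)$ land in infinitely many distinct cosets of $\z(\g\ltimes\g^+)$; this contradicts the algebraicity of $(b,0)+\z(\g\ltimes\g^+)$ over $A_0\cup(\{0\}\times\g)$ that almost internality demands. In short: the rigorous form of your ``outer derivation obstructing a definable splitting'' is an automorphism-orbit argument against the group-theoretic consequence of the CBP, not a torsor computation and not a transfer; if you want to repair your proposal, this is the argument to supply.
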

	
	A similar result is obtained replacing $\g$ by Baudisch's Lie $\F_q$-algebra \cite{aB09}, where $q$ is a $p$-power for $p>2$ (Theorem  \ref{T:Lie_B}). In particular, applying the corresponding result to a Heisenberg Lie $\C$-algebra or to a Baudisch Lie $\F_q$-algebra, we obtain the existence of $2$-step nilpotent Lie rings of finite Morley rank without the CBP. Using the Baker-Campbell-Hausdorff formula, we then show:
	\begin{thm}[Corollary \ref{C:CBP-Group}]
		There is a $2$-step nilpotent (CM-trivial) connected group of finite Morley rank whose theory in the pure language of groups does not satisfy the CBP. 
	\end{thm}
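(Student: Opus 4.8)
The plan is to start from a Lie ring supplied by the results of Section~\ref{s:NonA} and to transport it to a group by means of the Baker--Campbell--Hausdorff (BCH) formula. Concretely, let $\B$ denote Baudisch's Lie $\F_q$-algebra with $q$ a power of a prime $p>2$, and let $\mathfrak{L}=\B\ltimes\B^+$ be the Lie ring furnished by Theorem~\ref{T:Lie_B}; thus $\mathfrak{L}$ is $2$-step nilpotent, has finite Morley rank, is CM-trivial and fails the CBP. Since $\mathfrak{L}$ is $2$-step nilpotent, all iterated brackets of length $\geq 3$ vanish, and because $2$ is invertible in $\F_q$ the BCH series truncates to a single quadratic term. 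I would therefore define on the underlying set of $\mathfrak{L}$ the operation
\[
x*y \;=\; x+y+\tfrac12[x,y],
\]
and check that $(\mathfrak{L},*)$ is a group $G$: the Jacobi identity together with $2$-step nilpotency yields associativity, $0$ is the identity and $-x$ the inverse of $x$. A direct computation of the group commutator gives $x*y*x\inv*y\inv=[x,y]$, whence $[G,G]$ is central and $G$ is $2$-step nilpotent.

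The heart of the argument is the bi-interpretability of $G$ with $\mathfrak{L}$, carried out so that $G$ is seen in the \emph{pure} language of groups. In one direction $*$ is quantifier-free definable from $+$ and $[\cdot,\cdot]$, the halving being available because $x\mapsto 2x$ is a definable bijection with definable inverse; hence $G$ is interpretable in $\mathfrak{L}$. In the other direction I would recover the Lie ring inside $(G,*)$ as follows: the bracket is the group commutator $[x,y]=x*y*x\inv*y\inv$, which is central, and the sum is recovered by $x+y=(x*y)*d\inv$, where $d$ is the unique central element with $d*d=x*y*x\inv*y\inv$; such a square root exists and is unique because $2$ is invertible, so $+$ and $[\cdot,\cdot]$ are definable in the pure group $G$. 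These two interpretations are mutually inverse up to definable isomorphism, so $G$ and $\mathfrak{L}$ are bi-interpretable.

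It then remains to transfer the desired properties along this bi-interpretation. Finite Morley rank and CM-triviality are preserved, as is $2$-step nilpotency (already verified directly), so $G$ is a $2$-step nilpotent CM-trivial group of finite Morley rank. Connectedness I would obtain from that of the underlying structure: in characteristic $0$ the analogous $G$ is torsion-free and divisible, since $x^{*n}=nx$, and hence has no proper definable subgroup of finite index; in the present positive-characteristic case $G$ has exponent $p$ and one argues connectedness from the connectedness of $\mathfrak{L}$ (passing to the connected component $G^0$ if needed, which one checks retains all the stated properties). Finally, and this is the crucial point, the CBP is an invariant of bi-interpretation; since $\mathfrak{L}$ does not satisfy the CBP, neither does $G$. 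The main obstacle I anticipate is precisely this last transfer: one must make sure that the failure of the CBP---together with CM-triviality---genuinely survives the BCH interpretation, which relies on the careful definability, in the pure group language alone, of the addition via the canonical central square root and of the bracket via the group commutator.
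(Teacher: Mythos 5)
Your construction coincides with the paper's own (Section \ref{s:Group}): apply the truncated Baker--Campbell--Hausdorff operation $x*y=x+y+\tfrac12[x,y]$ to a $2$-step nilpotent Lie ring from Section \ref{s:NonA}, and recover $+$ and the bracket from $*$ exactly as you do (group commutator, plus the unique $*$-square root, using that $*$-powers are additive multiples). Where you diverge is the final transfer of the CBP failure. The paper does not invoke a general invariance principle; it proves a concrete lemma: if $L/Z(L)$ is not almost $\Pp$-internal --- which is what the \emph{proof} (not the statement) of Theorem \ref{T:Lie_B} actually establishes --- then $G/Z(G)$ is not almost $\Pp$-internal either, because the strongly minimal sets and the centre are the same sets in both structures; failure of the CBP for the pure group then follows from the Kowalski--Pillay theorem (the group analogue of Proposition \ref{P:Ann}), which requires $G$ connected. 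Your route --- invariance of the CBP under bi-interpretation --- needs only the \emph{statement} of Theorem \ref{T:Lie_B}, which is a genuine economy, and the ``main obstacle'' you flag dissolves once you note that your situation is stronger than bi-interpretability: $G$ and $\mathfrak{L}$ share the same universe and each structure's primitives are $0$-definable in the other, so the two structures have literally the same definable sets, imaginaries, types, canonical bases and (non-modular) strongly minimal sets, and the CBP for one is verbatim the CBP for the other; no general preservation theorem is needed. One correction: your fallback of ``passing to $G^0$ if needed'' should be deleted --- failure of the CBP for $\mathrm{Th}(G)$ does not obviously descend to the pure-group theory of a proper definable subgroup --- and it is not needed anyway: a group of finite Morley rank is connected if and only if its universe has Morley degree $1$, a condition that depends only on the family of definable sets, and the additive group of $\mathfrak{L}$, namely $\B^+\times\B^+$ with $\B$ connected (Fact \ref{F:Baudisch}), is connected; hence $(G,*)$ is connected outright.
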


	Finally, in Section \ref{s:Rings} we study which commutative unitary rings of finite Morley rank satisfy the CBP. Using results of Cherlin and Reineke \cite{CR76} and the construction from Section \ref{s:Triangular}, we obtain a purely algebraic classification of this under the assumption that the ring has no finite-index proper ideals. First we observe that it suffices to analyse local rings (see Lemma \ref{L:Equivalence}) and that in this case the CBP is equivalent to being almost strongly minimal. In that context we generalise the result of Podewski and Reineke \cite{PR74} and prove the following:

	\begin{thm}[Theorem \ref{T:Charac0}]
	Let $R$ be a commutative unitary local ring of finite Morley rank and suppose that it has characteristic $0$. If $R$ is not a field, then it is not almost strongly minimal nor satisfies the CBP.
	\end{thm}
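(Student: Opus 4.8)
The plan is to reduce, through a definable quotient, to the vector space case already handled in Corollary \ref{C:Vector}. Write $\m$ for the maximal ideal of $R$. Since $R$ is local, $\m$ is precisely the set of non-units and is therefore definable. Using the descending chain condition on definable ideals in a ring of finite Morley rank, the chain $\m\supseteq\m^2\supseteq\cdots$ stabilises, and Nakayama's lemma then forces $\m$ to be nilpotent; this is part of the structure theory of Cherlin and Reineke \cite{CR76}. As $R$ is not a field we have $\m\neq 0$, so Nakayama's lemma also yields $\m^2\subsetneq\m$.

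Next I would identify the residue field $k:=R/\m$. If $k$ had positive characteristic $p$, then $p\cdot 1\in\m$, and nilpotency of $\m$ would give $p^{N}\cdot 1=0$ for some $N$, contradicting $\operatorname{char}R=0$; hence $k$ has characteristic $0$, and being an infinite field of finite Morley rank it is algebraically closed by Macintyre's theorem. Now consider $S:=R/\m^{2}$, which is interpretable in $R$. It is a local ring whose maximal ideal $\mathfrak{n}:=\m/\m^{2}$ satisfies $\mathfrak{n}^{2}=0$ and $\mathfrak{n}\neq 0$, with residue field again $k$; thus $\mathfrak{n}$ is a $k$-vector space, of finite nonzero dimension because it has finite Morley rank and $k$ is infinite. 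Since $k$ has characteristic $0$ it is separable over its prime field, so the square-zero extension $0\to\mathfrak{n}\to S\to k\to 0$ admits a ring-theoretic section, a coefficient field (equivalently $S$ is a finite-dimensional Artinian local $\Q$-algebra and one applies Cohen's structure theorem). Consequently $S\cong\Lambda(k,\mathfrak{n})$ as rings, where $\mathfrak{n}$ carries the trivial multiplication.

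By Corollary \ref{C:Vector}, the ring $\Lambda(k,\mathfrak{n})$ with $\mathfrak{n}\neq 0$ does not satisfy the CBP. As the CBP is inherited by structures interpretable in a given one, and $S$ is interpretable in $R$, the ring $R$ does not satisfy the CBP. Finally, the residue field of $R$ is infinite, so Lemma \ref{L:Equivalence} applies and the CBP is equivalent to almost strong minimality for $R$; hence $R$ is not almost strongly minimal either.

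The main obstacle is the structural reduction of the second paragraph: establishing that $\m$ is nilpotent with residue characteristic $0$, and above all producing the coefficient field that gives $S\cong\Lambda(k,\mathfrak{n})$. This splitting is exactly where characteristic $0$ enters, since in positive residue characteristic the square-zero extension need not split, which is consistent with the different behaviour recorded elsewhere in the paper. One should also verify carefully that $\mathfrak{n}$ is genuinely finite-dimensional over $k$ and that the transfer of the CBP along interpretations is invoked in the correct direction.
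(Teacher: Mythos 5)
Most of your reduction is fine as far as it goes: nilpotency of $\mathfrak{m}$ and the residue-field dichotomy are indeed part of the Cherlin--Reineke structure theory (Fact \ref{F:CR-FMR}), $\mathfrak{m}^2$ is definable (Fact \ref{F:Def} applies, the residue field being infinite in characteristic $0$), $\mathfrak{n}=\mathfrak{m}/\mathfrak{m}^2$ is a nonzero finite-dimensional vector space over the algebraically closed residue field $k$, and the square-zero extension splits in characteristic $0$, giving $R/\mathfrak{m}^2\cong\Lambda(k,\mathfrak{n})$ as rings. The fatal step is the transfer: ``the CBP is inherited by structures interpretable in a given one'' is false, and its failure is precisely the phenomenon this paper (and \cite{HPP13}) is built on. Indeed, by Remark \ref{R:Interpret} the ring $\Lambda(k,V)$ is itself interpretable in the pure algebraically closed field $k$, and $\mathrm{ACF}_0$ satisfies the CBP (every type is internal to the field sort, which is the non-modular strongly minimal set), yet Corollary \ref{C:Vector} says exactly that the pure ring $\Lambda(k,V)$ does not satisfy it. The point is that an interpreted structure, taken in its own language, is a \emph{reduct} of the structure induced on it from the ambient theory: in the reduct there are more automorphisms and fewer definable sets, so algebraicity and internality can be lost, and the CBP does not descend. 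Consequently, from ``$S=R/\mathfrak{m}^2$ as a pure ring lacks the CBP'' you cannot conclude anything about $R$: if $R$ had the CBP, that would at best say something about $S$ equipped with all the structure induced from $R$, which is in general strictly richer than its ring structure.

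The paper avoids any such transfer by working in $R$ itself. Cohen's structure theorem is applied directly to $R$ (legitimate, since a nilpotent maximal ideal makes $R$ complete), yielding a coefficient field $k\subset R$ with $R=k\oplus\mathfrak{m}$, i.e.\ $R\cong\Lambda(k,\mathfrak{m})$, where now $\mathfrak{m}$ is a nilpotent $k$-algebra rather than a square-zero one. Then Theorem \ref{T:Algebra} --- which treats arbitrary $k$-algebras $A$ with $\ann(A)\neq 0$, not only vector spaces with trivial product --- produces, from a derivation of $k$ and a nonzero element of $\ann(\mathfrak{m})$, automorphisms of $R$ itself (via Lemma \ref{L:Aut}) moving a generic element through infinitely many conjugates while fixing the purported internality data, contradicting the consequence of the CBP extracted in Lemma \ref{L:Analysis}. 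Note also that you cannot repair your route by lifting: the automorphisms of $R/\mathfrak{m}^2$ used for Corollary \ref{C:Vector} need not extend to automorphisms of $R$. The fix is to replace your passage through $S$ by the splitting $R\cong\Lambda(k,\mathfrak{m})$ and to invoke Theorem \ref{T:Algebra} in place of Corollary \ref{C:Vector}; your final appeal to Lemma \ref{L:Equivalence} to get failure of almost strong minimality is then exactly what the paper does.
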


	On the contrary, every commutative unitary local ring of positive characteristic is almost strongly minimal whenever its residue field is infinite (Theorem \ref{T:Charac+}), reinforcing the intuition behind the question from \cite[p. 874]{HPP13}. Altogether, we prove:
\begin{thm}[Corollary \ref{C:Charac}]
	Let $R$ be a commutative unitary ring of finite Morley rank without finite-index proper ideals. The theory of $R$ satisfies the CBP if and only if $R$ is a direct product $R_1\times \ldots \times R_n$ where each $R_i$ is a field or a local ring of positive characteristic. 
\end{thm}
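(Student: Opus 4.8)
The plan is to reduce the corollary to the two structural theorems of this section by first passing from $R$ to its local factors. Invoking the structure theory of commutative rings of finite Morley rank of Cherlin and Reineke \cite{CR76}, I would begin by decomposing $R$ as a finite direct product $R \cong R_1 \times \cdots \times R_n$ of local rings $R_i$ of finite Morley rank. The hypothesis that $R$ has no proper ideal of finite index enters here in two ways: it guarantees that no factor $R_i$ is finite, and, more to the point, that the residue field $R_i/\m_i$ of each local factor is infinite --- for a finite residue field would make the maximal ideal $\m_i$ of finite index in $R_i$, and pulling it back along the projection $R \to R_i$ would yield a proper ideal of finite index in $R$.

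With this decomposition in hand, I would use Lemma \ref{L:Equivalence} to reduce the CBP of $R$ to that of each factor $R_i$, and, since each $R_i$ is local, to the almost strong minimality of each $R_i$. Both directions of the corollary then become a factor-by-factor check. For the forward direction, assuming $R$ satisfies the CBP, each $R_i$ is almost strongly minimal; if $R_i$ has characteristic $0$ then Theorem \ref{T:Charac0} forces it to be a field, while if $R_i$ has positive characteristic it is already of the allowed form. For the converse, a field of finite Morley rank is algebraically closed by Macintyre's theorem \cite{aM71}, hence strongly minimal and so satisfies the CBP, and a local ring of positive characteristic has infinite residue field by the reduction above, so Theorem \ref{T:Charac+} shows it is almost strongly minimal; Lemma \ref{L:Equivalence} then transfers the CBP from the factors back to $R$.

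Since the real content is packaged in Theorems \ref{T:Charac0} and \ref{T:Charac+}, the only delicate point in the corollary itself is the reduction step. The main obstacle I anticipate is justifying cleanly that the Cherlin--Reineke structure theory yields a product of local rings precisely under the hypothesis of no finite-index proper ideals, and that Lemma \ref{L:Equivalence} provides a genuine equivalence --- in both directions --- between the CBP of a finite product and the CBP of each of its local factors, rather than a one-sided implication.
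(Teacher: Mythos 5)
Your proposal is correct and follows essentially the same route as the paper, whose proof of Corollary \ref{C:Charac} is precisely the combination of Lemma \ref{L:Equivalence} with Theorems \ref{T:Charac0} and \ref{T:Charac+} that you describe, with the translation between ``no finite-index proper ideals'' and ``every residue field is infinite'' handled just as you do. The point you flag as delicate is already settled in the paper: Lemma \ref{L:Equivalence} is stated as a genuine equivalence between the CBP of $R$ and the almost strong minimality of each definable local factor with infinite residue field, so both directions transfer exactly as your argument requires.
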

	
	The situation is drastically distinct for commutative unitary local rings of finite Morley rank with a finite residue field, which are necessarily of positive characteristic. In this case, there is no algebraic description available nor a classification from the point of view of geometric stability. Nonetheless, we give a characterization for uncountably categorical equicharacteristic local rings from this family that are almost strongly minimal (Proposition \ref{P:AlmostStronglyMinimal}) and for those that satisfy the CBP (Theorem \ref{T:CBP}). On the other hand, we conclude the paper by giving a CM-trivial example of a local ring without the CBP, using Theorem \ref{T:Lie_B}.

\begin{thm}[Theorem \ref{T:CM-trivialRing}]
	There is a commutative unitary local ring with a finite-index residue field whose theory is CM-trivial of finite Morley rank but does not satisfy the CBP.
\end{thm}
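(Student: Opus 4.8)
The plan is to obtain the failure of the CBP from a bi-interpretation with the $2$-step nilpotent Lie ring $L=\B\ltimes\B^+$ attached in Theorem \ref{T:Lie_B} to Baudisch's Lie $\F_q$-algebra $\B$, which is CM-trivial of finite Morley rank and fails the CBP. Being $2$-step nilpotent, $L$ has central derived subring $Z=[L,L]$, and its bracket factors through an alternating $\F_q$-bilinear form $\omega\colon W\times W\to Z$ on $W=L/Z$; conversely $(W,Z,\omega)$ reconstitutes $L$, so the entire relevant model theory of $L$ is carried by this alternating form.

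First I would build the ring. A commutative multiplication induces only \emph{symmetric} pairings on $\m/\m^2$, so $\omega$ cannot be recorded by squaring; the device that repairs this, and where the hypothesis $p>2$ enters, is to double the space. In the notation of Section \ref{s:Triangular} set $R=\Lambda(\F_q,N)$, where $N=(W\oplus W)\oplus Z$ is the non-unital commutative ring with $Z\cdot N=0$ and with multiplication $(W\oplus W)\times(W\oplus W)\to Z$ given by the symmetric pairing
\[
\mu\bigl((a,b),(c,d)\bigr)=\omega(a,d)+\omega(c,b).
\]
A direct check shows $R$ is an associative, commutative, unitary ring with $\m^3=0$; it is local with maximal ideal $\m=N$ and residue field $R/\m\cong\F_q$, which is finite. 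As $R$ is assembled from the data $(W,Z,\omega)$, which is $0$-interpretable in $L$, the ring $R$ is interpretable in $L$; since finite Morley rank and CM-triviality descend to interpretable structures, $R$ is CM-trivial of finite Morley rank by Theorem \ref{T:Lie_B}.

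The crux, and the step I expect to resist, is the reverse interpretation: recovering $L$, equivalently $\omega$, inside $R$, so that the failure of the CBP travels upward. All that $R$ exposes is the symmetric form $\mu$ on $\m/\m^2$, for which the summands $W\oplus 0$ and $0\oplus W$ are complementary totally isotropic subspaces; but a polarisation of $\mu$ is highly non-unique and no maximal isotropic subspace is definable over finitely many parameters. The resolution is that a canonical polarisation is unnecessary: over a finite parameter set one selects an isotropic complement, on which $\mu$ restricts to an alternating form through $\omega(a,c)=\mu\bigl((a,0),(0,c)\bigr)$, and the homogeneity of the Baudisch geometry ensures that any two such choices differ by an automorphism and so interpret isomorphic copies of $L$. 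This exhibits $L$ as interpretable in $R^{\mathrm{eq}}$; since the CBP passes to interpretable structures, its failure in $L$ forces its failure in $R$.

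It remains to check that the auxiliary parameters are harmless: adjoining finitely many constants affects neither CM-triviality nor the failure of the CBP, and it creates no interpretable infinite field, so $R$ remains CM-trivial while its finite Morley rank and finite residue field are untouched. Assembling these points, $R$ is a commutative unitary local ring with finite residue field that is CM-trivial of finite Morley rank and does not satisfy the CBP, as required.
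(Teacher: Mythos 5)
Your ring is essentially the paper's: up to replacing your $(W\oplus W)\oplus Z$ by two full copies of $L$, the symmetric pairing $\mu\bigl((a,b),(c,d)\bigr)=\omega(a,d)+\omega(c,b)$ is exactly the multiplication of the paper's symmetrized doubling $S(\B\ltimes\B^+)$, and your verification that $R$ is a local ring of finite Morley rank with residue field $\F_q$, CM-trivial via interpretability in $\B$ and N\"ubling's theorem, is correct. The fatal gap is the transfer step. The principle you invoke --- that the CBP passes to interpretable structures, so that its failure in $L$ travels up to $R$ --- is false, and the counterexample is the very origin of this subject: the structure of \cite{HPP13} is interpretable in $\mathrm{ACF}_0$, which satisfies the CBP by Pillay--Ziegler \cite{PZ03}, and yet fails the CBP. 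Closer to home, $R$ itself is interpretable in the graded Baudisch algebra $\B$, whose theory is almost strongly minimal (Fact \ref{F:Baudisch}) and hence satisfies the CBP; applying your principle to \emph{that} interpretation would ``prove'' that $R$ satisfies the CBP, the opposite of what is wanted. The underlying reason is that an interpreted copy of $L$ sits inside $R^{\mathrm{eq}}$ with its full induced structure, which is in general strictly richer than the pure Lie ring structure; Theorem \ref{T:Lie_B} asserts failure of the CBP only for the \emph{pure} Lie ring reduct, and the CBP is not preserved under reducts --- this is precisely why Theorem \ref{T:Lie_B} itself requires the delicate two-theory argument comparing $T_0$ with $T$ via \cite{BMPW15}, rather than quoting properties of $\B$. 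So even a successful interpretation of $L$ inside $R$ would prove nothing about $R$.

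There is also a secondary defect inside the interpretation itself: you concede that no maximal isotropic subspace of $\m/\m^2$ is definable over finitely many parameters, and then nonetheless ``select'' one over a finite parameter set; an interpretation requires an actual definable set, and the homogeneity you appeal to only yields uniqueness up to isomorphism of the would-be interpreted structure, not the existence of a defining formula. The paper's proof avoids interpretations altogether and runs the automorphism argument directly in $R$: Lemma \ref{L:Analysis2} shows that the CBP would force $R/\ann(R^\circ)$ to be almost internal to the definable subgroup $\A=(\{0\}\times\{0\})\times(\{0\}\times\z(\B))$, and then the automorphisms $\sigma_i$ of $\g=\B\ltimes\B^+$ produced in the proof of Theorem \ref{T:Lie_B} --- fixing the witnessing parameters and $\{0\}\times\z(\B)$ pointwise while moving some $a+\z(\g)$ infinitely --- are lifted to ring automorphisms $\tau_i\bigl(\alpha+(c,d)\bigr)=\alpha+(\sigma_i(c),\sigma_i(d))$ of $R$, which fix the internality witnesses and $\A$ pointwise yet move $(a,0)+\ann(R^\circ)$ through infinitely many classes, a contradiction. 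Any repair of your proposal must make a direct argument of this kind; the bi-interpretation route cannot be made to work.
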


	Basic notions and facts concerning the CBP are recalled in Section \ref{s:CBP}. Nonetheless, throughout the paper, we assume some familiarity with finite Morley rank (groups) and stability theory. We refer the reader to \cite{TZ12} for an excellent introduction.
	
	\subsection*{Acknowledgements} Both authors are in debt to Amador Martin-Pizarro for encouraging them to initiate this project.
	
	\section{The Canonical Base Property}\label{s:CBP}
	
	In this section we recall the definition of the canonical base property as well as some known, but useful, facts. 	
	In view of our interests, we shall restrict our attention to stable theories of finite Morley rank. So, we will tailor the definition and facts to finite Morley rank structures. We refer the reader to \cite[Section 3.0]{fW97} for a more general presentation and for further details.
	
	Fix a finite Morley rank saturated structure and work with imaginaries, so definable means indeed interpretable. Let $\Pp_0$ be a family of definable sets and denote by $\Pp$ the family of non modular strongly minimal definable sets.
		
	\begin{definition}
	A definable set $X$ is {\em $\Pp_0$-internal} if there are finitely many sets $X_1,\ldots,X_k\in \Pp_0$ and a definable surjection $f: \bigcup_{i=1}^k X_i^{n_i}\to X$. % Also, we say that it is almost $\Pp_0$-internal if there is a $1$-to-finite relation between $\bigcup_{i=1}^k X_i^{n_i}$ and $X$.% We say that $X$ is {\em $\Pp$-analysable} if for every $a\in X$ there exists finitely many $a_0,\ldots,a_n\in \dcl(a,A)$ such that $a\in \dcl(A,a_0,\ldots,a_n)$ and $\tp(a_i/A,a_0,\ldots,a_{i-1})$ is almost $\Pp$-internal.
	\end{definition}
	
	The above definition is equivalent to saying that $X\subset \dcl(A,X_1,\ldots,X_n)$ for some set of parameters $A$ containing the parameters of $X_1,\ldots,X_n\in \Pp_0$. Also, this definition extends to partial types as follows: a partial type $\pi(x)$ is $\Pp_0$-internal if it implies a definable set which is $\Pp_0$-internal. This agrees with the usual definition of (almost) internality: 
	
	\begin{definition}
	A partial type $\pi(x)$ over $A$ is (almost) $\Pp_0$-internal if there is a set $B\supset A$ such that any realisation of $\pi$ is definable (algebraic) over $B$ and finitely many realisations of definable sets over $B$ from $\Pp_0$. We also say that $\pi(x)$ over $A$ is {\em $\Pp_0$-analysable}  if for every realisation $a$ of $\pi$ there are $a_1,\ldots,a_n\in \dcl(A,a)$ such that $\tp(a_{i+1}/A,a_1,\ldots,a_i)$ is $\Pp_0$-internal for $1\le i<n$ and $a\in \dcl(A,a_1,\ldots,a_n)$. 
	\end{definition}
Note that almost internality is a generalisation of almost strong minimality. Indeed, a partial type is almost strongly minimal if it is almost internal to a strongly minimal set. 

In our context, every type is analysable to the family of strongly minimal sets. 
%	Since non-orthogonality yields an equivalence relation on strongly minimal sets, we may assume that in $\Pp$ there is a single strongly minimal set from each class. In particular, when the ambient theory is uncountably categorical, the family $\Pp$ will have a single set. In general, we have the following fact will allow us to detect the relevant strongly minimal definable sets.
%	
The following fact is probably folklore, but we could not find a reference. So, we include a proof for the sake of completeness. 
	\begin{fact}\label{F:Internal}
		Let $\Pp_0$ be a family of definable sets and let $\Pp_1$ be a family of strongly minimal sets. Assume that a definable set $X$ is $\Pp_0$-analysable and almost $\Pp_1$-internal. Then it is almost $\Pp_0$-internal.
	\end{fact}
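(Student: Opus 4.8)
The plan is to pass to the generic type of $X$ and to reduce the whole statement to showing that the strongly minimal sets witnessing almost $\Pp_1$-internality are individually almost $\Pp_0$-internal. I would fix a base $A$ containing the parameters of $X$ and of a $\Pp_0$-analysis, let $a$ realise the generic type of $X$ over $A$, and, using the equivalence between the type- and definable-set formulations of almost internality recalled above, argue that it suffices to prove $\tp(a/A)$ almost $\Pp_0$-internal. Almost $\Pp_1$-internality gives $B\supseteq A$ and a tuple $\bar e=(e_1,\dots,e_m)$ of elements of strongly minimal sets $E_1,\dots,E_m\in\Pp_1$ (with parameters in $B$) such that $a\in\acl(B\bar e)$; I would take $\bar e$ of minimal length. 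The statement then reduces to proving that each $\tp(e_j/B)$ is almost $\Pp_0$-internal: once this is known, enlarging $B$ to a common witness $B^\ast$ makes $\tp(\bar e/B^\ast)$ almost $\Pp_0$-internal, say $\bar e\in\acl(B^\ast,\bar c)$ with $\bar c$ realising $\Pp_0$-sets over $B^\ast$, and then $a\in\acl(B\bar e)\subseteq\acl(B^\ast,\bar c)$ witnesses that $\tp(a/A)$ is almost $\Pp_0$-internal.

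For the core claim I would use orthogonality. By minimality of $\bar e$ we have $a\notin\acl(B\bar e_{\neq j})$, so $e_j\nind_{B\bar e_{\neq j}}a$ and hence $E_j$ is non-orthogonal to $X$. Two standard facts of the finite Morley rank setting then finish the argument. First, non-orthogonality descends through an analysis: as $X$ is $\Pp_0$-analysable with successive layers $\tp(a_{i}/A\,a_1\dots a_{i-1})$, a type non-orthogonal to the generic type of $X$ must be non-orthogonal to one of these layers, and since each layer is $\Pp_0$-internal, $E_j$ is non-orthogonal to some $\Pp_0$-internal type $r$. Second, a strongly minimal set non-orthogonal to a $\Pp_0$-internal type is almost $\Pp_0$-internal: non-orthogonality provides, over a suitable base $C$, a generic $e\in E_j$ over $C$ and a realisation $d$ of $r$ with $e\nind_C d$, whence strong minimality forces $e\in\acl(Cd)$; writing $d\in\acl(D,\bar c)$ with $\bar c$ from $\Pp_0$-sets over some $D\supseteq C$ then gives $e\in\acl(D,\bar c)$, so $E_j$ is almost $\Pp_0$-internal.

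The hard part will be the management of base sets across these reductions: I must ensure that the non-orthogonality witnesses for $E_j$ and the $\Pp_0$-internality witnesses of the analysis layers can be realised over a common base, and that enlarging bases does not destroy the independence used to read off non-orthogonality. All of this rests on the invariance of (almost) internality, of analysability, and of non-orthogonality under enlarging the base, together with the extension axiom. Conceptually, the decisive step is the descent of non-orthogonality through the $\Pp_0$-analysis, since this is precisely where the length of the analysis is absorbed and the strongly minimal hypothesis on $\Pp_1$ is converted into $\Pp_0$-internality; the two cited orthogonality facts I would either quote from Wagner's treatment or establish by the short Morley rank computations indicated above.
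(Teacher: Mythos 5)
Your argument is correct, and its skeleton coincides with the paper's: extract from almost $\Pp_1$-internality a base $B\supseteq A$ and a tuple of rank-one witnesses over $B$ with $a$ algebraic over them, then reduce everything to showing that these witnesses are themselves almost $\Pp_0$-internal. Where you diverge is in how that core claim is proved. The paper avoids orthogonality calculus altogether: it refines the witness tuple $\bar c$ so that $\acl(B,a)=\acl(B,\bar c)$, whence each $c_i\in\acl(B,a)$ inherits $\Pp_0$-analysability from $\tp(a/B)$ (a nonforking extension of $\tp(a/A)$, as $B$ is chosen independent from $a$ over $A$), and then invokes only the elementary fact that a rank-one $\Pp_0$-analysable type is almost $\Pp_0$-internal. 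You instead route through non-orthogonality: minimality of $\bar e$ gives $e_j\nind_{B\bar e_{\neq j}}a$, non-orthogonality descends through the $\Pp_0$-analysis to some internal layer, and a minimal type non-orthogonal to an internal type is almost internal. Both of the facts you quote are standard for minimal (hence regular) types in the finite Morley rank setting, and your base-management concerns are indeed handled by the usual invariance properties, so the proof goes through; what your approach buys is a conceptual picture (the strongly minimal witnesses must be non-orthogonal to some layer of the analysis), at the cost of heavier machinery. Note, though, that your own forking computation already hands you the paper's shortcut: since $e_j$ lies in a strongly minimal set and $e_j\notin\acl(B\bar e_{\neq j})$ by minimality, the relation $e_j\nind_{B\bar e_{\neq j}}a$ forces $e_j\in\acl(B\bar e_{\neq j},a)$, so $\tp(e_j/B\bar e_{\neq j})$ is directly $\Pp_0$-analysable of rank one and the entire non-orthogonality descent can be skipped. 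One small correction: do not reduce to the generic type of $X$. The conclusion concerns all realisations (the paper's proof takes $a\in X$ arbitrary), and almost internality of the generic type alone says nothing about elements in lower-rank definable subsets of $X$; fortunately your argument never actually uses genericity, so it applies verbatim to an arbitrary $a\in X$.
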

\begin{proof}
Let $X$ be definable over $A$ and let $a\in X$ be arbitrary. Since $\tp(a/A)$ is almost $\Pp_1$-internal, there exists some superset $B\supset A$ which is independent from $a$ over $A$ and some finite tuple $\bar c= (c_1,\ldots,c_k)$ such that $a\in \acl(B,\bar c)$ and each type $\tp(c_i/B)$ is strongly minimal. By refining the choice of $\bar c$ and $B$ if necessary, we may assume that $\bar c$ is an independent tuple over $B$ and that $\acl(B,a) = \acl(B,\bar c)$. It then follows that each type $\tp(c_i/B)$ is  $\Pp_0$-analysable, and so almost $\Pp_0$-internal since they have rank $1$. Hence, the type $\tp(a/A)$ is almost $\Pp_0$-internal, as desired.
\end{proof}

Concerning definable groups, the fundamental result in this setting is the following theorem due to Hrushovski. 

\begin{fact}
	Let $G$ be a definable group of finite Morley rank. Then $G$ is $\Pp_0$-analysable if and only if there is a descending sequence \[ G=G_0 \unrhd G_1 \unrhd \dots \unrhd G_k =\{1_G\}
	\]
	of definable normal subgroups such that each quotient $G_i/G_{i+1}$ is $\Pp_0$-internal.
\end{fact}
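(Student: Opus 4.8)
The plan is to treat the two implications separately: the existence of the chain readily gives analysability, while the converse is where the real work, namely Hrushovski's theorem on internal quotients, sits. For the implication from the chain to analysability, I would fix parameters $A$ defining $G$, all the $G_i$ and the members of $\Pp_0$, take an arbitrary $g\in G$, and set $a_i=gG_i\in G/G_i$, noting $a_i\in\dcl(A,g)$, $a_0\in\dcl(A)$ and $a_k=g$. For $0\le i<k$ the fibre of $G/G_{i+1}\to G/G_i$ over $a_i$ is a coset $\tilde g\,(G_i/G_{i+1})$ of the $\Pp_0$-internal group $G_i/G_{i+1}$, where $\tilde g$ represents $a_i$; adding $\tilde g$ to the base, every element of that coset, and in particular $a_{i+1}$, lies in the definable closure of $A\cup\{a_1,\dots,a_i,\tilde g\}$ together with finitely many realisations of members of $\Pp_0$. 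This shows $\tp(a_{i+1}/A,a_1,\dots,a_i)$ is $\Pp_0$-internal, and since $g=a_k\in\dcl(A,a_1,\dots,a_k)$, the tuple $(a_1,\dots,a_k)$ analyses $\tp(g/A)$; as $g$ is arbitrary, $G$ is $\Pp_0$-analysable.

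For the converse I would build the chain from the top by repeatedly extracting a largest internal quotient. The first thing to record is that the family of definable subgroups $N\trianglelefteq G$ with $G/N$ $\Pp_0$-internal is closed under finite intersections, since for two such $N,N'$ the diagonal map definably embeds $G/(N\cap N')$ into the $\Pp_0$-internal group $G/N\times G/N'$, and (via the $\dcl$-characterisation of internality) a definable subset of a $\Pp_0$-internal set is again $\Pp_0$-internal. The descending chain condition on definable subgroups, available as $G$ has finite Morley rank, then yields a smallest definable $G_1\trianglelefteq G$ with $G/G_1$ internal; rerunning the argument inside $G_1$ (for the intersection-closed, nonempty family of definable $N\trianglelefteq G$ with $N\subseteq G_1$ and $G_1/N$ internal) produces $G_2$, and so on, giving a descending chain $G=G_0\supseteq G_1\supseteq\cdots$ of $G$-normal definable subgroups with $\Pp_0$-internal factors. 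By the descending chain condition this chain is finite, so the converse reduces to showing it cannot stabilise above $\{1_G\}$: I must prove that every nontrivial definable $H\trianglelefteq G$, whose elements have $\Pp_0$-analysable type (as it is a definable subgroup of the analysable $G$), admits a proper definable $N\trianglelefteq G$ with $H/N$ $\Pp_0$-internal.

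To produce this nontrivial internal quotient I would first reduce to $H$ connected, the non-connected case being handled by the characteristic, hence $G$-normal, subgroup $H^0$ whose finite quotient is $\Pp_0$-internal. Taking a generic $h\in H$ over $A$ with $h\notin\acl(A)$, the first non-algebraic level of an analysis of $\tp(h/A)$ supplies a non-algebraic $c\in\dcl(A,h)$ with $\tp(c/A)$ $\Pp_0$-internal, so that the generic type of $H$ is not foreign to $\Pp_0$. The main obstacle is precisely the next step, which is Hrushovski's theorem: a connected group of finite Morley rank whose generic type is not foreign to $\Pp_0$ has a nontrivial definable normal subgroup with $\Pp_0$-internal quotient, obtained by forming the stabiliser of the type carrying the internal reduction $c$. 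Granting this, I would finish by replacing the resulting $N\trianglelefteq H$ with its $G$-normal core $\bigcap_{g\in G}gNg\inv$, a finite and hence definable intersection by the descending chain condition, which remains proper in $H$ and still has $\Pp_0$-internal quotient because $H$ modulo it embeds into a finite product of the conjugate internal quotients $H/gNg\inv$; this provides the required $N\trianglelefteq G$ and closes the induction.
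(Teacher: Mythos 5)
Your proposal is correct, but note that the paper itself gives no proof of this Fact: it is quoted as a known theorem of Hrushovski, so there is no in-paper argument to compare against. What you have written is a sound reconstruction of the standard proof. The forward direction (chain $\Rightarrow$ analysable) via the cosets $a_i=gG_i$ and the fibre-of-projection argument matches the paper's definitions exactly; the one step you gloss --- that a coset of a $\Pp_0$-internal group is $\Pp_0$-internal over the base enlarged by a representative $\tilde g$, and hence $\tp(a_{i+1}/A,a_1,\dots,a_i)$ is internal --- is fine, since the definition of internality permits enlarging the parameter set. In the converse, your bookkeeping is all in order: closure of $\{N\trianglelefteq G : G/N \text{ $\Pp_0$-internal}\}$ under finite intersections via the diagonal embedding, the DCC on definable subgroups of a group of finite Morley rank to extract smallest members and to make the normal core $\bigcap_g gNg\inv$ a finite (hence definable) intersection, and the reduction to $H$ connected via $H^\circ$ (which also silently covers the case $H$ finite, where $N=\{1\}$ already works since finite sets are internal). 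Two small steps you pass over quickly do hold but deserve a line in a full write-up: that the first non-algebraic level of an analysis yields $c\in\dcl(A,h)\setminus\acl(A)$ with $\tp(c/A)$ internal requires absorbing the earlier, $A$-algebraic levels into the base (finitely many conjugate internal types, so internality over $A$ survives), and that conjugation transports internality of $H/N$ to each $H/gNg\inv$. The genuinely hard content is, as you flag, Hrushovski's theorem that a connected group whose generic is not foreign to $\Pp_0$ has a nontrivial definable normal subgroup with $\Pp_0$-internal quotient (see e.g.\ Wagner, \emph{Stable groups}, Section 3.4); using it as a black box is entirely consistent with the paper's own treatment, since that theorem is precisely the kernel of what the paper is citing, and your contribution is the iteration and normalisation around it.
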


We now recall the definition of the canonical base property: 
	\begin{definition}
		A theory of finite Morley rank has the {\em canonical base property} (CBP, in short) if whenever $a$ and $b$ are finite tuples in a saturated model we have that $\tp(\cb(\stp(a/b))/a)$ is almost $\Pp$-internal.
	\end{definition}

Chatzidakis proved in \cite[Proposition 1.10]{zC12} (\cf \cite[Corollary 5.2]{PW13}) that in fact the type $\tp(\cb(\stp(a/b))/a)$ is always $\Pp$-analysable. Using this, Kowalski and Pillay \cite[Section 4]{KP06} proved that in general, if $G$ is a definable connected group of finite Morley rank, then $G/Z(G)$ is $\Pp$-analysable. Furthermore, under the assumption of the CBP, they observed that $G/Z(G)$ is almost $\Pp$-internal. An inspection of their proof yields the following useful result for non-associative rings. By a (possibly) {\em non-associative ring} we mean an algebraic structure which satisfies all axioms of rings except possibly the associativity property for multiplication. That is, it is an abelian group together with a multiplication operation that distributes over addition. In this context, given a subset $A$ of a non-associative ring $R$, we denote by $\ann_R(A)$ the two-sided annihilator of $A$ in $R$, that is,
\[
\ann_R(A) = \left\{ r \in R \ | \ ra= 0 =ar \text{ for every $a\in A$} \right\}.
\]
The annihilator of $R$ is then $\ann(R):=\ann_R(R)$.

	\begin{prop}\label{P:Ann}
	Let $R$ be a non-associative ring of finite Morley rank definable in a theory with the CBP.  Suppose that $R$ is connected as an additive group. The definable additive group $R/\ann(R)$ is almost $\Pp$-internal.
	\end{prop}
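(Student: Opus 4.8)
The plan is to transcribe the Kowalski--Pillay argument for $G/Z(G)$ into the multiplicative setting, with $\ann(R)$ playing the role of the centre. For $a\in R$ let $\lambda_a,\rho_a\colon R\to R$ denote left and right multiplication, $\lambda_a(x)=ax$ and $\rho_a(x)=xa$; these are definable endomorphisms of the connected additive group $(R,+)$, and $a\mapsto(\lambda_a,\rho_a)$ is additive with kernel exactly $\ann(R)$, since $\lambda_a=\lambda_{a'}$ and $\rho_a=\rho_{a'}$ means $(a-a')x=0=x(a-a')$ for all $x$. Thus $R/\ann(R)$ is identified, coset by coset, with the family of pairs $(\lambda_a,\rho_a)$, exactly as $G/Z(G)$ is identified with the inner automorphisms. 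Writing $\bar a:=a+\ann(R)$, the basic algebraic observation underlying everything is: the definable subgroup $\Gamma_a:=\{(x,ax,xa):x\in R\}\le R^3$ depends only on $\bar a$, and $\Gamma_a=\Gamma_{a'}$ iff $\bar a=\bar{a'}$; hence the canonical parameter of $\Gamma_a$ is interdefinable with $\bar a$. Note also that $R/\ann(R)$ is connected, being a definable quotient of the connected group $(R,+)$.

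Next I would feed a generic point into this graph. Fix a generic $\bar a$ of $R/\ann(R)$ and a generic $m\in R$ with $m\ind\bar a$, and set $\gamma=(m,am,ma)$, a generic point of $\Gamma_a$. Since $R$ is connected, $\Gamma_a$ is connected, so the generic type of $\Gamma_a$ is stationary and its canonical base is interdefinable with the canonical parameter of $\Gamma_a$, i.e. with $\bar a$; in other words $\cb(\stp(\gamma/a))$ is interdefinable with $\bar a$. Applying the CBP to the pair $\gamma$ and $a$ then yields that $\tp(\cb(\stp(\gamma/a))/\gamma)$, that is $\tp(\bar a/\gamma)$, is almost $\Pp$-internal. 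This is the only place the CBP enters, precisely as in Kowalski--Pillay. Independently, Chatzidakis' result \cite[Proposition 1.10]{zC12} gives that $\tp(\bar a/\gamma)$ is always $\Pp$-analysable; together with Hrushovski's characterisation of $\Pp$-analysable groups recalled above this already shows that the connected group $R/\ann(R)$ is $\Pp$-analysable, the half of the statement that does not require the CBP.

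Finally I would upgrade analysability to almost internality for the whole definable group, and I expect this to be the main obstacle. The delicate point is that the base $\gamma=(m,am,ma)$ produced by the CBP is \emph{not} independent from $\bar a$: the entries $am=\lambda_a(m)$ and $ma=\rho_a(m)$ lie in $\dcl(\bar a,m)$, so one cannot simply read off a witnessing base independent from $\bar a$. This transition---from almost $\Pp$-internality of the generic type over the multiplication data to almost $\Pp$-internality of the definable group $R/\ann(R)$ over an independent base---is exactly the step carried out in \cite[Section 4]{KP06} for $G/Z(G)$, and I would reproduce it verbatim: one uses the connectedness of $R/\ann(R)$ (every element is a difference of two independent generics, so it suffices to treat the generic type), the fact that $\bar a$ is interalgebraic with the canonical base of $\stp(\gamma/\bar a)$ and so lies in the algebraic closure of a Morley sequence $(\gamma_i)_i$ in that type, and the almost $\Pp$-internality of each $\tp(\bar a/\gamma_i)$, combining these as there. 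The only thing to verify beyond the group case is the purely algebraic dictionary of the first paragraph, namely that $\ann(R)$ is the kernel of $a\mapsto(\lambda_a,\rho_a)$; the model-theoretic skeleton is identical.
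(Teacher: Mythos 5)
Your first two paragraphs reproduce the paper's argument: the paper works with exactly the same subgroups $H_a=\{(x,ax,xa)\ |\ x\in R\}\le R^3$, notes they are connected because they are in definable bijection with the connected group $R$, and identifies the canonical parameter of $H_a$ with $\bar a=a+\ann(R)$. The difference is how the CBP enters. The paper does not apply the CBP directly to $\stp(\gamma/\bar a)$; it cites \cite[Lemma 2.6]{aP02}, which asserts that the canonical parameter of a connected definable subgroup of a power of an $\emptyset$-definable group is almost $\Pp$-internal \emph{over $\emptyset$}, i.e.\ with a witnessing set independent from $\bar a$. With that statement, the paper's one-line conclusion (the generic type of the connected group $R/\ann(R)$ is almost $\Pp$-internal, hence so is the group, every element being a sum of two generics) is immediate. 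Your raw application of the CBP only yields almost internality of $\tp(\bar a/\gamma)$ over the dependent base $\gamma$, and you are right that this is the main obstacle; but that obstacle is precisely what the cited lemma, not any further argument in the paper, resolves.

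The gap is in your proposed bridge. The combination you sketch --- $\bar a\in\acl(\gamma_1,\ldots,\gamma_n)$ for a Morley sequence $(\gamma_i)_i$ in $\stp(\gamma/\bar a)$ together with almost $\Pp$-internality of each $\tp(\bar a/\gamma_i)$ --- cannot deliver the conclusion. Almost $\Pp$-internality of $\stp(\bar a)$ requires a witness $B$ with $B\ind\bar a$ and $\bar a\in\acl(B,\Pp)$; but if $\bar a\notin\acl(\emptyset)$, any tuple $\bar\gamma$ with $\bar a\in\acl(\bar\gamma)$ is automatically dependent with $\bar a$, so bases built from a sequence that algebraizes $\bar a$ can never supply the required independence. (Note also that the two listed ingredients hold trivially for tuples $\gamma_i$ interalgebraic with $\bar a$, with no internality conclusion, so the inference pattern itself is invalid.) What actually closes the gap --- and this is the content of the proof of \cite[Lemma 2.6]{aP02}, on which \cite{KP06} relies --- is a translation trick using the ambient group $R^3$: take $\delta$ generic in $R^3$ over everything and apply the CBP to the generic type of the coset $\delta+H_a$ over its canonical parameter. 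A generic point $a'$ of this coset is generic in $R^3$, hence $a'\ind\bar a$, while the code of the coset still recovers $\bar a$, since $H_a$ is its difference set. The CBP then gives that $\tp(\bar a/a')$ is almost $\Pp$-internal with $a'$ independent from $\bar a$, whence $\stp(\bar a)$ is almost $\Pp$-internal, and your concluding reduction via sums of generics finishes the proof. So your diagnosis of where the difficulty lies is correct, but the mechanism you propose to overcome it is not the one that works.
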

	\begin{proof}
	Fix an arbitrary generic element $a\in R$ and consider the definable maps $x\mapsto ax$ and $x\to xa$, which are  group homo\-morphisms by the distributive law. Set 
	\[
	H_a=\left\{ (x,ax,xa) \ | \ x\in R \right\}
	\]
	and note that it is a subgroup of $R\times R\times R$  which is in definable bijection with $R$. So, it is connected. Hence, its canonical parameter is almost $\Pp$-internal by \cite[Lemma 2.6]{aP02}. On the other hand, distributivity yields that the canonical parameter of $H_a$ is interdefinable with $a+\ann(R)$. So, as $a+\ann(R)$ is a generic element of $R/\ann(R)$, it follows that  $R/\ann(R)$ is almost $\Pp$-internal.	
	\end{proof}
	 
	 \section{The basic construction: triangular rings}\label{s:Triangular}
	 In this section we present a very general construction in ring theory which will encompass all examples appearing in the paper. For this purpose, it will be convenient to work with the following concept of module over an abelian group, see \cite[Section A.1.1]{BN94}.
	 \begin{definition}
	 	Let $R$ be an abelian group. We say that $M$ is a {\em left $R$-module} if it is an additive group $M$ together with an operation $R\times M\to M$ that distributes over addition of $M$, {\it i.e.} for all $r,r'\in R$ and $m,m'\in M$ the following holds:
	 	\begin{enumerate}[(i)]
	 		\item $(r+r')\cdot m = r\cdot m + r'\cdot m $ and  \item $r\cdot (m+m') = r\cdot m + r\cdot m'$.
	 	\end{enumerate}
	 	Likewise, we define the concept of {\em right $R$-module} with right multiplication. 
	 \end{definition} 
	 
	 Notice that a (left) $R$-module is nothing else than an abelian group $M$ together with a group homomorphism $R \to \mathrm{End}_\Z(M)$ of abelian groups, or equivalently with a (left) bilinear map  $R\times M\to M$. Along the paper, we will use this notion for non-associative rings $R$ and $M$. 
	 
	 Next, we introduce the main ring theoretic construction, which is a generalisation of triangular rings as presented in \cite[Example 1.14]{tL01}. We adapt this construction to our more general setting.
	 \begin{definition}
	 	Let $R$ and $M$ be two non-associative rings and suppose that $M$ is a left and right $R$-module. The {\em (general) triangular ring} associated to $R$ and $M$, denoted by $\Lambda(R,M)$, is the set $R\times M$ equipped with addition coordinatewise and multiplication
	 	\[
	 	(r_1,m_1) \cdot (r_2,m_2) = (r_1r_2 , r_1\cdot m_2 + m_1\cdot r_2 + m_1m_2).
	 	\]
	 \end{definition} 
	 It is routine to verify that $\Lambda(R,M)$ with these operations becomes a non-associative ring. As pointed out above, the terminology comes from the following example in ring theory.
	 
	 \begin{example}
	 	Let $R$ and $M$ be two non-associative rings and suppose that $M$ is a left and right $R$-module. Assume further that the ring $M$ has trivial multiplication, {\it i.e.} it is simply a left and right $R$-module. In that case, the triangular ring $\Lambda(R,M)$ corresponds to the non-associative ring
	 	\[
	  \left\{ 
	 	\begin{pmatrix} 
	 	r & m \\ 0 & r 
	 	\end{pmatrix}
	 	\ \Big| \ r\in R \text{ and } m\in M
	 	\right\}
	 	\]
	 	with the usual matrix operations. 
	 \end{example}
	 
	 The following result captures one of the main relevant features (for our purposes) of triangular rings: certain additive maps from $R$ to the annihilator of $M$ lift to automorphisms of the triangular ring. This will play an essential role throughout the paper. 
	 
	 \begin{lemma}\label{L:Aut}
	 	Let $\Lambda$ be the triangular ring corresponding to two non-associative rings $R$ and $M$.  Assume that there exists an  additive map $\delta:R\to \ann(M)$ satisfying for every $r,r' \in R$ that 
	 	\[
	 	\delta (r r') = r\cdot \delta(r') + \delta(r)\cdot r'.
	 	\] Then, the map
	 	\[
	 	\sigma: \Lambda \to \Lambda, \ a=(r,m) \mapsto  a+(0,\delta(r)) 
	 	\]
	 	is an automorphism of the ring $\Lambda$ which fixes pointwise $A_0\times M$ whenever $\delta$ vanishes on $A_0\subset R$.  Furthermore, we also have that $\sigma(1_\Lambda)=1_\Lambda$ whenever $R$ is unital.
	 \end{lemma}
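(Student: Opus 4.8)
The plan is to check the three defining properties of a ring automorphism in turn --- additivity, bijectivity and multiplicativity --- and then to read off the two supplementary assertions. Since $\delta$ is additive, $\sigma$ is clearly an additive map, and it is bijective because the additive map $\tau\colon(r,m)\mapsto(r,m-\delta(r))$ is a two-sided inverse for it (here one uses additivity of $\delta$ once more). Thus the entire content of the statement sits in the multiplicative identity $\sigma(a\cdot b)=\sigma(a)\cdot\sigma(b)$, and the two hypotheses imposed on $\delta$ are tailored precisely to force it.

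For the multiplicative step I would take $a=(r_1,m_1)$ and $b=(r_2,m_2)$ and compute both sides explicitly. Applying $\sigma$ after multiplying gives
\[
\sigma(a\cdot b)=\bigl(r_1r_2,\ r_1\cdot m_2+m_1\cdot r_2+m_1m_2+\delta(r_1r_2)\bigr),
\]
whereas expanding $\sigma(a)\cdot\sigma(b)=(r_1,m_1+\delta(r_1))\cdot(r_2,m_2+\delta(r_2))$ according to the multiplication of $\Lambda$ produces, in the second coordinate,
\[
r_1\cdot m_2+r_1\cdot\delta(r_2)+m_1\cdot r_2+\delta(r_1)\cdot r_2+(m_1+\delta(r_1))(m_2+\delta(r_2)).
\]
Here is where the annihilator hypothesis enters: since $\delta(r_1),\delta(r_2)\in\ann(M)$, every product in the expansion of the last summand that involves a factor $\delta(r_i)$ vanishes, so $(m_1+\delta(r_1))(m_2+\delta(r_2))=m_1m_2$. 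After this cancellation the second coordinate of $\sigma(a)\cdot\sigma(b)$ reduces to
\[
r_1\cdot m_2+m_1\cdot r_2+m_1m_2+\bigl(r_1\cdot\delta(r_2)+\delta(r_1)\cdot r_2\bigr),
\]
and comparing with $\sigma(a\cdot b)$ the two coincide exactly when $\delta(r_1r_2)=r_1\cdot\delta(r_2)+\delta(r_1)\cdot r_2$, which is the assumed identity on $\delta$. This establishes that $\sigma$ is a ring automorphism.

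The two remaining assertions follow with no further work. If $\delta$ vanishes on $A_0\subset R$, then $\sigma(r,m)=(r,m+\delta(r))=(r,m)$ for every $r\in A_0$ and $m\in M$, so $\sigma$ fixes $A_0\times M$ pointwise. When $R$ is unital, $\Lambda$ is unital with $1_\Lambda=(1_R,0)$; since any bijective ring homomorphism must fix the unit, $\sigma(1_\Lambda)=1_\Lambda$. Concretely, $\sigma(1_\Lambda)=(1_R,\delta(1_R))$ and evaluating the derivation identity at $r=r'=1_R$ gives $\delta(1_R)=\delta(1_R)+\delta(1_R)$, so that $\delta(1_R)=0$.

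The one genuinely delicate point of the argument is the multiplicative step, and inside it the cancellation of the three cross-terms. I would therefore double-check carefully that $\ann(M)$ is the two-sided annihilator of $M$ inside $M$, so that $m_1\delta(r_2)$, $\delta(r_1)m_2$ and $\delta(r_1)\delta(r_2)$ all vanish; were even one of these to survive, the matching with $\delta(r_1r_2)$ would break down and $\sigma$ would fail to be multiplicative.
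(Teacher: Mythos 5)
Your proof is correct and follows essentially the same route as the paper's: additivity and bijectivity are immediate, and multiplicativity is obtained by expanding $\sigma(a)\cdot\sigma(b)$, killing all cross-terms $m_1\delta(r_2)$, $\delta(r_1)m_2$, $\delta(r_1)\delta(r_2)$ via $\delta(R)\subset\ann(M)$, and matching the surviving module terms $r_1\cdot\delta(r_2)+\delta(r_1)\cdot r_2$ with $\delta(r_1r_2)$ by the assumed identity. You are in fact slightly more thorough than the paper, which leaves the inverse map, the fixing of $A_0\times M$, and the unit claim to the reader; your general observation that a surjective ring homomorphism sends an identity to an identity settles the last point cleanly (the concrete computation $\delta(1_R)=0$ additionally presumes the module action of $1_R$ is the identity, so the general argument is the safer one).
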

	 \begin{proof}
	 	It is easy to check that $\sigma$ is an additive bijection. We prove that it is multiplicative. Note first that for $a=(r,m)$ and $a'=(r',m')$ in $\Lambda$ we have 
	 	\[
	 	m\cdot \delta(r') + \delta(r)\cdot m' + \delta(r) \delta(r') = 0,
	 	\]
	 	since $\delta(R)\subset \ann(M)$. So, we have
	 	\begin{align*}
	 	\sigma(a)\cdot \sigma(a') & = (a+(0,\delta(r))\cdot (a'+(0,\delta(r')) \\& = a\cdot a' + (0,r\cdot \delta(r') + \delta(r)\cdot r')  \\  
	 	& = a\cdot a' + (0,\delta(r \cdot r')),
	 	\end{align*}
	 	where the third equality holds by assumption.	
	 \end{proof}
	 
	 \begin{remark}
	 	In general, in the statement above, the non-associative ring $R$ cannot have trivial multiplication for a meaningful additive map $\delta:R\to \ann(M)$ to exist. On the contrary,  {\it a priori} there is no restriction on $M$. In fact, if $M$ has trivial multiplication, then $\ann(M) =M$ and consequently $\delta$ takes values on $M$.
	 \end{remark}

	 The other main ingredient is that the triangular ring corresponding to $R$ and $M$ has a suitable analysis. Before stating the result, we define the two-sided {\em annihilator} of $M$ in $R$ as
	 \[
	 \ann_R(M) = \left\{ r\in R \ | \ r\cdot M =\{0\} = M\cdot r \right\}.
	 \]
	 Likewise, define the two-sided annihilator of $R$ in $M$ as
	 \[
	 \ann_M(R) = \left\{ m\in M \ | \ R\cdot m = \{0\} = m\cdot R \right\}.
	 \]
	 Both are clearly subgroups of $R$ and $M$ respectively. Also, notice that for $M=R$ the above is the annihilator of $R$. 
	 \begin{lemma}\label{L:Analysis}
	 	Let $R$ and $M$ be two non-associative rings and let $\Lambda$ be the triangular ring corresponding to them. Suppose that $\Lambda$ has finite Morley rank and assume further that $\ann(R) = \ann_R(M) = \ann_{R}(\ann(M))$. Then:
	 	\begin{enumerate}
	 		\item The annihilator of $\Lambda$ is $\ann(R)\times \big(\ann(M)\cap \ann_M(R)\big)$.
	 		\item The group $\Lambda/\ann(\Lambda)$ is analysable to a definable subgroup of $\{0\}\times  M$. 
	 		\item If the ring theory of $\Lambda$ satisfies the CBP, then $\Lambda/\ann(\Lambda^\circ)$ is almost internal to a definable subgroup of $\{0\}\times M$.
	 	\end{enumerate}
	 	
	 \end{lemma}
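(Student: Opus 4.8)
My plan is to handle the three items in order: (1) is a direct computation in $\Lambda$, (2) is a group-theoretic analysis built from the module action, and (3) combines these with the CBP input of Proposition~\ref{P:Ann}. For (1) I would solve the defining equations of $\ann(\Lambda)$ directly. Imposing $(r,m)\cdot(r',m')=0=(r',m')\cdot(r,m)$ for all $(r',m')$, the first coordinate immediately forces $r\in\ann(R)$, while specialising $m'=0$ and then $r'=0$ in the second coordinate yields $m\cdot R=R\cdot m=0$ (so $m\in\ann_M(R)$) together with the relations $r\cdot m'=-mm'$ and $m'\cdot r=-m'm$ for all $m'\in M$. The inclusion $\supseteq$ follows at once from $\ann(R)=\ann_R(M)$. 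For $\subseteq$ the crucial point is that any $n\in\ann(M)$ satisfies $mn=nm=0$, whence $r\cdot n=n\cdot r=0$; thus $r\in\ann_R(\ann(M))$, which by hypothesis equals $\ann_R(M)$, so $r\cdot M=M\cdot r=0$. Feeding this back into the relations above gives $mM=Mm=0$, i.e.\ $m\in\ann(M)$. This is exactly where the triple equality of the hypothesis is indispensable.

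For (2) I would work with the exact sequence $0\to\{0\}\times M\to\Lambda\xrightarrow{\pi}R\to0$ of additive groups, $\pi$ being the definable ring projection. Writing $\bar G:=\Lambda/\ann(\Lambda)$ and letting $\bar K$ be the image of $\{0\}\times M$, item (1) gives $\bar K\cong M/(\ann(M)\cap\ann_M(R))$ and $\bar G/\bar K\cong R/\ann(R)$. Here $\bar K$ is a definable quotient of $M$, hence internal to $\{0\}\times M$. For the top quotient, the hypothesis $\ann(R)=\ann_R(M)$ makes the module action faithful: for $m\in M$ the definable additive map $r\mapsto(r\cdot m,\,m\cdot r)$ has kernel $A_m=\ann_R(\{m\})$, and $\bigcap_{m\in M}A_m=\ann_R(M)$. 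By the descending chain condition there are $m_1,\dots,m_l$ with $A_{m_1}\cap\cdots\cap A_{m_l}=\ann_R(M)$, and along the chain $R\supseteq A_{m_1}\supseteq A_{m_1}\cap A_{m_2}\supseteq\cdots\supseteq\ann_R(M)$ each successive quotient embeds definably into $M\times M$, hence is internal to $\{0\}\times M$. Since the ambient groups are abelian all these subgroups are normal, so concatenating this chain (lifted to sit above $\bar K$) with $\bar K\rhd\{0\}$ exhibits $\bar G$ as analysable to $\{0\}\times M$.

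For (3) I would first record that $\ann(\Lambda)\subseteq\ann(\Lambda^{\circ})$, so $\Lambda/\ann(\Lambda^{\circ})$ is a definable quotient of $\Lambda/\ann(\Lambda)$ and therefore, by (2), analysable to $\{0\}\times M$. On the other hand $\Lambda^{\circ}$ is a connected non-associative ring definable in a theory with the CBP, so Proposition~\ref{P:Ann} applies and $\Lambda^{\circ}/\ann(\Lambda^{\circ})$ is almost $\Pp$-internal; as $\Lambda^{\circ}$ has finite index in $\Lambda$, covering $\Lambda/\ann(\Lambda^{\circ})$ by finitely many translates of this subgroup shows it too is almost $\Pp$-internal. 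Finally I would invoke Fact~\ref{F:Internal} with $\Pp_0=\{\{0\}\times M\}$ and $\Pp_1=\Pp$, a family of strongly minimal sets: a definable set that is $\Pp_0$-analysable and almost $\Pp_1$-internal is almost $\Pp_0$-internal, which is exactly almost internality to $\{0\}\times M$.

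The main obstacle I anticipate is item (1): one must pin down $\ann(\Lambda)$ exactly and recognise that the two-sided equalities $\ann(R)=\ann_R(M)=\ann_R(\ann(M))$ are precisely what rules out ``mixed'' elements $(r,m)$ with $r\notin\ann(R)$ or $m\notin\ann(M)$ entering the annihilator. Once (1) is settled, (2) is routine group theory and (3) is a formal assembly of Proposition~\ref{P:Ann} and Fact~\ref{F:Internal}.
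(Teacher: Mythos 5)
Your item (1) is correct (a mild variant of the paper's computation: you route through $\ann_R(\ann(M))$, where the paper uses $\ann(R)=\ann_R(M)$ directly), and your item (3) assembles Proposition \ref{P:Ann}, the finite-index argument and Fact \ref{F:Internal} exactly as the paper does. The genuine gap is in item (2): you treat the projection $\pi\colon\Lambda\to R$, the subgroup $\{0\}\times M$, and the subgroups $A_m=\ann_R(\{m\})$ for \emph{arbitrary} $m\in M$ as definable in the ring $\Lambda$. None of this is justified. The lemma is about the pure ring structure of $\Lambda$ (this matters for the applications, where one contradicts internality using ring automorphisms), and in the ring language neither $R\times\{0\}$, nor $\{0\}\times M$, nor $\pi$ need be definable. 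The only definable maps at hand are the multiplications $\ell_a,\rho_a$ by fixed elements $a\in\Lambda$; but for $a=(0,m)$ with $m\notin\ann(M)$ one has $(r,m')\cdot(0,m)=\big(0,\,r\cdot m+m'm\big)$, and because of the cross term $m'm$ this map neither factors through the $R$-coordinate nor has kernel of the form $A_m\times M$. Consequently your descending chain $R\supseteq A_{m_1}\supseteq A_{m_1}\cap A_{m_2}\supseteq\cdots\supseteq\ann_R(M)$ is not a chain of definable subgroups, the descending chain condition for definable subgroups cannot be applied to produce the finite subintersection, and the successive quotients do not embed \emph{definably} into anything; so no analysis in the sense of the theory of $\Lambda$ is obtained.

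This is precisely the role of the hypothesis $\ann(R)=\ann_R(\ann(M))$, which your item (2) never invokes (you even assert that the triple equality is only indispensable in item (1) --- it is rather the other way around). The paper takes its test elements $m_1,\dots,m_k$ inside $\ann(M)$: then the cross terms vanish, $\ell_{(0,m_j)}$ and $\rho_{(0,m_j)}$ are definable endomorphisms of $\Lambda$ with images inside $\{0\}\times M$ and kernels $\ann_R(m_j)\times M$, and the chain condition applies to the definable subgroups $\ann_\Lambda((0,m_j))$, giving $\ann_\Lambda(\{0\}\times\ann(M))=\bigcap_j\ann_\Lambda((0,m_j))$. The price of restricting to $\ann(M)$ is that the kernel of the resulting definable map is $\ann_R(\ann(M))\times M$ rather than $\ann_R(M)\times M$, and the hypothesis $\ann_R(\ann(M))=\ann(R)$ is exactly what identifies it with $\ann(R)\times M$; a second step of the same kind (multiplication by finitely many $a_j$ with $\ann(\Lambda)=\bigcap_j\ann_\Lambda(a_j)$, using $\ann(R)=\ann_R(M)$ to keep the images inside $\{0\}\times M$) takes $\ann(R)\times M$ down to $\ann(\Lambda)$. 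Your argument becomes correct --- and in fact becomes the paper's proof --- once the test elements are restricted to $\ann(M)$ and this hypothesis is used to identify the kernels.
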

	 \begin{proof} {\em (1)} The inclusion $\supset$ is straightforward using that $\ann(R)  \subset \ann_R(M)$. For the other, consider an arbitrary element $(r,m)\in \ann(\Lambda)$. By considering the product of $(r,m)$ and any $(r',0)\in\Lambda$ we immediately see that $r\in \ann(R)$. So, by assumption we get that $r\in \ann_R(M)$ as well. It then follows for any $m'\in M$ that 
	 	\[
	 	(r,m)\cdot (0,m') =  (0,mm') \ \text{ and  } \ (0,m')\cdot (r,m) =  (0,m'm).
	 	\]
	 	Hence we deduce that $m\in \ann(M)$. Also, for any $r'\in R$ we see that 
	 	\[
	 	(r,m)\cdot (r',0) =  (0,m\cdot r') \ \text{ and  } \ (r',0)\cdot (r,m) =  (0,r'\cdot m).
	 	\]
	 	So,  we get $m\in \ann_M(R)$, as desired.
	 	
	 	\noindent {\em (2)} To ease notation, given an element $a\in \Lambda$ denote by $\ell_a:\Lambda\to \Lambda$ and $\rho_a:\Lambda\to \Lambda$ the additive homomorphisms defined by $\ell_a(x)= a\cdot x$ and $\rho_a(x) = x\cdot a$. 
	 	
	 	Let $m_1,\ldots,m_k\in \ann(M)$ be such that 
	 	$
	 	\ann_\Lambda(\{0\}\times \ann(M)) = \bigcap\nolimits_j \ann_{\Lambda}((0,m_j)).
	 	$ 
	 	For each $j$, set $b_j = (0,m_j)$ and consider the additive homomorphism 
	 	\[
	 	f_1 : \Lambda \to \prod\nolimits_{j=1}^{2k} \Lambda , \ x \mapsto \left( \ell_{b_1}(x),\rho_{b_1}(x),\ldots, \ell_{b_k}(x),\rho_{b_k}(x)\right).
	 	\] 
	 	We clearly have that $\mathrm{im}(f_1) \subset \prod_{j} \{0\}\times M$. Furthermore, the kernel of $f_1$ is  
	 	\[
	 	\Lambda_1:=\ann_\Lambda(\{0\}\times \ann(M)) = \ann_R(\ann(M)) \times M =\ann(R)\times M.
	 	\] 
	 	It follows that $\Lambda/\Lambda_1$ is definably isomorphic to the additive subgroup $\mathrm{im}(f_1) $ and hence it is internal to the definable subgroup $\sum_j \mathrm{im}(\ell_{b_j}) + \sum_j \mathrm{im}(\rho_{b_j})$ of  $\{0\}\times M$.
	 	
	 	Now, we prove that $\Lambda_1/\ann(\Lambda)$ is internal to a definable subgroup of  $\{0\}\times M$. Let $a_1,\ldots,a_s\in \Lambda$ be such that $\ann (\Lambda) = \bigcap_j \ann_{\Lambda}(a_j)$. We know by (1) that 
	 	\[
	 	\ann (\Lambda) = \ann(R)\times \big(\ann(M)\cap \ann_M(R)\big)\subset \Lambda_1.
	 	\]
	 	Similar as above, consider the additive homomorphism 
	 	\[
	 	f_2 : \Lambda_1  \to \prod\nolimits_{j=1}^{2s} \Lambda , \ x \mapsto \big( \ell_{a_1}(x),\rho_{a_1}(x),\ldots, \ell_{a_s}(x),\rho_{a_s}(x)\big).
	 	\] 
	 	Note that $\mathrm{im}(f_2)\subset\prod_{j} \{0\}\times M$ since $\Lambda_1\subset \ann(R)\times M$. Also, the kernel of $f_2$ is $\ann(\Lambda)$. So, we get that $\Lambda_1/\ann(\Lambda)$ is definably isomorphic to $\mathrm{im}(f_2)$, which implies that it is internal to the definable subgroup $\sum_j \mathrm{im}(\ell_{a_j}) + \sum_j \mathrm{im}(\rho_{a_j})$ of $\{0\}\times M$. Consequently, the quotient $\Lambda/\ann(\Lambda)$ is analysable in two steps to a definable subgroup of $\{0\}\times M$.
	 	
	 	\noindent {\em (3)} It follows from  Proposition \ref{P:Ann} that $\Lambda^\circ/\ann_{\Lambda^\circ}(\Lambda^\circ)$ and hence $\Lambda/\ann(\Lambda^\circ)$ are almost $\Pp$-internal. By (2) we have that $\Lambda/\ann(\Lambda)$ is analysable to a definable subgroup of $\{0\} \times M$, say $H$. So, we deduce that $\Lambda/\ann(\Lambda^\circ)$ is analysable to $H$ as well, since $\ann(\Lambda)\subset \ann(\Lambda^\circ)$ and so $\Lambda/\ann(\Lambda)$ projects definably onto $\Lambda/\ann(\Lambda^\circ)$. Altogether, we obtain that $\Lambda/\ann(\Lambda^\circ)$ is almost internal to $H$ by Fact \ref{F:Internal}.  \end{proof}
	 
	 \begin{remark}
	 The proof above yields that $\Lambda/\ann(\Lambda)$ is analysable to a definable subgroup of $\{0\} \times \big( R\cdot M \cup M\cdot R\cup M\cdot M)$. Likewise, we obtain a similar statement in (3).
	 \end{remark}

	\section{Triangular rings without the CBP}\label{s:NonA}
	 
		In this section we obtain several non-associative rings whose theories do not satisfy the CBP. We first study (general) triangular rings obtained from finite-dimensional algebras and later from Lie algebras.  As a consequence, we see that the triangular ring $\Lambda(k,V)$ is a toy example of a finite Morley rank theory without the CBP, where $V$ is a finite-dimensional vector space over an algebraically closed field $k$ of characteristic $0$.
		
	\subsection{Non-associative algebras} Recall that a non-associative algebra $A$ is a vector space over a field $k$ together with a distributive multiplication. Thus, we can regard $A$ as a non-associative ring which in additon is a left and right $k$-module, where left and right multiplication by scalars agree. Hence, we can consider its general triangular ring $\Lambda(k,A)$.
	
	\begin{remark}\label{R:Interpret}
	Let $A$ be a finite-dimensional algebra over an algebraically closed field $k$. Then $A$ is definable in the field $k$. Indeed, the additive group structure of $A$ is clearly definable on $k$. Moreover, note that  multiplication is determined by the equations \[
		v_i \cdot v_j  = \sum_{\ell=1}^n c_{i,j,\ell} v_\ell,
		\] 
		where $v_1,\ldots,v_n$ is a basis of $A$ over $k$ and $c_{i,j,\ell}\in k$. So, the algebra multiplication is also definable on $k$. Therefore, the ring theories of both non-associative rings $A$ and $\Lambda(k,A)$ have finite Morley rank. 
	\end{remark}

	\begin{theorem}\label{T:Algebra}
	Let $A$ be an algebra over an algebraically closed field $k$ of characteristic $0$ and assume further that $\ann(A)$ is non-trivial. If $\Lambda(k,A)$ has finite Morley rank, then its theory does not satisfy the CBP. In particular, the theory of $\Lambda(k,A)$ does not satisfy the CBP whenever $A$ is finite-dimensional over $k$.
	\end{theorem}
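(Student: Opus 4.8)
The plan is to assume that the ring theory of $\Lambda:=\Lambda(k,A)$ satisfies the CBP and to derive a contradiction by producing an element with infinitely many conjugates over a set over which it ought to be algebraic. Throughout I would work in a monster model, which by Remark \ref{R:Interpret} may be taken to be $\Lambda(K,A_K)$ for a monster model $K\models\mathrm{ACF}_0$ extending $k$ and $A_K$ the algebra over $K$ given by the same structure constants; this also disposes of the ``in particular'' clause, since finite-dimensionality guarantees finite Morley rank via that interpretation. Note that $\ann(A_K)\neq 0$, as $\ann(A)\neq 0$ and the annihilator is cut out by linear equations.

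First I would check that Lemma \ref{L:Analysis} applies with $R=K$ and $M=A_K$. As $K$ is a field acting by scalar multiplication, one computes $\ann(K)=\ann_K(A_K)=\ann_K(\ann(A_K))=\{0\}$ and $\ann_{A_K}(K)=\{0\}$, so part (1) gives $\ann(\Lambda)=\{0\}$; moreover the additive group of $\Lambda$ is divisible, hence connected, so $\ann(\Lambda^\circ)=\{0\}$ as well. Under the CBP assumption, Lemma \ref{L:Analysis}(3) then yields that $\Lambda=\Lambda/\ann(\Lambda^\circ)$ is almost $\Pp$-internal to a definable subgroup $H$ of $\{0\}\times A_K$. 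Unwinding this, there is a finite parameter set $B$ (finite by the finitary nature of algebraicity) such that, choosing any $r\in K$ transcendental over $B$, one has $(r,0)\in\acl(B,h_1,\dots,h_n)$ for some $h_1,\dots,h_n\in H\subseteq\{0\}\times A_K$.

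The core of the proof is to manufacture automorphisms of the monster fixing $B$ and all of $\{0\}\times A_K$ while moving $(r,0)$. Let $k_0\subseteq K$ be the finitely generated subfield spanned by the coordinates of $B$ and fix $0\neq v\in\ann(A_K)$. For each $c\in K$, I would choose a derivation $D_c\colon K\to K$ vanishing on $\acl(k_0)$ with $D_c(r)=c$: this exists precisely because $r$ is transcendental over $\acl(k_0)$, so one may prescribe $D_c$ arbitrarily on a transcendence basis through $r$ and extend it uniquely to $K$ --- and it is here that characteristic $0$ and algebraic closedness are used. Setting $\delta_c=v\cdot D_c\colon K\to\ann(A_K)$, the Leibniz rule for $D_c$ is exactly the hypothesis of Lemma \ref{L:Aut}, so $\sigma_c\colon(r',m)\mapsto(r',m+\delta_c(r'))$ is a ring automorphism of $\Lambda$, that is, an automorphism of the monster. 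Since $\delta_c$ vanishes on $k_0$, the map $\sigma_c$ fixes $B$ pointwise, and it fixes $\{0\}\times A_K$ pointwise (as $\delta_c(0)=0$), in particular the $h_i$; yet $\sigma_c(r,0)=(r,cv)$.

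Finally I would conclude: each $\sigma_c$ fixes $B\cup\{h_1,\dots,h_n\}$ pointwise, so every $\sigma_c(r,0)=(r,cv)$ realises $\tp((r,0)/B,h_1,\dots,h_n)$; as $c$ varies over $K$ these are pairwise distinct because $v\neq 0$, giving infinitely many realisations and contradicting $(r,0)\in\acl(B,h_1,\dots,h_n)$. Hence the CBP fails. I expect the main obstacle to be the combination of the passage to the monster model and the derivation-extension step: one must ensure that the monster is again a triangular ring over an algebraically closed field with nontrivial annihilator, that the constructed maps are genuine automorphisms of the pure ring structure through Lemma \ref{L:Aut}, and above all that a derivation vanishing on the finite parameter set yet assuming a prescribed nonzero value at a transcendental point exists --- this last point is exactly where the hypotheses that $k$ is algebraically closed of characteristic $0$ and that $\ann(A)$ is nontrivial become indispensable.
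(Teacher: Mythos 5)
Your proposal is correct and follows essentially the same route as the paper's proof: the same appeal to Lemma \ref{L:Analysis}(3) (noting $\ann(\Lambda)=0$ and connectedness via divisibility) to get that $\Lambda$ is almost internal to a definable subgroup of $\{0\}\times A$, the same automorphisms $(r',m)\mapsto (r',m+D(r')\cdot v)$ built from a field derivation multiplied by a nonzero element of $\ann(A)$ via Lemma \ref{L:Aut}, and the same infinitely-many-conjugates contradiction with algebraicity; the only cosmetic difference is that you vary the derivation (prescribing $D_c(r)=c$) to produce the infinite family of conjugates, whereas the paper takes a single nontrivial derivation and iterates the resulting automorphism, using characteristic $0$ to keep the iterates $(\alpha, i\cdot\delta(\alpha))$ distinct. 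The one point to flag is your opening step: realizing the monster as $\Lambda(K,A_K)$ via Remark \ref{R:Interpret} presupposes that $A$ is finite-dimensional, so as written your argument establishes the ``in particular'' clause but not the nominally more general hypothesis (finite Morley rank with $A$ possibly infinite-dimensional); the paper instead passes directly to a saturated model assumed to be of the form $\Lambda(k,A)$ --- a step one can justify in general, e.g., by a suitable ultrapower, which is again a triangular ring $\Lambda(k^{\mathcal U},A^{\mathcal U})$ with $k^{\mathcal U}$ algebraically closed of characteristic $0$ and $\ann(A^{\mathcal U})\neq 0$.
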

	\begin{proof} We first observe that we may assume that $\Lambda:=\Lambda(k,A)$ is saturated and uncountable, hence $k$ is uncountable as well. %To see this, consider the structure $\Lambda$ with predicates for $k$ and $A$ with all its algebraic structure. Then it suffices consider a saturated elementary extension with $k$ uncountable and then take its reduct in the language of rings.
	
	Now, suppose aiming for a contradiction that $\Lambda$ satisfies the CBP. Since $k$ acts faithfully on $A$, both $k$ and $A$ satisfy the hypothesis of Lemma \ref{L:Analysis} and therefore our assumption yields that $\Lambda$ is almost internal to a definable subgroup of $\{0\}\times A$. So, there exists some $a_1=(\alpha_1,v_1),\ldots,a_n=(\alpha_n,v_k)\in \Lambda$ witnessing this.
	
	As $k$ is uncountable and algebraically closed of characteristic $0$, we can find some non-trivial derivation $\delta_0:k\to k$ that vanishes on $\mathbb Q(\alpha_1,\ldots,\alpha_n)^{\rm alg}\subset k$, see for instance \cite[Ch. 2, Section 17]{ZS58}. Fix some $x_0\in \ann(A)\setminus \{0\}$ and define the map 
	\[
	\delta: k\to A, \ \alpha  \mapsto \delta_0(\alpha) \cdot x_0. 
	\]
	It is straightforward to verify that $\delta$ is an additive map satisfying that
	\[
	\delta (\alpha\beta) = \delta(\alpha)\cdot \beta + \alpha\cdot \delta(\beta)
	\] 
	So, by Lemma \ref{L:Aut} we obtain an automorphism $\sigma\in \Aut(\Lambda)$ fixing pointwise $\{0\} \times A$ and $\
	a_1,\ldots,a_n$. On the other hand, for any element $\alpha\in k\setminus  \mathbb Q(\alpha_1,\ldots,\alpha_n)^{\rm alg}$ such that $\delta_0(\alpha)\neq 0$, we have that the $(i+1)$-iterated image of $a=(\alpha,0)$ under $\sigma$ is
	\[
	\sigma^{i+1}(a) =  \sigma((\alpha,i\cdot \delta(\alpha))) = (\alpha, i\cdot \delta(\alpha) + \delta(\alpha)) = (\alpha,(i+1)\cdot \delta(\alpha)).
	\]
	In particular, it follows that $\sigma^i(a) \neq \sigma^j(a)$ for any $0<i<j$. Otherwise, we would get $(j-i)\cdot \delta_0(\alpha)\cdot x_0=0$ which would imply that $x_0=0$. Hence, we get that $a \not\in \acl(a_1,\ldots,a_n,\{0\}\times A)$ and therefore we obtain the desired contradiction.
	\end{proof}
	
	In view of the above result, we obtain a toy example of a finite Morey rank structure without the CBP. Regarding a vector space $V$ over a field $k$ as a $k$-algebra with trivial product we readily have the following:
	
	\begin{cor}\label{C:Vector}
	Let $V$ be a finite-dimensional vector space over an algebraically closed field $k$ of characteristic $0$. The theory of $\Lambda(k,V)$ does not satisfy the CBP.
	\end{cor}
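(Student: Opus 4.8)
The plan is to obtain this as an immediate instance of Theorem \ref{T:Algebra}. The key observation, already indicated in the sentence preceding the statement, is that any $k$-vector space $V$ may be regarded as a non-associative $k$-algebra by equipping it with the trivial product $v\cdot w = 0$ for all $v,w\in V$. Under this convention the triangular ring $\Lambda(k,V)$ is exactly one of the rings covered by Theorem \ref{T:Algebra}, so it suffices to check that the hypotheses of that theorem are met.

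First I would verify the annihilator hypothesis. With the trivial product, every element of $V$ annihilates $V$ on both sides, so $\ann(V)=V$; in particular $\ann(V)$ is non-trivial as soon as $V\neq 0$, which we may assume (for $V=0$ the ring $\Lambda(k,V)$ collapses to the field $k$, which is strongly minimal and hence satisfies the CBP). Second, since $V$ is finite-dimensional over $k$ and $k$ is algebraically closed, Remark \ref{R:Interpret} guarantees that $\Lambda(k,V)$ is interpretable in $k$ and therefore has finite Morley rank. As $k$ is algebraically closed of characteristic $0$ by hypothesis, all assumptions of Theorem \ref{T:Algebra} hold, and the conclusion that the theory of $\Lambda(k,V)$ does not satisfy the CBP follows directly.

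There is essentially no obstacle here beyond correctly matching the degenerate algebra structure to the framework of Theorem \ref{T:Algebra}: the whole content of the corollary has already been absorbed into that theorem and, through it, into Lemma \ref{L:Aut} and Lemma \ref{L:Analysis} together with Proposition \ref{P:Ann}. It is worth noting why this is the cleanest (\emph{toy}) case: in the proof of Theorem \ref{T:Algebra} the violation of the CBP is produced by lifting a non-trivial derivation $\delta_0$ of $k$ to an automorphism of $\Lambda$ via a fixed $x_0\in\ann(A)\setminus\{0\}$, and here $\ann(V)=V$ so \emph{any} nonzero vector serves as $x_0$ and the lifting condition on $\delta$ is automatic. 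The only point requiring a moment's care is the exclusion of the trivial vector space, which is why the statement is understood to concern a nonzero $V$.
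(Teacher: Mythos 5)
Your proof is correct and takes essentially the same route as the paper, which states the corollary as an immediate instance of Theorem \ref{T:Algebra} by regarding $V$ as a $k$-algebra with trivial product. Your additional checks --- that $\ann(V)=V$ is non-trivial for $V\neq 0$, that Remark \ref{R:Interpret} supplies finite Morley rank, and the explicit exclusion of the degenerate case $V=0$ (for which $\Lambda(k,V)\cong k$ satisfies the CBP) --- merely spell out what the paper leaves implicit.
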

	In fact, this yields that the ring $\C[x]/(x^2)$ does not satisfy the CBP, since it is cleary isomorphic to $\Lambda(\C,\C^+)$. In Section \ref{s:Rings} we will see that the argument above goes through for arbitrary commutative unitary local rings of characteristic $0$. 
	
	\subsection{Lie algebras} \label{s:Lie} Recall that a {\em Lie ring} $L$ is a non-associative ring in which multiplication $[ \cdot \,, \cdot]$, called Lie bracket, is alternative and satisfies the Jacobi identity, {\it i.e.} $[x,x]=0$ for every $x\in L$ and
		\[
		[x,[y,z]] + [y,[z,x]] + [z,[x,y]] = 0
		\] 
		for every $x,y,z\in L$.  A {\em Lie algebra} $\g$ over a field $K$ is a Lie ring which is a $K$-vector space and where the bracket operation  $[\cdot \,, \cdot ]$  is bilinear also with respect to $K$. It follows from bilinearity and alternativity that $[x,y]=-[y,x]$ for every $x,y\in L$.
	
	Every Lie algebra is an example of a Lie ring, where the linear structure is omitted. Also,  every associative ring (algebra) can be made into a Lie ring (algebra) by setting $[x,y]=xy-yx$. In particular, given a vector space $V$, its set of endo\-morphisms $\mathrm{End}(V)$ is an associative algebra. The corresponding Lie algebra is the {\em general linear Lie algebra} which is denoted by $\mathfrak{gl}(V)$. As usual, we denote by $\mathfrak{gl}_n(K)$ the general linear Lie algebra when $V$ is an $n$-dimensional vector space over $K$. 
	
	A {\em linear Lie ring} is a subring of $\mathfrak{gl}(V)$ with the commutator as the Lie bracket. In fact, by  Ado's Theorem every finite-dimensional Lie algebra over an algebraically closed field of characteristic $0$ is a linear Lie ring. The same is true in positive characteristic which is due to Iwasawa, see \cite[Chapter VI]{nJ62} for proofs.

%	Given two additive subgroups $H,N\subset L$, the commutator $[H,N]$ is the subgroup of $L$ generated by all elements of the form $[a,b]$ with $a\in H$ and $b\in N$. We set $L^1=L=L^{(1)}$ and define recursively  $L^{k+1} = [L,L^k]$ and $L^{(k+1)} = [L^{(k)}, L^{(k)}]$ for $k\ge 1$. We say that a Lie ring is abelian if $L^2=0$, it is  $k$-step nilpotent if $L^{k+1} =0$ and $L^{k}\neq 0$, and it is $k$-step solvable if $L^{(k+1)} =0$ but $L^{(k)}\neq 0$ for $k\ge 1$. 

	\begin{remark}\label{R:Lie-CBP}
		There are many natural examples of Lie rings of finite Morley rank that satisfy the CBP for trivial reasons.  
		\begin{itemize}
			\item Every abelian Lie ring of finite Morley is an abelian group together with a trivial Lie bracket, so its theory in the language of Lie rings is one-based and hence it satisfies the CBP.
			\item Every simple non-abelian Lie ring of finite Morley, {\it i.e.} a Lie ring without proper ideals, is almost strongly minimal, see the proof of \cite[Lemma 3]{fW01}. Hence, its theory in the language of Lie rings also satisfies the CBP. 
		\end{itemize}
	\end{remark}
	
	\begin{prop}\label{P:2-dim}
		Every $2$-dimensional Lie algebra $\g$ over an algebraically closed field $K$ is almost strongly minimal regarded as Lie ring. 
	\end{prop}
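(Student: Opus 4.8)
The plan is to reduce to the classification of $2$-dimensional Lie algebras and treat the two resulting cases separately. Over an arbitrary field there are, up to isomorphism, exactly two such Lie algebras: the abelian one, and the non-abelian one admitting a basis $\{e,f\}$ with $[e,f]=e$, so that $[ae+bf,\,ce+df]=(ad-bc)\,e$. I would first dispose of the abelian case and then extract a strongly minimal set out of the bracket in the non-abelian case. Recall that $\g$ is interpretable in $K$ and hence has finite Morley rank, as in Remark \ref{R:Interpret}, so that the rank computations below make sense.

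If $\g$ is abelian the Lie bracket is identically zero, and so, as a Lie ring, $\g$ is interdefinable with its additive group $(K^2,+)$. Since $K$ is algebraically closed, hence infinite, this group is an infinite vector space over the prime field (in positive characteristic) or over $\Q$ (in characteristic $0$), and the theory of such a structure is strongly minimal. Thus $\g$ is itself strongly minimal, in particular almost strongly minimal.

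For the non-abelian case I would work with $I:=\{[x,y]\mid x,y\in\g\}$. A direct computation in the basis gives $I=Ke$, so $I$ is a definable additive subgroup; moreover, for any $u\ne 0$ the adjoint map $\ad_u$ is an additive endomorphism of $\g$ with image $I$ and kernel exactly $Ku$. As the underlying additive group $(K^2,+)$ is connected, $I=\ad_u(\g)$ is the image of a connected group and is therefore connected; its Morley rank is $1$ (routine bookkeeping: $\ad_{u}$ realises $\g$ as an extension of $I$ by $Ku$, which are definably isomorphic, while $\g$ has rank $2$ by comparison with its interpretation in the strongly minimal field $K$). Being connected of rank $1$, the set $I$ is strongly minimal. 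Now fix two linearly independent elements $u_0,u_1\in\g$ and consider the additive map
\[
\phi:\g\longrightarrow I\times I,\qquad v\longmapsto\bigl([u_0,v],\,[u_1,v]\bigr),
\]
which is definable over $\{u_0,u_1\}$. It is injective: if $\phi(v)=\phi(v')$ then $v-v'$ lies in $\ker\ad_{u_0}\cap\ker\ad_{u_1}=Ku_0\cap Ku_1=\{0\}$. Hence $\phi$ definably embeds $\g$ into $I\times I$, so every element of $\g$ lies in $\dcl(\{u_0,u_1\}\cup I)$. This exhibits $\g$ as internal to the strongly minimal set $I$, whence $\g$ is almost strongly minimal.

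The linear-algebra computations identifying $I$, together with the kernels and image of $\ad_u$, are routine once the basis is fixed; the point that deserves care, and which I regard as the crux, is that all of these manipulations are carried out by maps definable in the pure language of Lie rings, namely the adjoint endomorphisms $\ad_{u_i}$ and the resulting map $\phi$. The only genuinely delicate verification is that $I$ is strongly minimal rather than merely of rank $1$, which I would secure through connectedness of $I=\ad_{u_1}(\g)$. Beyond this I do not anticipate a serious obstacle; the existence of two linearly independent parameters $u_0,u_1$—needed so that the two kernels meet trivially—is guaranteed since $\dim_K\g=2$ and $K$ is infinite.
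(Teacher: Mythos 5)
Your proposal is correct and takes essentially the same route as the paper: both reduce to the non-abelian algebra with basis satisfying $[x,y]=x$, identify the derived subalgebra $[\g,\g]$ as strongly minimal, and place $\g$ inside $\dcl$ of $[\g,\g]$ together with two independent parameters via the adjoint maps---your explicit definable injection $\phi(v)=([u_0,v],[u_1,v])$ is just the definable-map rendering of the paper's automorphism computation with $[-y,\cdot\,]$ and $[x,\cdot\,]$. The only differences are cosmetic: you spell out the abelian case and the connectedness/rank bookkeeping behind the strong minimality of $[\g,\g]$, which the paper asserts without detail.
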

	\begin{proof}
		Let $\g$ be a $2$-dimensional Lie $K$-algebra, which we may assume to be non-abelian. This Lie algebra has a basis $x,y$ such that its  Lie bracket is $[x,y]=x$, see \cite[p. 11]{nJ62}. Thus, the derived Lie subalgebra $[\g,\g]=[x,\g]$ is strongly minimal. Given a ring automorphism $\sigma\in \Aut(\g)$ fixing pointwise $[\g,\g]$ and the basis $x,y$, we see for any $a,b\in K$ that 
		\[
		\sigma(a)x = [-y,\sigma(a)x+\sigma(b)y)]= \sigma([-y,ax+by] ) = [-y,ax+by] = ax,
		\]
		so $\sigma(ax)=ax$. Likewise, one verifies that $\sigma(by)=by$, hence $\sigma(ax+by) =ax+by$. This shows that $\g\subset \dcl([\g,\g],x,y)$ and therefore $\g$ is almost strongly minimal.
	\end{proof}
	
	To obtain Lie rings without the CBP we will consider triangular rings $\Lambda(\g,\g^+)$ with respect to a Lie  algebra $\g$, where $\g^{+}$ denotes the additive group of $\g$. 
	It turns out that $\Lambda(\g,\g^+)$ is the semidirect sum of a Lie algebra $\g$ with respect to the adjoint representation $\ad:\g\to \mathfrak{gl}(\g^+)$, where $\ad(a)(x) = [a,x]_\g$.
	\begin{definition}
	Let $\g$ be a Lie algebra. The {\em semidirect sum} $\g\ltimes \g^{+}$ with respect to the adjoint representation  is the Lie algebra $\g\ltimes \g^+$ whose additive group is $\g\times \g$ and its  Lie bracket is 
	\[
	\big[(a,x),(b,y) \big] = \big( [ a,b ]_\g , \ad(a)(y) - \ad(b)(x)  \big) = \big( [ a,b ]_\g , [a,y]_\g + [x,b ]_\g   \big).
	\]
	\end{definition} 
	Note that $\g\ltimes \g^+ = \Lambda(\g,\g^+)$. Also,  since the kernel of $\ad$ is the center $\z(\g)$ of $\g$, we have that $\z(\g) = \ann_\g(\g^+)$ and  $\z(\g\ltimes \g^+) = \z(\g)\times \z(\g)$  by Lemma \ref{L:Analysis}.
	
	\begin{remark} This construction can be generalised to arbitrary representations as follows. To do so, one needs to consider the Lie algebra of derivations $\mathrm{Der}(\g)$ of a Lie algebra, where the Lie bracket is the commutator. Now, given two Lie algebras $\g_1$ and $\g_2$ and an homomorphism $\rho:\g_1\to \mathrm{Der}(\g_2)$, one defines the {\em semidirect sum} $\g_1\ltimes_\rho \g_2$ of $\g_1$ and $\g_2$ with respect to $\rho$ to be the additive group $\g_1\times \g_2$ together with the Lie bracket 
		\[
		\big[ (a,x) , (b,y) \big]  = \big( [ a,b ]_1 ,  [x,y]_2 + \rho(a)(y) - \rho(b)(x)  \big).
		\]
	This is precisely $\Lambda(\g_1,\g_2)$. However, not every semidirect sum of Lie algebras fails to have the CBP. Indeed, by  Proposition \ref{P:2-dim} the $2$-dimensional Lie $\C$-algebra  $\mathfrak{aff}(\C) = \C\ltimes_{\rm id} \C$ is almost strongly minimal.
	\end{remark}

	\subsubsection{Finite-dimensional Lie algebras} 
	Let $\g$ be a linear Lie algebra over a field $K$ and let $e_{i,j}$ denote the standard matrix unit whose sole non-zero entry $1$ is in the $(i,j)$ position. Every element $a\in \g$ can be written as $a=\sum_{i,j} a_{i,j}\cdot e_{i,j}$ for some coefficients $a_{i,j}\in K$. 
	Now, given a map $f:K\to K$, define 
	\[
	\hat{f}:\g\to \mathfrak{gl}_n(K), \ a=\sum\nolimits_{i,j}a_{i,j} \cdot e_{i,j} \mapsto \hat{f}(a) := \sum\nolimits_{i,j} f(a_{i,j})\cdot e_{i,j}, 
	\]
	that is, the map $\hat f$ consists of applying $f$ coordinatewise. 
	\begin{lemma}\label{L:Leibniz}
		Let $\g\subset \mathfrak{gl}_n(K)$ be a linear Lie ring over a field $K$. If $\delta:K\to K$ is a derivation on $K$, then $\hat\delta:\g\to \mathfrak{gl}_n(K)$ is an additive map satisfying the Leibniz rule. Furthermore, if  $v_1,\ldots,v_r$ is a basis of $\g$ over $K$ and $\delta$ vanishes on the coefficients of each $v_k$ with respect to the basis $e_{i,j}$ of $\mathfrak{gl}_n(K)$, then $\mathrm{im} (\hat \delta) \subset \g$. 
	\end{lemma}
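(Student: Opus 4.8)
The plan is to reduce both assertions to the behaviour of $\hat\delta$ on the ordinary (associative) matrix product and then read off the Lie-theoretic statements. Additivity of $\hat\delta$ is immediate: since $\delta$ is additive and $\hat\delta$ acts coordinatewise, one has $\hat\delta(a+b) = \sum_{i,j}\delta(a_{i,j}+b_{i,j})e_{i,j} = \hat\delta(a)+\hat\delta(b)$. The real content is the Leibniz rule, and I would obtain it by first proving the stronger fact that $\hat\delta$ is a derivation of the full associative algebra of $n\times n$ matrices over $K$ (the associative algebra whose associated Lie algebra is $\mathfrak{gl}_n(K)$), that is, $\hat\delta(ab) = \hat\delta(a)\,b + a\,\hat\delta(b)$ for the ordinary matrix product. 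This is a direct entrywise computation: using $(ab)_{i,j} = \sum_\ell a_{i,\ell}b_{\ell,j}$ and applying $\delta$, which is additive and satisfies $\delta(xy) = \delta(x)y + x\delta(y)$, term by term gives $\delta\big((ab)_{i,j}\big) = \sum_\ell\big(\delta(a_{i,\ell})b_{\ell,j} + a_{i,\ell}\delta(b_{\ell,j})\big) = (\hat\delta(a)\,b)_{i,j} + (a\,\hat\delta(b))_{i,j}$.

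Once $\hat\delta$ is known to be an associative derivation, the Leibniz rule for the Lie bracket follows formally. Since $[a,b] = ab - ba$, additivity of $\hat\delta$ together with the associative Leibniz rule yield
\begin{align*}
\hat\delta([a,b]) &= \hat\delta(ab) - \hat\delta(ba) \\
&= \hat\delta(a)b + a\hat\delta(b) - \hat\delta(b)a - b\hat\delta(a) = [\hat\delta(a),b] + [a,\hat\delta(b)],
\end{align*}
which is exactly the Leibniz rule in the sense of Lemma \ref{L:Aut}. Note that although $a,b$ range over $\g$, this computation takes place inside $\mathfrak{gl}_n(K)$, so no closure of $\g$ under $\hat\delta$ is needed at this stage.

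For the \emph{furthermore} clause, I would observe that the hypothesis that $\delta$ vanishes on the coefficients of each $v_k$ with respect to the basis $e_{i,j}$ is precisely the statement $\hat\delta(v_k) = 0$ for every $k$. Writing an arbitrary element $a\in\g$ as $a = \sum_k \lambda_k v_k$ with $\lambda_k\in K$, the coefficient of $e_{i,j}$ in $a$ is $\sum_k\lambda_k (v_k)_{i,j}$, and applying $\delta$ entrywise gives $\delta\big(\lambda_k (v_k)_{i,j}\big) = \delta(\lambda_k)(v_k)_{i,j} + \lambda_k\delta\big((v_k)_{i,j}\big) = \delta(\lambda_k)(v_k)_{i,j}$, since $\delta\big((v_k)_{i,j}\big) = 0$. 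Summing over $k$ and $(i,j)$ yields $\hat\delta(a) = \sum_k \delta(\lambda_k)v_k$, which lies in $\g$ because $\g$ is a $K$-subspace with basis $v_1,\dots,v_r$ and each $\delta(\lambda_k)\in K$. Hence $\mathrm{im}(\hat\delta)\subset\g$. There is no genuine obstacle here: the only labour is the bookkeeping in the entrywise Leibniz computation, and routing it through the associative product makes even that transparent.
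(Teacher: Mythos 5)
Your proposal is correct and follows essentially the same route as the paper: an entrywise computation showing that $\hat\delta$ is a derivation for the associative matrix product, from which the Leibniz rule for the bracket follows via $[a,b]=ab-ba$, and for the closure statement the observation that $\hat\delta(\lambda v_k)=\delta(\lambda)v_k$ when $\delta$ kills the coefficients of $v_k$. Your explicit framing of the first step as a derivation of the full associative algebra is only a cosmetic repackaging of the paper's identical computation.
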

	\begin{proof}
		Clearly, the map $\hat\delta$ is additive, as so is $\delta$. To verify that it satisfies the Leibniz rule, let $a=\sum_{i,j} a_{i,j}\cdot e_{i,j}$ and $b=\sum_{i,j} b_{i,j}\cdot e_{i,j}$ in $\g$ and set $a\cdot b=c$, where $c=\sum_{i,j} c_{i,j}\cdot e_{i,j}$. Note that
		\[
		\delta(c_{i,j}) = \delta \left( \sum\nolimits_k a_{i,k} b_{k,j} \right)= \sum\nolimits_k \delta \left( a_{i,k} b_{k,j}  \right) =\sum\nolimits_k a_{i,k} \delta (b_{k,j}) + \delta(a_{i,k})b_{k,j}. 
		\]
		It then follows that $\hat\delta(a\cdot b) = a\cdot \hat\delta(b) + \hat\delta(a)\cdot b$. So, we have that
		\[
		\hat\delta([a,b]) =  \hat\delta(a\cdot b) - \hat\delta(b\cdot a) = a\cdot \hat\delta(b) + \hat\delta(a)\cdot b - (b\cdot \hat\delta(a) + \hat\delta(b)\cdot a) = [a,\hat\delta(b)] + [\hat\delta(a), b],
		\]
		as desired.
		
		Finally, suppose that $v_1,\ldots,v_r$ is a basis of $\g$ over $K$ and that $\delta$ vanishes on their coefficients with respect to the basis $e_{i,j}$ of $\mathfrak{gl}_n(K)$. Write each $v_k$ as $\sum_{i,j} c_{i,j,k} \cdot e_{i,j}$ with $c_{i,j,k}\in K$. Note that, since $\delta(c_{i,j,k})=0$ for every $i,j$ and $k$, for any $\lambda\in K$ and any $k$ we have
		\[
		\hat \delta(\lambda \cdot v_k) = \sum_{i,j} \delta(\lambda \, c_{i,j,k}) \cdot e_{i,j,k} = \sum_{i,j} \delta(\lambda) \, c_{i,j,k}\cdot e_{i,j} = \delta (\lambda) \cdot v_k,
		\] 
		and consequently $\mathrm{im}(\hat \delta) \subset \g$.
	\end{proof}
	
	Next, we state and prove our main result on linear Lie rings over an algebraically closed field of characteristic $0$.
	
	\begin{theorem}\label{T:Lie}
		Let $\g$ be a finite-dimensional Lie algebra over an algebraically closed field $K$ of characteristic $0$. If $\g$ is  non-abelian, then the Lie ring $\g\ltimes \g^+$ has finite Morley rank and its theory does not satisfy the CBP.
	\end{theorem}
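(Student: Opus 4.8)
The plan is to follow the template of the proof of Theorem~\ref{T:Algebra}, using the coordinatewise derivation furnished by Lemma~\ref{L:Leibniz} in place of a single derivation into $\ann(A)$, and exploiting non-abelianness to force an infinite orbit modulo the annihilator. Write $\Lambda=\g\ltimes\g^+=\Lambda(\g,\g^+)$, so $R=\g$ and $M=\g^+$. Since $\Lambda$ is a finite-dimensional $K$-algebra it is interpretable in the algebraically closed field $K$ and hence has finite Morley rank (cf.\ Remark~\ref{R:Interpret}); this settles the first assertion. For the CBP I would first record the relevant annihilators: as $\g^+$ carries the trivial bracket we have $\ann(\g^+)=\g^+$, while $\ann(\g)=\ann_\g(\g^+)=\ann_\g(\ann(\g^+))=\z(\g)$. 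Thus the hypotheses of Lemma~\ref{L:Analysis} are met and $\ann(\Lambda)=\z(\g)\times\z(\g)$; since $\Lambda$ is connected, $\ann(\Lambda^\circ)=\z(\g)\times\z(\g)$ as well.

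Assume, towards a contradiction, that the theory of $\Lambda$ (which we may take saturated, with $K$ uncountable) satisfies the CBP. Lemma~\ref{L:Analysis}(3) then gives a definable subgroup $H\subseteq\{0\}\times\g^+$ such that $\Lambda/\ann(\Lambda^\circ)$ is almost internal to $H$; fix finitely many witnesses $a_1,\ldots,a_n\in\Lambda$, with $a_i=(b_i,x_i)$, so that every element of the quotient is algebraic over $a_1,\ldots,a_n$ together with $H$. By Ado's theorem I may regard $\g$ as a linear Lie ring $\g\subseteq\mathfrak{gl}_n(K)$ with basis $v_1,\ldots,v_r$. Let $F\subset K$ collect all matrix coordinates of $v_1,\ldots,v_r$ and of $b_1,\ldots,b_n$. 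As $K$ is uncountable and algebraically closed of characteristic $0$, there is $\lambda\in K$ transcendental over $\Q(F)^{\mathrm{alg}}$ and a derivation $\delta_0:K\to K$ vanishing on $\Q(F)^{\mathrm{alg}}$ with $\delta_0(\lambda)\neq0$ (see \cite[Ch.~2, Section~17]{ZS58}). By Lemma~\ref{L:Leibniz} the coordinatewise map $\delta:=\hat{\delta_0}$ is an additive Leibniz derivation $\g\to\g$ with image in $\g=\ann(\g^+)$, so Lemma~\ref{L:Aut} produces an automorphism $\sigma\colon(r,m)\mapsto(r,m+\delta(r))$ of $\Lambda$ fixing $\{0\}\times\g^+$ pointwise; moreover $\delta(b_i)=\hat{\delta_0}(b_i)=0$ because $\delta_0$ annihilates the coordinates of $b_i$, whence $\sigma(a_i)=a_i$.

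It remains to exhibit an infinite orbit, and this is exactly where non-abelianness enters. Since $\g$ is non-abelian, $\z(\g)\subsetneq\g$, so I may arrange the basis so that $v_{s+1}\notin\z(\g)$. Taking $a=(\lambda v_{s+1},0)$ and using $\hat{\delta_0}(\lambda v_{s+1})=\delta_0(\lambda)v_{s+1}$, the iterates are $\sigma^i(a)=(\lambda v_{s+1},\,i\,\delta_0(\lambda)v_{s+1})$. As $\z(\g)$ is a $K$-subspace, $\delta_0(\lambda)\neq0$ and $\operatorname{char}(K)=0$, the second coordinates $i\,\delta_0(\lambda)v_{s+1}$ lie in pairwise distinct cosets of $\z(\g)$, so the images of the $\sigma^i(a)$ in $\Lambda/(\z(\g)\times\z(\g))$ are pairwise distinct. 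Since $\sigma$ preserves $\ann(\Lambda^\circ)=\z(\g)\times\z(\g)$, it descends to an automorphism $\bar\sigma$ of $\Lambda/\ann(\Lambda^\circ)$ fixing the images of $a_1,\ldots,a_n$ and of $H$ pointwise; hence the image of $a$ has an infinite $\bar\sigma$-orbit, contradicting its algebraicity over $a_1,\ldots,a_n$ and $H$.

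The principal difficulty is to make a single derivation $\delta_0$ do three incompatible-looking jobs at once: it must kill a basis of $\g$ (so that $\hat{\delta_0}$ lands in $\g$, via Lemma~\ref{L:Leibniz}), kill the coordinates of the witnesses $b_i$ (so that $\sigma$ fixes the chosen parameters), yet be nonzero on a scalar multiplying a non-central basis vector (so that the orbit survives passing to $\Lambda/\ann(\Lambda^\circ)$). The first two constraints concern only the finite subset $F\subseteq K$, so they are reconciled by choosing $\delta_0$ to vanish on $\Q(F)^{\mathrm{alg}}$ and to be nonzero at a transcendental $\lambda$; the last constraint is precisely what requires $\z(\g)\neq\g$, i.e.\ the non-abelian hypothesis, for otherwise the infinite orbit would be absorbed into the annihilator and no contradiction would arise.
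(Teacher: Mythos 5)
Your proposal is correct and follows essentially the same route as the paper's proof: Lemma \ref{L:Analysis}(3) to reduce to almost internality of $\Lambda/\z(\g)\times\z(\g)$ to a subgroup of $\{0\}\times\g^+$, then Ado's theorem, the coordinatewise derivation of Lemma \ref{L:Leibniz} and the automorphism of Lemma \ref{L:Aut}, ending with an infinite orbit modulo the center. The one (minor, and in fact simplifying) divergence is that where the paper invokes Nowicki's theorem to get a derivation with constant field exactly $\Q(B)^{\mathrm{alg}}$ and then proves by an uncountability argument (its Claim 1) that $\hat\delta(b)\notin\z(\g)$ for some $b$, you evaluate at the explicit element $\lambda v$ with $\lambda$ transcendental over $\Q(F)^{\mathrm{alg}}$ and $v$ a non-central basis vector, so that $\hat\delta(\lambda v)=\delta_0(\lambda)v\notin\z(\g)$ immediately, requiring only the Zariski--Samuel extension fact already used in the proof of Theorem \ref{T:Algebra}.
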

	\begin{proof}
		By Ado's Theorem \cite[Chapter VI]{nJ62} we may assume that $\g$ is a linear Lie $K$-algebra. Also, as pointed out in Remarks \ref{R:Interpret}, the Lie algebra $\Lambda(\g,\g^+)=\g\ltimes \g^+$ is definable in $\mathrm{ACF}_0$. So, after replacing $K$ if necessary we may assume that $K$ is uncountable and hence $\g\ltimes \g^+$ is a saturated Lie ring of finite Morley rank. Furthermore, since the additive group of a vector space over $K$ is divisible, both $\g$ and $\g\ltimes \g^{+}$ are connected finite Morley rank Lie rings. 
		
		Suppose, to get a contradiction, that the Lie ring theory of $\g\ltimes \g^{+}$ satisfies the CBP. Therefore, we obtain by Lemma \ref{L:Analysis}  that $\g\ltimes \g^{+}/\z(\g\ltimes \g^+)$ is almost internal to a definable subgroup of $ \{0\}\times \g$. Let $A_0\subset \g\ltimes \g^{+}$ be a finite subset witnessing this, containing the parameters of a such subgroup. We show the existence of an element $(b,y)\in \g\ltimes \g^{+}$ such that the imaginary element $(b,y)+\z(\g\ltimes \g^{+})$ has an infinite orbit under the action of ring automorphisms of $\g\ltimes \g^{+}$ fixing pointwise $A_0$ and $\{0\}\times \g$.
		
		Let $(a_1,x_1),\ldots,(a_k,x_k)$ be the elements of $A_0$. As $\g$ is a linear Lie algebra over $K$, the elements $a_1,\ldots,a_k$ are matrices with coefficients  on $K$. Let $B\subset K$ be the finite subset consisting of these elements together with the coefficients of a basis of $\g$ with respect to the standard matrix units. By \cite[Theorem 4.2]{aN94}  we can find a non-trivial derivation $\delta$ on $K$ whose field of constants is precisely $\Q(B)^{\rm alg}$. Hence, we obtain by Lemma \ref{L:Leibniz} a non-trivial derivation  $\hat\delta$ on $\g$  that vanishes on $a_1,\ldots,a_k$. Furthermore, note that $\hat\delta$ only vanishes on a countable subset of $\g$, because $\Q(B)^{\rm alg}$ is countable.  
		
		\noindent \emph{Claim 1.}
		There exists some $b\in \g$ such that $\hat\delta(b) \not\in \z(\g)$.
		
		\noindent\emph{Proof of Claim 1.}
		Since $\g$ is not abelian, there is some $a\in \g$ such that $[a,\g]\neq \{0\}$.  In particular, the definable set $[a,\g]$ is uncountable and so $\hat\delta([a,\g])\neq \{0\}$, since $\hat\delta$ vanishes on a countable set. This yields that there must be some  $b\in \g$ such that $\hat\delta([a,b]) \neq 0$, so $\hat\delta(a)\not\in \z(\g)$ or $\hat\delta(b)\not\in \z(\g)$ by the Leibniz rule. 
		\hfill $\square${\tiny Claim 1}
		\medskip
		
		Now,  let $b\in \g$ be given by Claim 1 and consider the automorphism $\sigma$ of $\Lambda(\g,\g^+)=\g\ltimes \g^+$ associated to $\hat\delta$ given by Lemma \ref{L:Aut}. So, for an element $(c,z)\in \g\ltimes \g^+$ we have that $\sigma(c,z) = (c,z+\hat{\delta}(c))$. It then follows that $\sigma$ fixes pointwise $\{0\}\times \g$ and $A_0$, by the choice of $\delta$. On the other hand, we have for $n$ in $\N$ that
		\[
		\sigma^{n+1} (b,0) = \sigma (b,n\cdot \hat\delta(b)) =  (b,n\cdot \hat\delta(b) + \hat\delta(b)) = (b,(n+1)\cdot \hat\delta(b)).
		\]  
		We claim the following:

		\noindent \emph{Claim 2.}
		For $n<m$ we have that $\sigma^m(b,0) - \sigma^n(b,0)\notin \z(\g\ltimes \g^+)$.
		
		\noindent\emph{Proof of Claim 2.}
		It suffices to show that $(0,k\cdot \hat\delta(b))\not\in \z(\g\ltimes \g^+)$ for any $k\ge 1$. So, as $\z(\g\ltimes \g^+)= \z(\g)\times  \z(\g)$, we need to see that $k\cdot \hat\delta(b)   \not\in \z(\g)$. We already know that $\hat\delta(b)\not\in \z(\g)$ by the choice of $b$, so there is some $a\in \g$ such that $[\hat\delta(b),a]\neq 0$. Now, note that $\g$ is torsion-free, since it is a vector space over a field of characteristic $0$. Hence, we have that $[k\hat\delta(b),a] = k[\hat\delta(b),a] \neq 0$ for every $k\ge 1$, as desired. 
		\hfill $\square${\tiny Claim 2}
		\medskip
		
		The claim above implies that $\g\ltimes \g^+/\z(\g\ltimes \g^+)$ is not algebraic over $A_0\cup (\{0\}\times \g)$, which yields the desired contradiction. 
	\end{proof}

	\begin{example}\label{E:Heisenberg}
		For $n\ge 1$, let $\h_{2n+1}$ be the $(2n+1)$-dimensional Heisenberg  Lie $\C$-algebra. It is the $2$-step nilpotent linear Lie $\C$-algebra whose basis is $e_{1,2},\ldots,e_{1,n+2}, \newline e_{2,n+2},\ldots,e_{n+1,n+2}$. The $2$-step nilpotent Lie $\C$-algebra
		\[
		\h_{2n+1} \ltimes \h_{2n+1}^{+}= \left\{ \begin{pmatrix}
		a & x \\ 
		0 & a \\
		\end{pmatrix} \in \mathfrak{gl}_{4n+2}(\C)\ \Big| \ a,x\in \h_{2n+1} \right\}
		\]
		is a finite Morley rank Lie ring without the CBP, by Theorem \ref{T:Lie}. 
	\end{example}

	\subsubsection{Baudisch's Lie algebras} To conclude this section, we point out that  the work of Blossier and Jimenez \cite{BJ22} on  Baudisch's Lie algebras yields some further examples of Lie algebras without the CBP. 
	
	Consider the language of $2$-step nilpotent graded Lie $\F_q$-algebras, that is, the language of $\F_q$-vector spaces with a binary function for the Lie bracket together with two unary predicates standing for the components of the gradation.  Within this language, Baudisch \cite{aB96, aB09} constructed an uncountably categorical $2$-step nilpotent connected Lie  $\F_q$-algebra $\B$  (for $p>2$ and $q$ a $p$-power) of Morley rank $2$ which is graded and whose theory is CM-trivial, we refer to \cite{aP95} for basic properties on CM-triviality.  More precisely, Baudisch proved in \cite[Corollary 8.9]{aB09} the following:
	
	\begin{fact}\label{F:Baudisch} In the language of $2$-step nilpotent graded Lie $\F_q$-algebras, there is a CM-trivial uncountably categorical $2$-step nilpotent connected Lie  $\F_q$-algebra $\B=\B_1 \oplus \B_2$  (for $p>2$ and $q$ a $p$-power) of Morley rank $2$ such that both $\B_1$ and $\B_2$ are the interpretation of the unary predicates, and $\B_1$ is strongly minimal. Furthermore, we have that  $[\B_1,\B_1] =\B_2=\z(\B)$ and $[\B,\B_2]=0$. In particular, the theory of $\B$ is almost strongly minimal.  
	\end{fact}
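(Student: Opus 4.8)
The final \enquote{in particular} clause is immediate from the rest: since $\B_1$ is strongly minimal and $\B=\B_1\oplus[\B_1,\B_1]$, every element of $\B$ lies in the definable closure of finitely many elements of $\B_1$, so $\B\subseteq\dcl(\B_1)$ and $\B$ is almost strongly minimal. The substance of the statement is therefore the \emph{existence} of such a $\B$, which I would produce by a Hrushovski-style amalgamation with collapse, following Baudisch.

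First I would fix the class $\Kk$ of finite $2$-step nilpotent graded Lie $\F_q$-algebras $A=A_1\oplus A_2$ satisfying $A_2=[A_1,A_1]=\z(A)$ and $[A,A_2]=0$, and equip it with Baudisch's predimension
\[
\delta(A)=2\dim_{\F_q}(A_1)-\dim_{\F_q}(A_2).
\]
I would call an embedding $B\hookrightarrow A$ \emph{self-sufficient} (strong) when $\delta(B)\le\delta(B')$ for every intermediate $B\subseteq B'\subseteq A$, and work throughout with the category of strong embeddings. The first real task is to verify that $(\Kk,\text{strong})$ has the amalgamation property: given strong embeddings $C\hookrightarrow A$ and $C\hookrightarrow B$, one forms the \emph{free} amalgam over $C$, the point being to equip $A_1\oplus_{C_1}B_1$ with an alternating bracket into $A_2\oplus_{C_2}B_2$ that restricts correctly on each side and makes the predimension submodular, i.e.\ $\delta(A\oplus_C B)\le\delta(A)+\delta(B)-\delta(C)$; the brackets between \enquote{new} generators of the two sides are set as freely as the grading and $2$-step nilpotency allow.

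Once amalgamation is in place, the generic (Hrushovski--Fra\"iss\'e) limit of $(\Kk,\text{strong})$ is an $\omega$-stable, $\omega$-saturated structure, but its theory has infinite Morley rank. The decisive second step is the \emph{collapse}: following \cite{aB09}, I would fix a function $\mu$ that bounds, for each minimal algebraic extension type, the number of its realisations, and restrict to the subclass $\Kk_\mu$ of algebras respecting $\mu$. The content here is to check that $\Kk_\mu$ remains an amalgamation class and that its generic model $\B$ is uncountably categorical of finite Morley rank, with $\B_1$ the self-sufficient closure of a generic element of $A_1$ (hence strongly minimal) and total Morley rank $2$; the identities $\B_2=[\B_1,\B_1]=\z(\B)$ and $[\B,\B_2]=0$ are imposed by membership in $\Kk$.

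Finally, to obtain CM-triviality I would invoke the flatness of the predimension geometry: because $\delta$ is a genuine submodular predimension, the combinatorial pregeometry given by self-sufficient closure is flat, so the canonical base of a type over a set is controlled by that closure and no non-trivial field is interpretable; this yields CM-triviality in the sense of \cite{aP95}. The hard part will be the two amalgamation verifications and, above all, the collapse, where defining $\mu$ so that the collapsed generic has finite Morley rank while amalgamation survives is the genuine technical heart of the construction; establishing flatness, and hence CM-triviality, likewise requires a careful geometric analysis rather than any soft model-theoretic input.
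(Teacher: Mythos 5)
The paper offers no proof of this statement: it is imported verbatim from Baudisch's work (the construction is \cite{aB96}, the collapse and the precise form quoted is \cite[Corollary 8.9]{aB09}), so the intended justification is a citation, whereas you attempt to reconstruct the construction itself. Your treatment of the ``in particular'' clause is correct and is exactly the intended reading: in the graded language $\B_1$ is a definable strongly minimal set and $\B=\B_1\oplus[\B_1,\B_1]\subseteq\dcl(\B_1)$ (note this is language-sensitive: Remark \ref{R:Baudisch} shows $\B$ is \emph{not} almost strongly minimal as a pure Lie ring, and your argument correctly uses the predicate $\B_1$). The overall architecture you describe --- predimension, strong embeddings, free amalgamation, collapse via a multiplicity bound $\mu$ --- is indeed the shape of Baudisch's construction.

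However, as a proof there are genuine gaps beyond the honestly deferred verifications. First, the predimension is misstated: Baudisch's predimension counts \emph{relations}, $\delta(A)=2\dim_{\F_q}A_1-\dim_{\F_q}N(A)$ where $N(A)=\ker\big(\Lambda^2A_1\twoheadrightarrow A_2\big)$, not $2\dim A_1-\dim A_2$. The difference is fatal to your setup: with your $\delta$, the free $2$-step nilpotent algebra on $d$ generators has $\delta=2d-\binom{d}{2}<0$ for $d\geq 6$, and a bracket-free one-point extension of $A$ changes $\delta$ by $2-\dim A_1$; hence free extensions do not preserve the predimension, strong embeddings are not preserved by your free amalgam, and the Fra\"iss\'e argument cannot run. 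Your submodularity inequality $\delta(A\oplus_CB)\le\delta(A)+\delta(B)-\delta(C)$ is also the wrong desideratum: what the construction needs is \emph{equality} on the free amalgam (freeness adds no new relations, so $N$ does not grow), with the inequality reserved for arbitrary amalgams. Second, your route to CM-triviality via ``flatness of the predimension geometry'' is not an available shortcut here: Hrushovski's flatness argument concerns ab initio strongly minimal sets, while for Baudisch's algebra CM-triviality is established in \cite{aB96,aB09} by a direct analysis of canonical bases through self-sufficient closures; flatness of the closure geometry on $\B_1$ does not by itself yield CM-triviality of the whole graded structure. (Also, $\B_1$ is the interpretation of the degree-one predicate, not ``the self-sufficient closure of a generic element''.) Everything genuinely hard --- amalgamation for $\Kk_\mu$, the choice of $\mu$, finite rank and categoricity after the collapse, and $\z(\B)=\B_2$ with Morley rank exactly $2$ --- remains a program rather than a proof, which is precisely why the paper states this as a Fact with a reference rather than proving it.
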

	
	\begin{remark}\label{R:Baudisch}
	Consider the Baudisch Lie $\F_q$-algebra $\B$ in the language of rings and note that $\B_2=\z(\B)$ is strongly minimal by the fact above. In particular, we have that $\B_2 = [a,\B]$ and that $\z(\B)= \{ x\in \B \ | \ [x,a]=0\}$ for any $a\in \B\setminus \B_2$. Thus, the definable map $x\mapsto [a,x]$ yields that $\B$ is analysable in two steps to $\z(\B)= \B_2$. So, the ring theory of $\B$ is uncountably categorical. On the other hand, notice that in the language of rings $\B$ is not almost strongly minimal, since every $\F_q$-linear map $f:\B_1 \to \B_2$ induces an automorphism $a_1 + a_2 \mapsto a_1 + (a_2 + f(a_1))$ of $\B=\B_1 \oplus \B_2$ which fixes pointwise $\B_2$ and the kernel of $f$.
	\end{remark}

	In \cite[Section 3]{BJ22} Blossier and Jimenez studied a family of derivations of $\B$ in order to construct an additive cover of $\B$ without the CBP, by adapting the strategy followed in \cite{HPP13}. They construct many derivations on Baudisch's Lie algebra by means of bilinear algebra. We isolate the crucial point of their construction in the following statement, which we extracted from the proof of \cite[Corollary 4.6]{BJ22}, see the fourth paragraph on page $52$.
	
	\begin{fact}\label{F:BJ}
		In the language of $2$-step nilpotent graded Lie $\F_q$-algebras consider  a $2$-step nilpotent graded Baudisch Lie algebra $\B=\B_1 \oplus \B_2$ over $\F_q$ with $p>2$ and $q$ a $p$-power. Let $a,b_1,\ldots,b_n$ be independent elements of $\B_1$. For every $e\in\B_1\setminus \{0\}$, if $e$ is independent from $a,b_1,\ldots,b_n$, then there is a derivation $\delta:\B\to\B$ vanishing at $b_1,\ldots,b_n$ and such that $\delta(\B_1)\subset \B_1$ with $\delta(a)=e$.
	\end{fact}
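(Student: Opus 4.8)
The plan is to reduce the construction of such a derivation to a question of bilinear algebra, and then to exploit the genericity of Baudisch's amalgamation to solve it. Since we seek a derivation $\delta$ with $\delta(\B_1)\subseteq\B_1$, and since $\B_2=[\B_1,\B_1]=\z(\B)$ with $[\B,\B_2]=0$, such a $\delta$ automatically preserves the grading, because $\delta(\B_2)=\delta([\B_1,\B_1])\subseteq[\delta\B_1,\B_1]+[\B_1,\delta\B_1]\subseteq\B_2$. Hence $\delta$ is completely determined by the pair $(\delta_1,\delta_2)$, where $\delta_1:=\delta|_{\B_1}\in\mathrm{End}_{\F_q}(\B_1)$ and $\delta_2:=\delta|_{\B_2}\in\mathrm{End}_{\F_q}(\B_2)$, subject to the Leibniz rule
\[
\delta_2\big([x,y]\big) = [\delta_1 x, y] + [x,\delta_1 y] \quad\text{for all } x,y\in\B_1.
\]
As the commutator map $\beta\colon\B_1\times\B_1\to\B_2$ is alternating, bilinear and surjective onto $\B_2$, the map $\delta_2$ is uniquely determined by $\delta_1$ whenever it exists. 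The whole problem therefore becomes: find $\delta_1\in\mathrm{End}_{\F_q}(\B_1)$ with $\delta_1(a)=e$ and $\delta_1(b_i)=0$ for all $i$, such that the induced endomorphism $x\wedge y\mapsto\delta_1 x\wedge y+x\wedge\delta_1 y$ of the exterior square $\textstyle\bigwedge^{2}\B_1$ preserves the relation space $R$, i.e. the kernel of the induced surjection $\textstyle\bigwedge^{2}\B_1\twoheadrightarrow\B_2$. The admissible $\delta_1$ form an $\F_q$-linear subspace $\mathfrak d\subseteq\mathrm{End}_{\F_q}(\B_1)$ (the grading-preserving derivation algebra of $\B$), and the goal is to show that the evaluation $\mathfrak d\to\B_1^{\,n+1}$, $\delta_1\mapsto(\delta_1 a,\delta_1 b_1,\ldots,\delta_1 b_n)$, attains the value $(e,0,\ldots,0)$.

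First I would treat the finitary part. Working inside the self-sufficient (strong) closure of $\{a,b_1,\ldots,b_n,e\}$ in $\B_1$, I would define a partial derivation on the finite-dimensional graded subalgebra it generates by prescribing $a\mapsto e$, $b_i\mapsto 0$, extending $\F_q$-linearly on a chosen complement, and checking that the induced map on brackets is consistent with the finitely many relations present in $[U,U]\subseteq\B_2$. This is exactly where the hypothesis that $e$ is independent from $a,b_1,\ldots,b_n$ is used: independence guarantees that setting $\delta_1(a)=e$ forces no new linear relation among the brackets, so the partial assignment respects the predimension inequality governing the amalgamation class and thus defines a genuine partial derivation on a strong finite subalgebra.

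Next I would globalise. By the extension property inherent in Baudisch's generic (Hrushovski-style) construction---the same homogeneity that underlies the uncountable categoricity recorded in Fact \ref{F:Baudisch}---a partial derivation defined on a strong finite subalgebra and compatible with the predimension extends to a derivation of all of $\B$. Applying this to the partial derivation produced above yields a global $\delta\in\mathfrak d$ with $\delta(\B_1)\subseteq\B_1$, $\delta(a)=e$ and $\delta(b_i)=0$, as required.

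The main obstacle is the well-definedness of $\delta_2$, equivalently the relation-preserving condition on $R$: a priori a relation $\sum_k x_k\wedge y_k\in R$ need not be carried back into $R$ by $\delta_1$, and controlling this requires a precise description of the relation space arising in Baudisch's construction. The point is that $R$ is not arbitrary but consists exactly of the relations forced by the predimension, so the extension argument above is tailored to respect it; verifying that the prescribed values $a\mapsto e$, $b_i\mapsto 0$ violate no such relation---which is precisely the content of the independence hypothesis on $e$---is the crux, and is where the explicit bilinear-algebra analysis of \cite{BJ22} does the real work.
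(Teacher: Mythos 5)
First, note what the paper itself does with this statement: it gives no proof at all, but explicitly extracts it from the fourth paragraph of page 52 of the proof of \cite[Corollary 4.6]{BJ22}. So there is no in-paper argument to match, and a blind proof attempt is only successful if it is self-contained; yours is not. Your reduction is fine as far as it goes: since $\B_2=[\B_1,\B_1]$, grading preservation follows from $\delta(\B_1)\subseteq\B_1$, the component $\delta_2$ is uniquely determined by $\delta_1$ via the Leibniz rule, and well-definedness of $\delta_2$ is exactly the statement that the map induced by $\delta_1$ on $\bigwedge^2\B_1$ preserves the kernel of the surjection onto $\B_2$. But the two steps that carry the proof are asserted rather than proved, and neither follows from the facts you invoke.

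Concretely: (i) your globalisation step claims that a partial derivation on a strong (self-sufficient) finite subalgebra, compatible with the predimension, extends to a derivation of all of $\B$, ``by the extension property inherent in Baudisch's generic construction.'' Homogeneity of the collapsed generic extends partial isomorphisms between strong finitely generated substructures to automorphisms; it says nothing about extending \emph{derivations}, which are not morphisms of the amalgamation class. In the collapse, the relation space over a strong subalgebra is dictated by Baudisch's codes, and pushing a derivation across a prealgebraic (code-minimal) extension is a genuine problem of bilinear algebra --- this is exactly what \cite[Section 3]{BJ22} is for, and it does not come for free from genericity. (ii) Your finitary step asserts that independence of $e$ from $a,b_1,\ldots,b_n$ guarantees that the assignment $a\mapsto e$, $b_i\mapsto 0$ violates no relation in the strong closure; no argument is given converting forking-independence in the strongly minimal set $\B_1$ into compatibility with the relations in $[U,U]$, which is the crux. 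Your closing paragraph concedes that this ``is where the explicit bilinear-algebra analysis of \cite{BJ22} does the real work,'' which amounts to deferring the core of the statement to the very reference the fact is quoted from. As a proof, then, the proposal is an outline with the decisive content missing; as a reading of where the difficulty lies, it is accurate, and matches why the authors chose to cite rather than reprove the fact.
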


	Using this result of Blossier and Jimenez, we can follow the lines of their proof and adapt the proof of Theorem \ref{T:Lie} to show that $\B\ltimes \B^{+}$ does not satisfy the CBP. Note that $\B\ltimes \B^{+}$ is definable in $\B$, so it is CM-trivial as well by N\"ubling result \cite[Theorem 4.2]{hN05}. In particular, it does not interpret a field by \cite[Proposition 3.2]{aP95}. 
	
	\begin{theorem}\label{T:Lie_B}
		Let $\B$ be the $2$-step nilpotent Baudisch Lie $\F_q$-algebra. The $2$-step nilpotent Lie ring $\B\ltimes \B^{+}$ has finite Morley rank and its theory is CM-trivial but does not satisfy the CBP.
	\end{theorem}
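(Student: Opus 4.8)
The plan is to follow the proof of Theorem \ref{T:Lie} almost verbatim, the one essential change being forced by positive characteristic. The structural claims are immediate from the discussion preceding the statement: the Lie ring $\B\ltimes\B^+=\Lambda(\B,\B^+)$ is interpretable in $\B$, hence of finite Morley rank, and by N\"ubling's theorem \cite[Theorem 4.2]{hN05} together with Fact \ref{F:Baudisch} its theory is CM-trivial; its additive group $\B\times\B$ is connected because $\B$ is. Since $\B^+$ carries the trivial multiplication we have $\ann(\B^+)=\B^+$, and Fact \ref{F:Baudisch} gives $\z(\B)=\B_2=\ann_\B(\B^+)$; thus $\ann(\B)=\ann_\B(\B^+)=\ann_\B(\ann(\B^+))=\B_2$, so the hypotheses of Lemma \ref{L:Analysis} are met and $\z(\B\ltimes\B^+)=\B_2\times\B_2$.

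Assume towards a contradiction that the theory of $\B\ltimes\B^+$ satisfies the CBP. Then Lemma \ref{L:Analysis}(3) yields that $\B\ltimes\B^+/\z(\B\ltimes\B^+)$ is almost internal to a definable subgroup of $\{0\}\times\B$; fix a finite set $A_0$ witnessing this and containing the parameters of that subgroup, so that in particular $\B\ltimes\B^+/\z(\B\ltimes\B^+)\subset\acl\big(A_0\cup(\{0\}\times\B)\big)$. Write the first coordinates of $A_0$ as $a_1,\dots,a_k\in\B$ and decompose $a_i=a_i^{(1)}+a_i^{(2)}$ with $a_i^{(1)}\in\B_1$ and $a_i^{(2)}\in\B_2=[\B_1,\B_1]$. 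Fixing expressions $a_i^{(2)}=\sum_l[u_{il},v_{il}]$ with $u_{il},v_{il}\in\B_1$, let $W\subseteq\B_1$ be the $\F_q$-span of all the $a_i^{(1)}$ and all the $u_{il},v_{il}$. Here I rely on the fact that on the strongly minimal set $\B_1$ algebraic closure coincides with $\F_q$-linear span, so that an $\F_q$-basis $b_1,\dots,b_n$ of $W$ is an independent tuple in the sense of Fact \ref{F:BJ}; any graded $\F_q$-linear Lie derivation $\delta$ of $\B$ with $\delta(\B_1)\subset\B_1$ that vanishes on $b_1,\dots,b_n$ then vanishes on all of $W$, whence $\delta(a_i^{(1)})=0$ and, by the Leibniz rule applied to the chosen expressions, $\delta(a_i^{(2)})=0$; so $\delta(a_i)=0$ for every $i$. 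As the Leibniz identity is exactly the hypothesis of Lemma \ref{L:Aut} for $M=\B^+$ (where $\ann(\B^+)=\B^+$), each such $\delta$ produces an automorphism $\sigma_\delta(c,z)=(c,z+\delta(c))$ of $\B\ltimes\B^+$ fixing $\{0\}\times\B$ and $A_0$ pointwise.

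Finally I would manufacture infinitely many images of one coset. Choose $a\in\B_1$ independent from $b_1,\dots,b_n$; for each $e\in\B_1\setminus\{0\}$ independent from $a,b_1,\dots,b_n$, Fact \ref{F:BJ} provides a derivation $\delta_e$ vanishing on $b_1,\dots,b_n$ with $\delta_e(\B_1)\subset\B_1$ and $\delta_e(a)=e$. The corresponding $\sigma_{\delta_e}$ fixes $A_0\cup(\{0\}\times\B)$ pointwise (since $\delta_e$ vanishes on $b_1,\dots,b_n$, hence on each $a_i$ by the argument above) and sends $(a,0)$ to $(a,e)$. As $e\in\B_1$ and $\B_1\cap\B_2=\{0\}$, distinct choices of $e$ give pairwise distinct cosets $(a,e)+\z(\B\ltimes\B^+)$, and $\B_1$ being strongly minimal there are infinitely many admissible $e$. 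Hence $(a,0)+\z(\B\ltimes\B^+)$ has infinitely many conjugates over $A_0\cup(\{0\}\times\B)$, so it is not algebraic over this set, contradicting almost internality.

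The heart of the matter, and the point where this argument genuinely departs from Theorem \ref{T:Lie}, is the failure of torsion-freeness: since $\B$ has exponent $p$, iterating a single $\sigma_\delta$ produces only the finite orbit $\{(a,n\cdot\delta(a)):n\in\N\}$, so the infinite orbit must instead be produced by varying the derivation across the whole family supplied by Fact \ref{F:BJ}. The remaining, more bookkeeping-like obstacle is to guarantee that the derivations annihilate the $\B_2$-components $a_i^{(2)}$ and that an $\F_q$-basis of $W$ meets the independence requirement of Fact \ref{F:BJ}; both are handled by the coincidence of $\F_q$-independence with forking independence on $\B_1$ and by the Leibniz rule.
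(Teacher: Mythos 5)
Your structural reductions are all correct (interpretability in $\B$, N\"ubling for CM-triviality, the verification of the hypotheses of Lemma \ref{L:Analysis}, $\z(\B\ltimes\B^+)=\B_2\times\B_2$), and you have correctly diagnosed the essential departure from Theorem \ref{T:Lie}: in exponent $p$ one cannot iterate a single automorphism, so the infinite orbit must come from varying the derivation across the family of Fact \ref{F:BJ}. The genuine gap is in how you dispose of the parameters. You assert that on $\B_1$ algebraic closure coincides with $\F_q$-linear span, so that an $\F_q$-basis $b_1,\ldots,b_n$ of the span $W$ of the first coordinates $a_i^{(1)}$ and of the chosen bracket decompositions $u_{il},v_{il}$ is an independent tuple in the sense of Fact \ref{F:BJ}. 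This is false: the independence in Fact \ref{F:BJ} is forking independence in the strongly minimal set $\B_1$, whose pregeometry comes from Baudisch's Hrushovski-style collapse and is strictly coarser than linear span. Indeed, if $\acl$ restricted to $\B_1$ were the $\F_q$-span, then $\B_1$ would be a modular strongly minimal set; since $\B$ is almost strongly minimal with respect to $\B_1$ (Fact \ref{F:Baudisch}), its theory would then be one-based, and the group $\mathcal G(\B)$ of Section \ref{s:Group} would be abelian-by-finite, contradicting that it is connected, non-abelian and $2$-step nilpotent. Consequently a linear basis of $W$ may well be $\acl$-dependent, Fact \ref{F:BJ} does not apply to it, and even applied to a maximal forking-independent subtuple it only yields a derivation vanishing on the \emph{linear span} of that subtuple, with no control on the remaining, algebraically dependent elements among the $a_i^{(1)},u_{il},v_{il}$. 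Since the witnessing set $A_0$ is arbitrary, nothing forces its first coordinates into the span of a forking-independent tuple, and your construction of automorphisms fixing $A_0$ collapses at this point.

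This is precisely the obstruction the paper circumvents, not by spanning, but by a canonical-base detour that replaces the arbitrary parameters with independent generics. One fixes a countable model $\g_0\preceq\g$ (in the richer theory $T$ of $\g$ with the structure induced from the graded language of $\B$) containing the internality witnesses, picks $a\in\B_1$ generic over $\g_0$ and sets $\alpha=(a,0)$. Almost internality gives $\alpha+\z(\g)\in\acl_0(\g_0,\bar c)$ with $\bar c$ a tuple from $H\subseteq\{0\}\times\B$ (in fact $H=\{0\}\times\B_2$ by the remark after Lemma \ref{L:Analysis}), hence $\alpha+\z(\g)\in\acl_0(\cb_0(\alpha,\bar c/\g_0),\bar c)$; the canonical base is then definable over an initial segment $(\alpha_i,\bar c_i)_{1\le i\le n}$ of a Morley sequence of $\tp(\alpha,\bar c/\g_0)$ taken in $T$, which by \cite[Lemma 2.1]{BMPW15} is also a Morley sequence in the reduct $T_0$. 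After this maneuver the only parameters one must fix are tuples from $\{0\}\times\B$ (fixed by every $\sigma_e$ automatically) and $\alpha_1=(a_1,0),\ldots,\alpha_n=(a_n,0)$, whose first coordinates $a,a_1,\ldots,a_n$ are independent elements of $\B_1$ by genericity of $a$ over $\g_0$ --- exactly the hypothesis of Fact \ref{F:BJ}. From there your endgame (the automorphisms $\sigma_e$ from Lemma \ref{L:Aut} with $\sigma_e(\alpha)=(a,e)$, and distinct cosets modulo $\z(\g)$ since $\B_1\cap\B_2=\{0\}$) coincides with the paper's and is correct.
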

	\begin{proof}
		Set $\g=\B\ltimes \B^{+}$ and note that it is the triangular matrix ring $\Lambda(\B,\B^+)$ corresponding to $\B$ and $\B^+$. Let $T_0$ denote the Lie ring theory of $\g$ and let $T$ be the theory of $\g$ with all the induced structure from $\B$, in the language of $2$-step nilpotent graded Lie $\F_q$-algebras. It is clear that $T_0$ is a reduct of $T$ and that both have finite Morley rank. The subset $\B_1\subset \B$ is a definable group in $T$, as so is $\B$. Moreover, we know by Baudisch's construction (see Fact \ref{F:Baudisch}) that $\B_1$ is strongly minimal. Along the proof there will be an interplay between these to theories, so to distinguish the distinct algebraic closures, we denote by $\acl_0$ the algebraic closure in the sense of $T_0$ and by $\acl$ the one in the sense of $T$. Likewise, we adapt the notation for canonical bases.
		
		Suppose, to get a contradiction, that the Lie ring theory $T_0$ of $\g$ satisfies the CBP. So, by Lemma \ref{L:Analysis} we obtain that $\g/\z(\g)$ is almost internal to a definable, over a finite set $A_0$, subgroup $H$ of $ \{0\}\times \B$, and note that 
		\[
		\z(\g) = \z(\B) \times \z(\B) = \B_2\times \B_2.
		\] 
		In fact, note by the remark after Lemma \ref{L:Analysis} that $H$ is an infinite subgroup of $\{0\}\times \B_2$, so it is $\{0\}\times \B_2$, which is definable without parameters.
		
		Let $A$ be a finite subset of $\g$ containing $A_0$ and exemplifying that $\g/\z(\g)$ is almost internal to $H$. Fix a countable  model $\g_0 \preceq \g$, in the sense of $T$, containing $A$ and choose a generic element $a\in \B_1\subset \B$ over $\g_0$. Set $\alpha=(a,0)\in \g$. It then follows that $\alpha + \z(\g)\in \acl_0(\g_0,\bar c)$ for some finite tuple $\bar c$ of elements from $H$, so we have that
		\[
		\alpha+\z(\g)\in \acl_0(\cb_0(\alpha,\bar c/\g_0),\bar c).
		\] 
		Let  $(\alpha_i,\bar c_i)_{i\le n}$ be an initial segment of a Morley sequence of $\tp(\alpha,\bar c/\g_0)$, in the sense of $T$, with $\alpha_0=\alpha$ and $\bar c_0=\bar c$. By \cite[Lemma 2.1]{BMPW15}, this is also a Morley sequence in the sense of $T_0$ of $\tp_0(\alpha,\bar c/\g_0)$ and thus $\cb_0(\alpha,\bar c/\g_0)\in\dcl_0((\alpha_i,\bar c_i)_{1\le i\le n})$. So  
		\[
		\alpha+\z(\g)\in \acl_0((\alpha_i)_{1\le i\le n}, (\bar c_i)_{ i\le n}).
		\]
		Observe by $A_0$-invariance that $\bar c,\bar c_1,\ldots,\bar c_{n}$ are finite tuples from $H$. Also, each $\alpha_i=(a_i,0)$ for some $a_i\in \B_1$ and $a,a_1,\ldots,a_n\in \B_1$ are independent, since $a\in\B_1$ was taken to be generic over $\g_0$. Therefore, for every non-zero $e\in\B_1$  independent from $a,a_1,\ldots,a_n$, we can find  by Fact \ref{F:BJ} a non-trivial derivation $\delta_e$ on $\B$ vanishing at $a_1,\ldots,a_n$ such that $\delta_e(a)=e$. 
		
		From now on, we work within the reduct $T_0$ of $T$, and recall that $\g=\Lambda(\B,\B^+)$. By Lemma \ref{L:Aut} there exists a Lie ring automorphism $\sigma_e$ of $\g$ associated to $\delta_e$. So, we have that $\sigma_e(\alpha)=\sigma_e((a,0)) = (a,e)$ and that $\sigma_e$ fixes pointwise $\{0\}\times \B$ and $\alpha_1=(a_1,0),\ldots,\alpha_n=(a_n,0)$, by construction. Choosing infinitely many distinct elements $e_i\in \B_1$ for $i$ in $\N$ such that  each $e_i$ is independent from $a,a_1,\ldots,a_n$, we see for $i\neq j$ that $e_i - e_j \not\in \B_2$ since $\B_1\cap \B_2=\{0\}$,  so
		\[
		\sigma_{e_i}(\alpha) - \sigma_{e_j}(\alpha) = (a,e_i) - (a,e_j) = (0,e_i-e_j)  \not\in \z(\g).
		\] 
		Hence, the element $\alpha + \z(\g)$ is not $0$-algebraic over $\alpha_1,\ldots,\alpha_n,\bar c,\bar c_1,\ldots,\bar c_n$, which yields the desired contradiction. 
	\end{proof}
	
	\section{From Lie rings to groups}\label{s:Group}
	
	In this short section we show the existence of $2$-step nilpotent groups of finite Morley rank without the CBP.
	
	Let $L$ be a connected Lie ring of finite Morley rank which is $2$-step nilpotent and such that it is $2$-torsion-free and $2$-divisible. Given $x\in L$, we denote by $\frac{1}{2}x$ the unique element $y\in L$ such that $y+y=x$.
	
	The Baker-Campbell-Hausdorff formula yields a group operation $*$ on the set $L$:
	\[
	x* y := x + y + \frac{1}{2}[x,y],
	\]
	where the identity element is $0$ and the inverse of an element $x\in L$ is $-x$. So, the group $\mathcal{G}(L)=(L,*)$ is clearly definable in the Lie ring structure $L$. Furthermore, an easy computation yields that the group commutator is precisely the Lie bracket:
	\[
	[x,y]_{\mathcal{G}(L)} :=x*y *(-x) *(-y) = [x,y].
	\]
	Thus, the domain of the center and the commutator of the group are the center and the commutator of the Lie ring, respectively. Denote by $nx$ the product $x*x* \ldots* x$ where the symbol $*$ appears $n-1$ times. By induction on $n$ it is straightforward to see that 
	\[
	x * \stackrel{n}{\ldots} * x =  x + \stackrel{n}{\ldots} + x.
	\]
	So, the group $\mathcal G(L)$ is $n$-torsion-free and hence $n$-divisible, if so is $L$. In particular, it is $2$-divisible. Also, the element $\frac{1}{2}x$ is the unique element $y\in L$ such that $y*y = x$.
	
	On the other hand, we also have that the Lie ring structure is definable from the group structure. Namely, the Lie bracket and the additive operation are defined as 
	\[
	[x,y] := [x,y]_{\mathcal{G}(L)}  \ \text{ and } \ x+y:= x*y * (-\frac{1}{2}[x,y])
	\]
	respectively. In particular, the groups of automorphism of the structures $(\mathcal G(L),*)$ and $(L,+,[\cdot\, , \cdot])$ are equal.  
	Altogether we deduce:
	
	\begin{lemma}
		Let $L$ be a connected Lie ring of finite Morley rank such that $L/Z(L)$ is not almost $\Pp$-internal. Assume further that $L$ is $2$-torsion-free. If $L$ is $2$-step nilpotent, then the group $G=\mathcal G(L)$ has finite Morley rank and $G/Z(G)$ is not almost $\Pp$-internal.
	\end{lemma}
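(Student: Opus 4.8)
The plan is to leverage the mutual definability between the two structures recorded just above the lemma. Since the group operation $*$ is a term in $+$, $[\cdot,\cdot]$ and the halving map $x\mapsto \frac12 x$---which is $\emptyset$-definable, because $L^{+}$ is a connected (hence divisible) abelian group of finite Morley rank and $L$ is $2$-torsion-free---the group $G=\mathcal G(L)$ is definable in $L$ on the same underlying set; in particular $G$ has finite Morley rank. Conversely, $L$ is definable in $G$ via the displayed formulas $x+y = x*y*(-\frac12[x,y])$ and $[x,y]=[x,y]_{\mathcal G(L)}$. Hence the two structures, together with their expansions by imaginaries, have exactly the same $\emptyset$-definable sets on each power of the common domain, and therefore the same Morley rank function, the same algebraic and definable closures, and---since strong minimality, modularity and rank are all read off from these---the same family $\Pp$ of non-modular strongly minimal sets.

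First I would observe that almost $\Pp$-internality of a definable set is formulated purely in terms of definable sets, parameters and algebraic closure, so it is invariant under this interdefinability: a definable quotient is almost $\Pp$-internal when computed in $G$ if and only if it is so when computed in $L$. It therefore remains to identify the imaginary sort $G/Z(G)$ with $L/Z(L)$. We already have $Z(G)=Z(L)$ as subsets of the common domain; write $Z$ for this set. I would then check that the two coset equivalence relations coincide, namely that $x*(-y)\in Z$ if and only if $x-y\in Z$. This is exactly where $2$-step nilpotency is used: as $[L,L]\subseteq Z(L)=Z$, the bracket $[x,y]$ is central, so
\[
x*(-y) = x-y-\tfrac12[x,y]
\]
differs from $x-y$ by a central element. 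Consequently $x*(-y)\in Z$ precisely when $x-y\in Z$, so $G/Z(G)$ and $L/Z(L)$ induce the same partition of $L$ and hence the same imaginary sort.

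Combining these two points yields the conclusion: since $L/Z(L)$ is not almost $\Pp$-internal by hypothesis, and both the quotient and all the relevant model-theoretic data agree between $L$ and $G$, the quotient $G/Z(G)$ is not almost $\Pp$-internal either. I do not expect a genuine obstacle here; the only delicate point is the bookkeeping that $\Pp$ and the notion of almost-internality are preserved by the interdefinability, but because the transfer runs along the identity map on the common domain this reduces to the observations above together with the central-difference computation. The hypotheses of $2$-torsion-freeness (for the halving map) and $2$-step nilpotency (for the coset identification) are precisely what make this transfer go through.
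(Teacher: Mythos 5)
Your proposal is correct and follows essentially the same route as the paper's proof: both rest on the interdefinability of $G$ and $L$ on the common domain (so strongly minimal sets, their modularity, and almost $\Pp$-internality transfer) together with the identification $Z(G)=Z(L)$ and the coset equivalence $x*(-y)\in Z \Leftrightarrow x-y\in Z$, which uses $2$-step nilpotency exactly as you do. The only cosmetic difference is that the paper argues by contradiction, transferring the witnessing strongly minimal sets from $G$ to $L$, whereas you phrase the transfer directly; your one implicit step, that $\tfrac12 z$ is central whenever $z$ is (immediate from $2$-torsion-freeness), is harmless.
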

	\begin{proof}
		Suppose on the contrary that $G/Z(G)$ is almost $\Pp$-internal. Let $A\subset G$ be a finite set and let $X_1,\ldots,X_n$ be strongly minimal sets, defined over $A$, witnessing this. Note that since the Lie ring structure of $L$ is definable in $G$, each set $X_i$ is strongly minimal in $L$. Also, the set $Z(G)$ is precisely $Z(L)$ and so
		\[
		x*(-y) \in Z(G) \ \Leftrightarrow \ x-y\in Z(L),
		\]
		by the previous discussion. Hence, we deduce that $L/Z(L)$ is almost internal to $X_1,\ldots,X_n$, which yields that it is almost internal to $\Pp$, a contradiction.  
	\end{proof}
	
	Altogether, we deduce the existence of groups without the CBP:
	
	\begin{cor}\label{C:CBP-Group}
		There is a $2$-step nilpotent (CM-trivial) group $G$ of finite Morley rank whose theory in the pure language of groups does not satisfy the CBP.
	\end{cor}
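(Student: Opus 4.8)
The plan is to feed one of the Lie rings produced in Section~\ref{s:NonA} into the Baker--Campbell--Hausdorff machinery of the present section. Concretely, I would take $L=\B\ltimes\B^{+}$, the $2$-step nilpotent Baudisch Lie $\F_q$-algebra triangular ring of Theorem~\ref{T:Lie_B}, with $p>2$ and $q$ a $p$-power. Since the additive group of $L$ is an $\F_q$-vector space and $p$ is odd, multiplication by $2$ is a bijection on $L$, so $L$ is $2$-torsion-free and $2$-divisible; it is connected and $2$-step nilpotent by construction and has finite Morley rank. Thus all the hypotheses of the preceding lemma are in place once we know that $L/Z(L)$ is not almost $\Pp$-internal.

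The key observation is that this non-internality is exactly what the proof of Theorem~\ref{T:Lie_B} establishes, and not merely the weaker failure of the CBP. Indeed, by Proposition~\ref{P:Ann} (applied to the Lie ring $L$, whose annihilator is its center $Z(L)$) the CBP would force $L/Z(L)$ to be almost $\Pp$-internal; combined with the analysis of Lemma~\ref{L:Analysis} and Fact~\ref{F:Internal} this would make $L/Z(L)$ almost internal to the strongly minimal set $\{0\}\times\B_2=\{0\}\times\z(\B)$, and it is precisely this internality that the infinite automorphism orbit exhibited in the proof of Theorem~\ref{T:Lie_B} contradicts. Hence $L/Z(L)$ is not almost $\Pp$-internal, and applying the preceding lemma the group $G=\mathcal{G}(L)=(L,*)$ has finite Morley rank while $G/Z(G)$ is not almost $\Pp$-internal.

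It then remains to read off the remaining properties of $G$ and to conclude. Because the group operation $*$ and the Lie ring operations are mutually definable on the common domain $L$, the structures $(\mathcal{G}(L),*)$ and $(L,+,[\cdot\,,\cdot])$ are interdefinable; in particular they share the same definable sets, hence the same Morley rank and degree, so $G$ has Morley degree $1$ and is therefore connected, and since the group commutator coincides with the Lie bracket, $G$ is $2$-step nilpotent. Moreover $G$ is CM-trivial, being interdefinable with the CM-trivial structure $L$ of Theorem~\ref{T:Lie_B}, CM-triviality being preserved under interpretation by N\"ubling \cite[Theorem~4.2]{hN05}; in particular $G$ does not interpret an infinite field, by \cite[Proposition~3.2]{aP95}. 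Finally, were the pure group theory of $G$ to satisfy the CBP, the result of Kowalski and Pillay \cite[Section~4]{KP06} applied to the connected group $G$ would make $G/Z(G)$ almost $\Pp$-internal, contradicting the previous paragraph. Thus $G$ is a $2$-step nilpotent CM-trivial connected group of finite Morley rank whose theory in the pure language of groups does not satisfy the CBP.

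The only delicate point is the passage carried out in the second paragraph: one must extract the genuine non-internality of $L/Z(L)$ rather than just the formal failure of the CBP for $L$. This is legitimate because the proof of Theorem~\ref{T:Lie_B} refutes the CBP precisely by violating the almost $\Pp$-internality of $L/Z(L)$, so the needed statement is already contained in that argument; everything else, namely the transfer of finite Morley rank, connectedness, $2$-step nilpotency, and CM-triviality across the interdefinable structures $L$ and $G$, is routine. I would also remark in passing that running the same argument on the Heisenberg example $\h_{2n+1}\ltimes\h_{2n+1}^{+}$ of Example~\ref{E:Heisenberg} (which is $2$-torsion-free and $2$-divisible as a $\C$-vector space) yields, via Theorem~\ref{T:Lie}, a characteristic-$0$ group of finite Morley rank without the CBP, albeit one that is not CM-trivial.
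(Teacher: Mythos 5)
Your proposal is correct and follows essentially the same route as the paper: its proof of Corollary \ref{C:CBP-Group} likewise feeds $\B\ltimes\B^{+}$ from Theorem \ref{T:Lie_B} into the Baker--Campbell--Hausdorff lemma, invokes N\"ubling's theorem for CM-triviality, and notes the Heisenberg/Theorem \ref{T:Lie} variant without CM-triviality. Your only addition is to make explicit what the paper leaves implicit, namely that the proofs of Theorems \ref{T:Lie} and \ref{T:Lie_B} genuinely refute the almost $\Pp$-internality of $L/Z(L)$ (the hypothesis the lemma needs, with Kowalski--Pillay supplying the final step on the group side), which is a worthwhile clarification rather than a different argument.
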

	\begin{proof}
		Combining the lemma above with Theorem \ref{T:Lie} applied to a Heisenberg Lie $\C$-algebra (see Example \ref{E:Heisenberg}) we obtain the statement without CM-triviality. On the other hand, we could also combine the lemma above with Theorem \ref{T:Lie_B} to obtain a $2$-step nilpotent  group $G$ of finite Morley rank interpretable in a CM-trivial theory. Hence, N\"ubling's result \cite{hN05} yields that this group is also CM-trivial.
	\end{proof}
	
	Observe that the proof above shows that there is an algebraic group over the field of complex numbers without the CBP. On the contrary, it is conjectured that every algebraic group over an algebraically closed field of positive characteristic satisfies the CBP, see \cite[p. 874]{HPP13}.

	\section{Commutative unitary rings}\label{s:Rings}
	
	In this section we analyse whether commutative unitary rings of finite Morley rank satisfy the CBP. This family of rings was characterised by Cherlin and Reineke in \cite[Theorem 3.1]{CR76} as follows:
	
	\begin{fact}\label{F:CR-FMR}
	Let $R$ be  commutative unitary ring. The ring $R$ has finite Morley rank if and only if it is isomorphic to a direct product $R_1\times \ldots\times R_k$ of commutative local rings $R_i$ of finite Morley rank whose unique maximal ideal $\m_i$ is nilpotent, and whose residue field $R_i/\m_i$ is finite or algebraically closed.  
	\end{fact}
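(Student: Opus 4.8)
The plan is to prove the two implications separately; the substance is the forward direction. For the converse, a finite product $R_1\times\cdots\times R_k$ is interpretable in the $k$-sorted structure carrying the rings $R_i$ on separate sorts, a structure with finitely many sorts each of finite Morley rank has finite Morley rank, and finite Morley rank passes to interpreted structures; hence the product ring with coordinatewise operations has finite Morley rank. For the forward direction, assume $R$ has finite Morley rank. Then $R$ is $\omega$-stable and, Morley rank being finite, its definable subgroups satisfy the descending chain condition; in particular so do its definable ideals (principal ideals, and more generally ideals generated by a definable set, being definable). I would use this repeatedly.

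First I would split off the indecomposable factors. The idempotents of $R$ form a Boolean algebra definable in $R$ under $e\wedge f=ef$, $e\vee f=e+f-ef$ and $\lnot e=1-e$. An infinite Boolean algebra contains an infinite descending chain $e_1>e_2>\cdots$, which yields a strictly descending chain of definable ideals $(e_1)\supsetneq(e_2)\supsetneq\cdots$ and contradicts the descending chain condition; hence there are only finitely many idempotents. Their atoms $e_1,\dots,e_k$ are orthogonal and sum to $1$, giving $R=Re_1\times\cdots\times Re_k$ with each $R_i:=Re_i$ a definable, hence finite Morley rank, ring whose only idempotents are $0$ and $e_i$. So it suffices to treat an indecomposable factor $R_i$. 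Fixing $a\in R_i$, the chain $(a)\supseteq(a^2)\supseteq\cdots$ stabilizes, so $a^n=a^{2n}b$ for some $n$ and $b$; then $e:=a^nb$ is idempotent, hence $0$ or $e_i$. If $e=e_i$ then $a$ is a unit, and if $e=0$ then $a^n=a^ne=0$, so $a$ is nilpotent. Thus every element of $R_i$ is a unit or nilpotent, the non-units form the nilradical, and $R_i$ is local with maximal ideal $\m_i=R_i\setminus R_i^\times$, which is definable since $R_i^\times$ is.

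The delicate point, which I expect to be the main obstacle, is the nilpotency of $\m_i$. One cannot simply invoke the structure theory of Artinian rings: when the residue field is finite $R_i$ need not be Noetherian (for instance $\m_i^2=0$ with $\m_i$ an infinite $\F_p$-space), so the argument must run entirely through definable ideals. The chain $\m_i\supseteq\m_i^2\supseteq\cdots$ stabilizes at $I:=\m_i^n=\m_i^{n+1}$, whence $\m_iI=I$ and $I^2=I$. If $I\neq 0$, a definable Nakayama argument gives a contradiction: by the descending chain condition choose a definable ideal $J$ minimal with $IJ\neq 0$; picking $a\in J$ with $Ia\neq 0$, minimality forces $J=(a)$ and then $Ia=(a)$ (using $I(Ia)=I^2a=Ia\neq 0$), so $(a)=Ia\subseteq\m_ia\subseteq(a)$ and $a=\mu a$ for some $\mu\in\m_i$; as $1-\mu$ is a unit in the local ring $R_i$, we get $a=0$, contradicting $Ia\neq 0$. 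Hence $\m_i^n=0$.

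Finally, the residue field $R_i/\m_i$ is interpretable in $R_i$ and therefore of finite Morley rank; by Macintyre's theorem an infinite field of finite Morley rank is algebraically closed, so $R_i/\m_i$ is finite or algebraically closed. Assembling the factors $R_i$ yields the stated decomposition of $R$ as a finite product of local rings of finite Morley rank with nilpotent maximal ideal and finite or algebraically closed residue field.
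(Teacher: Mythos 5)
The paper offers no proof of this statement at all: it is quoted as a Fact from Cherlin--Reineke \cite[Theorem 3.1]{CR76}, so there is nothing internal to compare against. Your reconstruction follows what is essentially the classical route of that paper --- finiteness of the Boolean algebra of idempotents via DCC on definable ideals, the unit-or-nilpotent dichotomy from stabilization of $(a)\supseteq (a^2)\supseteq\cdots$, a Nakayama argument run over definable ideals, and Macintyre's theorem \cite{aM71} for the residue fields --- and almost all of it checks out: the idempotent analysis is correct (an infinite Boolean algebra does contain an infinite strictly descending chain, via an infinite pairwise-disjoint family and complements of finite joins), the ideals $(a)$ and $Ia$ appearing in your Nakayama step are genuinely definable, and the converse via interpreting the product in the many-sorted disjoint union of the $R_i$ is fine.

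The one point you assert without justification, and which is where the real work hides, is the parenthetical claim that ideals generated by a definable set are definable --- specifically that the powers $\mathfrak{m}_i^k$ are definable, without which the chain $\mathfrak{m}_i\supseteq\mathfrak{m}_i^2\supseteq\cdots$ is not a chain of \emph{definable} subgroups and DCC does not apply to it. This is true in finite Morley rank rings but not for free: $\mathfrak{m}_i^k$ is an infinite sum of the subgroups $xa$-images, and groups of finite Morley rank can contain infinite strictly \emph{ascending} chains of definable subgroups (the Pr\"ufer group already does), so a naive ``take a finite subsum of maximal rank and degree'' argument does not close. The standard repair is Zilber's Indecomposability Theorem: writing $\mathfrak{m}_i=\mathfrak{m}_i^\circ+T$ with $T$ a finite transversal, each image $\mathfrak{m}_i^\circ a$ is definable and connected, hence indecomposable, so the subgroup generated by all of them is definable and a finite sum; adding the finitely many definable subgroups $t\,\mathfrak{m}_i^{k-1}$, $t\in T$, then gives definability of $\mathfrak{m}_i^k$ by induction on $k$. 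This is exactly the mechanism the paper itself invokes elsewhere, in the connected case, when it cites \cite[Lemma 7]{aN89} to get that $\mathfrak{m}^k$ is definable in the proof of Proposition \ref{P:AlmostStronglyMinimal}. With that lemma supplied (or cited), your argument is complete and matches the Cherlin--Reineke proof in substance.
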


 	In particular, a  commutative ring of finite Morley rank has only finitely many maximal ideals. We denote by $\Kk(R)$ the finite family of residue fields of $R$.
 	
 	Next, to ease notation, we reformulate part of Fact \ref{F:CR-FMR} empathising on definability aspects. In fact, these are implicitly given in the proof of \cite[Theorem 3.1]{CR76}. We give a direct proof for completeness.
 	
	\begin{lemma}\label{L:Def}
	Let $R$ be commutative unitary ring of finite Morley rank. For each field $K_i$ in $\Kk(R)$, there is an definable ring $R_i$ whose residue field is definably isomorphic to $K_i$ and $R$ is definably isomorphic to $R_1\times \ldots \times R_k$. 
\end{lemma}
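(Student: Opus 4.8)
The plan is to upgrade the purely abstract decomposition supplied by Fact \ref{F:CR-FMR} to a definable one, exploiting that such a product decomposition is governed by a system of idempotents, which are genuine elements of $R$ and hence available as parameters. Fact \ref{F:CR-FMR} provides a ring isomorphism $\psi\colon R \to R_1'\times \cdots \times R_k'$ onto a product of local rings whose residue fields are the members of $\Kk(R)$; setting $e_i := \psi^{-1}(0,\ldots,1,\ldots,0)$ (with the $1$ in position $i$) yields orthogonal idempotents of $R$ with $e_ie_j = 0$ for $i\neq j$ and $e_1+\cdots+e_k = 1$. (In fact $\{x\in R : x^2 = x\}$ is finite, since a commutative local ring has no idempotents besides $0$ and $1$, so these idempotents are even algebraic over $\emptyset$; but for the lemma we only need them as parameters.)

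I would then define $R_i := e_iR = \{x\in R : e_ix = x\}$, a definable ideal of $R$ which is a commutative ring with identity $e_i$ under the induced operations. The map
\[
\phi\colon R \to R_1\times \cdots \times R_k, \qquad r \mapsto (e_1r,\ldots,e_kr),
\]
is definable over $\{e_1,\ldots,e_k\}$, and a direct computation using $e_ie_j = \delta_{ij}e_i$ and $\sum_i e_i = 1$ shows it is a ring isomorphism: injectivity follows from $r = \sum_i e_ir$, surjectivity from the observation that a tuple $(s_1,\ldots,s_k)$ with $e_is_i = s_i$ is the image of $\sum_i s_i$, and multiplicativity from $e_ir\cdot e_is = e_i(rs)$. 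Thus $R$ is definably isomorphic to $R_1\times\cdots\times R_k$, which gives the product decomposition.

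It remains to check that each $R_i$ is a definable local ring whose residue field realises $K_i$. Since $e_iR = \psi^{-1}(R_i')$, each $R_i$ is abstractly isomorphic to the local factor $R_i'$, so it is local and, after matching indices, has residue field $K_i$. Because $R_i$ is local, its maximal ideal $\m_i$ coincides with the set of non-units $\{x\in R_i : \neg\exists y\in R_i\, (xy = e_i)\}$, which is definable over $e_i$; hence the residue field $R_i/\m_i$ is a definable (interpretable) field isomorphic to $K_i$, as required.

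The finite Morley rank hypothesis enters only through Fact \ref{F:CR-FMR}, which both furnishes the idempotents and guarantees that the factors are local; once these idempotents are in hand, the whole argument is elementary ring theory transported through the definable isomorphism $\phi$. The one point that needs a little care is the definability of the residue fields, and this is handled by the remark that in a local ring the maximal ideal is the definable set of non-units. I therefore regard this, rather than the product decomposition itself, as the mild crux of the argument.
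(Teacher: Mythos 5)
Your proof is correct and follows essentially the same route as the paper: both extract the coordinate idempotents from the abstract decomposition of Fact \ref{F:CR-FMR}, use multiplication by them to produce the definable factors, and recover definability of residue fields from the fact that the maximal ideal of a local ring is its definable set of non-units (the paper merely packages each factor as the definable quotient $R/\m_i^n$, which it notes is definably isomorphic to the image subring $u_iR$ --- your $e_iR$). The only point worth adding for completeness is the one line that the definable surjection $r\mapsto e_ir$ carries the maximal ideal $\m_i$ of $R$ onto that of $R_i$, so it induces the required \emph{definable} isomorphism $K_i=R/\m_i\to R_i/\m(R_i)$, rather than just an abstract one.
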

\begin{proof}
By Fact \ref{F:CR-FMR} we may assume that $R=R_1'\times \ldots\times R_k'$ where each $R_i'$ is a local ring whose maximal ideal is $\m_{i}'$. Set $u_i\in R$ to be the element with $1$ in the $i$th coordinate and $0$ elsewhere. The ring homomorphism $a\mapsto a\cdot u_i$ from $R$ to $R$ has as an image the subring 
\[
\{0\}\times\ldots \times \{0\} \times R'_{i} \times \{0\}\times \ldots \times \{0\},
\]
which is definable. Thus, any maximal ideal $\m_i$ of $R$, which is of the form 
\[
R'_1\times\ldots \times R'_{i-1} \times \m'_{i} \times R'_{i+1}\times \ldots \times R'_k,
\]
is definable since $\m'_{i}$ is the set of non-units of $R'_i$. Let $\m_1, \ldots,\m_k$ be the maximal ideals of $R$ and let $K_i=R/\m_i$. As each ideal $\m_{i}'$ is nilpotent there exists some $n\ge 1$ such that $\m_{i}'^n=\{0\}$ for every $i$. Consequently, we have that $\m_i^n$ is definable and $R/\m_i^n$ is definably isomorphic to the definable subring of $R$ corresponding to $R'_i$. Setting $R_i=R/\m_i^n$ we obtain the statement. %immediately see that $K_i$ is the residue field of $R_i$, and that $R$ is definably isomorphic to $R_1\times \ldots \times R_k$. 
 \end{proof}
For local rings with an infinite residue field we also have the following result \cite[Lemma 3 and 6]{aC77}. See also \cite[Corollary A.7]{BN94}.
\begin{fact}\label{F:Def}
Let $R$ be a commutative unitary local ring of finite Morley rank and assume that its residue field is infinite. Then, every non-null ideal of $R$ is infinite and finitely generated, so definable.
\end{fact}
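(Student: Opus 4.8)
The final statement to prove is Fact \ref{F:Def}, which asserts that in a commutative unitary local ring $R$ of finite Morley rank with infinite residue field, every non-null ideal is infinite, finitely generated, and hence definable. Although this is cited as a known fact, I will sketch how I would establish it.

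\medskip

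\textbf{Setup and strategy.} Let $R$ be such a ring with maximal ideal $\m$ and infinite residue field $K = R/\m$. By Fact \ref{F:CR-FMR} the ideal $\m$ is nilpotent, say $\m^n = \{0\}$ with $n$ minimal. The plan is to exploit the $K$-vector space structure on the successive quotients $\m^i/\m^{i+1}$, which are modules over the infinite field $K$, in order to control both the size and the generation of ideals. The key observation is that finite Morley rank forbids infinite-dimensional $K$-vector spaces from being definable, and conversely a nonzero $K$-subspace of a quotient of $R$ must be infinite because $K$ is infinite. First I would record that each quotient $\m^i/\m^{i+1}$ is a finite-dimensional $K$-vector space: it is an $R$-module annihilated by $\m$, hence a $K$-vector space, and its finite-dimensionality over $K$ follows from the fact that $R$ has finite Morley rank (an infinite-dimensional vector space over an infinite field cannot have finite Morley rank, by additivity of rank over a strictly increasing chain of subspaces).

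\medskip

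\textbf{Finiteness and finite generation.} Given a non-null ideal $I \subset R$, consider the filtration $I \supseteq I \cap \m \supseteq I \cap \m^2 \supseteq \cdots \supseteq I \cap \m^n = \{0\}$. Each factor $(I \cap \m^i)/(I \cap \m^{i+1})$ embeds as a $K$-subspace of the finite-dimensional space $\m^i/\m^{i+1}$, so it is a finite-dimensional $K$-vector space. Since $K$ is infinite, any nonzero such factor is infinite, which immediately gives that $I$ is infinite as soon as $I \neq \{0\}$. For finite generation, I would lift $K$-bases of the nonzero graded pieces to elements of $I$ and argue, using the nilpotence of $\m$ and a downward induction on the filtration (essentially the graded version of Nakayama's lemma in the nilpotent setting), that these finitely many lifts generate $I$ as an ideal. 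Concretely, choosing $i$ minimal with $I \cap \m^i \neq I \cap \m^{i+1}$ and proceeding up the filtration, the finitely many chosen generators suffice because each $\m^i/\m^{i+1}$ is finite-dimensional and $\m^n = \{0\}$ terminates the process.

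\medskip

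\textbf{Definability.} Once $I = (g_1,\ldots,g_r)$ is finitely generated, definability is routine: the ideal generated by finitely many elements is the image of the definable map $(a_1,\ldots,a_r) \mapsto \sum_{j} a_j g_j$ from $R^r$ to $R$, and images of definable maps are definable. I would state this last step briefly since it is formally immediate.

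\medskip

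\textbf{Main obstacle.} The crux of the argument is the finite generation step, specifically justifying that lifts of bases of the graded pieces $\m^i/\m^{i+1}$ genuinely generate $I$ as an ideal rather than merely as a subgroup. The clean way around this is to use the nilpotence of $\m$ together with the fact that we are working with an honest filtration by powers of the maximal ideal, so that the associated graded module $\bigoplus_i (I\cap\m^i)/(I\cap\m^{i+1})$ is a finitely generated module over the associated graded ring, from which finite generation of $I$ itself follows by the standard lifting argument valid in the nilpotent (hence complete-for-the-$\m$-adic-topology) setting. The finite-dimensionality of each graded piece, which rests on the finite Morley rank hypothesis and the infinitude of $K$, is what makes this work and is the point where the model-theoretic assumption is genuinely used.
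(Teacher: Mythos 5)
The paper itself does not prove this statement: it is quoted verbatim as a fact from \cite{aC77} (Lemmas 3 and 6; see also \cite[Corollary A.7]{BN94}), so your sketch can only be measured against the standard argument. Your outline has most of the right ingredients, but it contains one genuine circularity that you never address. To make "finite-dimensionality of $\m^i/\m^{i+1}$ follows from finite Morley rank" meaningful, the quotient $\m^i/\m^{i+1}$, its $K$-vector space structure, and the subspaces in your increasing chain must be \emph{interpretable}; Morley rank is not defined for abstract quotients. That requires the powers $\m^i$ to be definable, which is not automatic ($\m^i$ is the set of finite sums of $i$-fold products, with no a priori bound on the number of summands) and is in fact part of the very statement being proved: the paper deduces definability of the $\m^i$ \emph{from} this Fact in Lemma \ref{L:CR-Local}, and where the Fact is unavailable (finite residue field) it must invoke Zilber's indecomposability theorem via \cite[Lemma 7]{aN89} to recover it. So, as written, your key finite-dimensionality step presupposes a special case of the conclusion. (Your "main obstacle" paragraph worries about the wrong step: the lifting/Nakayama argument is routine once $\m$ is nilpotent; the definability of the filtration is the real issue.)

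The gap is reparable, and the repair also shortens the proof by discarding the graded pieces entirely. Work directly with finitely generated ideals of $R$: for $a\in R$ the principal ideal $Ra$ is the image of the definable additive endomorphism $x\mapsto xa$, so every finitely generated ideal $Ra_1+\cdots+Ra_r$ is definable, with no prior knowledge about $\m^i$. If some ideal $I$ were not finitely generated, choose inductively $a_{j+1}\in I\setminus I_j$ where $I_j=Ra_1+\cdots+Ra_j$, producing a strictly increasing chain of definable ideals. Each quotient $I_{j+1}/I_j$ is a nonzero cyclic $R$-module $R/J$ with $J$ proper, hence $J\subset\m$ because $R$ is local, so $I_{j+1}/I_j$ surjects onto the infinite field $K=R/\m$ and the index $[I_{j+1}:I_j]$ is infinite. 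Therefore the Morley rank of $I_j$ strictly increases with $j$, contradicting finite Morley rank after finitely many steps. This yields finite generation, hence definability, of all ideals; and the same cyclic-module observation applied to $Ra\cong R/\ann_R(\{a\})$ for $a\neq 0$ shows every nonzero ideal is infinite. Note this route needs neither the nilpotence of $\m$ nor any graded structure. Your graded/Nakayama argument does go through once definability of the $\m^i$ is supplied (e.g. by the indecomposability theorem), but it is strictly heavier than necessary.
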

%\begin{proof} 
%	Let $\A$ be a non-null ideal of $R$. We first prove that $\A$ is infinite. Fix some element $a\in \A\setminus\{0\}$. Since $R/\ann(a)$ is infinite and in bijection with the principal ideal $(a)\subset \A$, we have that $\A$ is infinite. 
%	Once we have seen that every non-null ideal is infinite, note that this is also true of any quotient given by a definable ideal $\B\subset R$, since the ring $R/\B$ satisfy the same assumptions. Hence, as $R$ has finite Morley rank, it follows that an ascending chain of finitely generated ideals stabilises after finitely many steps, which yields that every non-null ideal of $R$ must be finitely generated.
%\end{proof}

The statement above might not hold if the residue field is finite. 

\begin{example}\label{Ex:1}
Consider the ring of polynomials $\F_p[(y_i)_{i\in \R}]$ with coefficients in $\F_p$ and on infinitely many indeterminates. Let $R=\F_p[(y_i)_{i\in \R}]/(y_iy_j)_{i,j}$. It follows that $R$ is a commutative unitary local ring which is  totally categorical, since the product is definable from the $\F_p$-vector space structure on $R$. The maximal ideal $\m$ of $R$ is not finitely generated. Also, any proper infinitely generated ideal, other than $\m$, is not definable. %On the other hand, notice that $R$ is locally modular, so the CBP holds.  
\end{example}
In general, we will see that the situation drastically differs depending on the infinitude of the residue field, as well as the characteristic of the ring. Consequently, we shall distinguish these two distinct cases. 

\subsection{Infinite residue field} Local commutative unitary rings of finite Morley rank are  uncountably categorical whenever the residue field is infinite, see \cite[Theorem 3.2 and 3.3]{CR76} and \cite{aC77}. The following yields a model-theoretic proof of this, and provides some further information. 
			
\begin{lemma}\label{L:CR-Local}
Let $R$ be a commutative unitary local ring of finite Morley rank, possibly with additional structure. If its residue field $K$ is infinite, then the field $K$ is an almost strongly minimal definable set and the maximal ideal $\m$ is $K$-internal. In particular, the ring $R$ is $K$-analysable in at most two steps. 
\end{lemma}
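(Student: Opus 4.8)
The plan is to establish the two assertions separately and then read off the analysis of $R$. Since the residue field $K=R/\m$ is infinite and of finite Morley rank, Fact~\ref{F:CR-FMR} tells us that it is algebraically closed and that $\m$ is nilpotent; these are the two structural inputs I would use.

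I would first prove that $K$ is almost strongly minimal. As $K$ is an infinite definable set of finite Morley rank it contains a strongly minimal definable subset, which after a translation may be taken to contain $0$; call it $D$ and put $X=D\cup(-D)\cup\{0\}$, a symmetric rank one definable set containing the identity. By the chain condition on Morley rank the ascending sumsets $X\subseteq X+X\subseteq\cdots$ stabilise, so the additive subgroup $S=\langle X\rangle$ is a finite sumset of $X$, hence definable and internal to $D$; passing to the connected component $S^{\circ}$ preserves definability, infiniteness and $D$-internality. Now $\{\lambda S^{\circ}:\lambda\in K^{\times}\}$ is a family of connected, hence indecomposable, definable subgroups of $(K,+)$, and the subgroup they generate is closed under multiplication by every scalar, so it is a nonzero ideal of the field $K$ and therefore equals $K$. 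By Zilber's indecomposability theorem we get $K=\lambda_{1}S^{\circ}+\cdots+\lambda_{r}S^{\circ}$ for finitely many $\lambda_{i}\in K^{\times}$; since each summand is a definable bijective image of the $D$-internal group $S^{\circ}$, the field $K$ is internal to the strongly minimal set $D$, and in particular almost strongly minimal.

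Next I would show that $\m$ is $K$-internal. Because $K$ is infinite, Fact~\ref{F:Def} ensures that every ideal of $R$ is definable and finitely generated; write $\m=Ra_{1}+\cdots+Ra_{n}$, and note that all powers $\m^{i}$ are definable. Nilpotency of $\m$ yields a finite filtration $\m\supseteq\m^{2}\supseteq\cdots\supseteq\m^{N}=\{0\}$ whose successive quotients $\m^{i}/\m^{i+1}$ are finitely generated modules over $R/\m=K$, that is finite-dimensional $K$-vector spaces; each such quotient is therefore $K$-internal. This already exhibits $\m$ as $K$-analysable, and the heart of the matter is to promote this to genuine $K$-internality. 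For that I would use finite generation directly: the definable surjection $R^{n}\to\m$, $(r_{1},\dots,r_{n})\mapsto\sum_{i}r_{i}a_{i}$, has to be trimmed to a definable surjection from a Cartesian power of $K$, by controlling the dependence of each $r_{i}a_{i}$ on $r_{i}$ modulo the annihilator tails that multiplication by $a_{i}$ kills along the filtration. Carrying out this replacement of the ambient ring $R$ by finitely many copies of $K$ in the description of $\m$ is, I expect, the main obstacle.

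Finally, granting that $\m$ is $K$-internal, the chain $R\unrhd\m\unrhd\{0\}$ of definable subgroups of the additive group of $R$ has both $R/\m\cong(K,+)$ and $\m$ being $K$-internal. By the criterion for analysability of definable groups recalled after Fact~\ref{F:Internal}, the additive group of $R$ is $K$-analysable in at most two steps, which is the remaining assertion.
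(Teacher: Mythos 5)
You never prove the central assertion that $\m$ is $K$-internal: you correctly reduce everything else to it (the filtration $\m\supset\m^2\supset\cdots$ with finite-dimensional $K$-vector spaces as quotients gives $K$-analysability, and your final paragraph is routine), you call the promotion of analysability to internality ``the heart of the matter'', and then you leave it as ``the main obstacle''. Since analysability is immediate, and since almost strong minimality of $K$ is exactly the fact the paper simply quotes from \cite[Lemma 3]{fW01} (your Zilber-indecomposability sketch is essentially Wagner's argument, except that ascending sumsets do not stabilise ``by the chain condition on Morley rank'' --- bounded rank does not stop an increasing chain of definable sets; one should apply Zilber's indecomposability theorem directly), the deferred step is the entire content of the lemma, so the proposal does not prove it.

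The gap is genuine and not mere bookkeeping. In your proposed trimming of the surjection $R^{n}\to\m$, $(r_{1},\ldots,r_{n})\mapsto\sum_{i}r_{i}a_{i}$, replacing a coordinate $r_{i}$ by its residue $\bar r_{i}\in K$ changes $r_{i}a_{i}$ by an arbitrary element of $\m a_{i}$, and resolving this ambiguity definably amounts to having a definable section of $R\to K$; nothing in the hypotheses hands you one. Indeed, an extension of a $K$-internal group by a $K$-internal group need not be $K$-internal: for the extension $R$ of $K$ by $\m$ this failure is precisely Theorem \ref{T:Charac0}, and it is the theme of the whole paper. The paper's own proof localises the work differently: it recurses downward along the powers of $\m$, starting from the last nonzero power, which is an honest definable $K$-vector space because $\m$ annihilates it, and asserting at each stage that $\m^{i}$ is $\{\m^{i+1},K\}$-internal. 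Be aware that this recursive step itself needs an input of the same kind: in positive characteristic the definable multiplicative representatives of Fact \ref{F:MultRep} supply the required section (this is how Theorem \ref{T:Charac+} uses the lemma), whereas in characteristic $0$ derivation-twisted automorphisms in the style of Lemma \ref{L:Aut} (for $\C[x]/(x^{3})$: the map determined by $c\mapsto c+\delta(c)x+\tfrac{1}{2}\delta^{2}(c)x^{2}$ on constants and $x\mapsto x$, which fixes $K$, $\m^{2}$ and any prescribed small parameter set pointwise yet moves $cx$ whenever $\delta(c)\neq 0$) show that internality of $\m$ cannot follow from internality of the graded pieces alone. So your instinct about where the difficulty sits is sound, but the argument you would need there is exactly what is missing.
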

\begin{proof} Let $\m$ denote the maximal ideal of $R$, which is nilpotent by Fact \ref{F:CR-FMR}. Let $n\ge 1$ be the least natural number such that $\m^n=\{0\}$ and note that each ideal $\m^i$ is definable by Fact \ref{F:Def}. Thus, each definable quotient $\m^i/\m^{i+1}$ is a finite dimensional vector space over the definable infinite field $K=R/\m$, with scalar multiplication given by $\bar k\cdot (x + \m^{i+1})= k\cdot x + \m^{i+1}$. So, since $R$ has
	finite Morley rank, these vector spaces must be finite dimensional. Hence, we see that $\m^i$ is $\{\m^{i+1},K\}$-internal, which yields recursively on $1\le i<n$ that $\m$ is $K$-internal, so $R$ is $K$-analysable in two steps. On the other hand, as $K$ is an infinite field definable in a finite Morley rank structure, it is almost strongly minimal by \cite[Lemma 3]{fW01}.\end{proof}

As a corollary, we immediately obtain the following:
\begin{cor}\textnormal{(\cf \cite[Theorem 3.2 and 3.3]{CR76})}
Let $R$ be a commutative unitary local ring of finite Morley rank with an infinite residue field. Its theory is uncountably categorical and its residue field is the unique  almost strongly minimal definable set up to non-orthogonality.
\end{cor}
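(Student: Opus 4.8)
The plan is to read both assertions off Lemma~\ref{L:CR-Local}, which provides an infinite almost strongly minimal definable field $K=R/\m$ such that the ring $R$ is $K$-analysable in at most two steps; consequently every type realised in $R^{\mathrm{eq}}$ is $K$-analysable. Since $R$ has finite Morley rank its theory is $\omega$-stable, so both conclusions will follow once I establish that \emph{every non-algebraic type is non-orthogonal to $K$}. Indeed, this is exactly unidimensionality, which for a countable $\omega$-stable theory yields uncountable categoricity \cite[Chapter~6]{TZ12}, and it says that $K$, being itself almost strongly minimal, is the unique almost strongly minimal definable set up to non-orthogonality.

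To control the non-orthogonality bookkeeping I would fix a strongly minimal set $D$ to which $K$ is almost internal, so that $K\not\perp D$ and, since internality composes with analysability, every type is $D$-analysable. The crux is then that a non-algebraic $D$-analysable type $p$ satisfies $p\not\perp D$. Rather than invoking transitivity of non-orthogonality, which fails in general, I would argue with the $D$-weight $w_D$: it is subadditive along the $\dcl$-sequence of an analysis, and a $D$-internal type of $D$-weight $0$ is algebraic. Hence, were $p$ orthogonal to $D$, every layer of its analysis would be $D$-internal of $D$-weight $0$, thus algebraic, forcing $p$ itself to be algebraic, a contradiction.

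With this in hand the two statements follow. Every non-algebraic type $p$ satisfies $p\not\perp D$, and since $K$ is non-algebraic and almost $D$-internal, $p\not\perp D$ is equivalent to $p\not\perp K$; thus all non-algebraic types are non-orthogonal to $K$, giving unidimensionality and hence uncountable categoricity. Moreover any almost strongly minimal definable set is non-orthogonal to the strongly minimal set it is almost internal to, hence to $D$, and therefore to $K$ by the same equivalence; as $K$ is itself almost strongly minimal, it is the unique almost strongly minimal definable set up to non-orthogonality.

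I expect the main obstacle to be precisely the middle step: making precise that $K$-analysability of the home sort passes to all types in $R^{\mathrm{eq}}$, and running the $D$-weight computation so as to sidestep the non-transitivity of orthogonality. Everything else is a routine assembly of standard facts from geometric stability theory once non-orthogonality to $K$ has been secured.
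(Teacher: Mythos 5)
Your proposal is correct and follows essentially the same route as the paper: the paper's proof is a two-line deduction from Lemma~\ref{L:CR-Local} (pass to a saturated model; $K$-analysability with $K$ almost strongly minimal gives unidimensionality, hence uncountable categoricity and uniqueness up to non-orthogonality). You have merely made explicit the folklore step the paper leaves implicit --- that a non-algebraic $D$-analysable type is non-orthogonal to the strongly minimal $D$, via additivity of $D$-weight along the analysis --- and that argument is sound.
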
\begin{proof} Assume that $R$ is saturated. By Lemma \ref{L:CR-Local}, the infinite residue field is the unique almost strongly minimal definable set up to non-orthogonality, so $R$ is unidimensional and consequently it is uncountably categorical. 
\end{proof}

Before proceeding and stating the main results concerning commutative (local) rings with infinite residue fields, we clarify that in this context the CBP is equivalent to almost $\Pp$-internality:

\begin{lemma}\label{L:Equivalence}
Let $R$ be a saturated commutative unitary ring of finite Morley rank and assume that every field in $\Kk(R)$ is infinite. Then, the fields in $\Kk(R)$ are the unique almost strongly minimal sets of $R$ up to non-orthogonality. Furthermore, the following are equivalent:
\begin{enumerate}
	\item The theory of $R$ satisfies the CBP.
	\item The ring $R$ is almost $\Kk(R)$-internal.
	\item Every definable (interpretable) commutative unitary local ring in $R$ with an infinite residue field is almost strongly minimal in the theory of $R$.
\end{enumerate} 
In particular, if $R$ is a local ring, then the CBP holds if and only if $R$ is almost strongly minimal with respect to its residue field.
\end{lemma}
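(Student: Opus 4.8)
The plan is to first put $R$ into the shape supplied by Lemma~\ref{L:Def}, writing $R\cong R_1\times\dots\times R_k$ with each $R_i$ a definable local ring whose residue field $K_i\in\Kk(R)$ is infinite, hence algebraically closed by Fact~\ref{F:CR-FMR}. Two structural inputs from Lemma~\ref{L:CR-Local} will be used throughout: each $K_i$ is almost strongly minimal, witnessed by a \emph{non-modular} strongly minimal set $D_i\in\Pp$ (non-modularity because $K_i$ is an infinite field), and each $R_i$, hence $R$ itself, is $\Kk(R)$-analysable (equivalently $\{D_i\}$-analysable, since $K_i$ is almost $D_i$-internal). For the preliminary non-orthogonality assertion I would argue that, $R$ being analysable to the strongly minimal family $\{D_i\}$, every strongly minimal set realised in $R^{\mathrm{eq}}$ is non-orthogonal to some $D_i$: the first non-trivial step of such an analysis of a generic point of a strongly minimal set produces a rank-one quotient internal to $\{D_i\}$ and interalgebraic with it over the base, forcing non-orthogonality to some $D_i$, hence to $K_i$. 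An almost strongly minimal definable set $X$ is non-orthogonal to its witnessing strongly minimal set, and that set is non-orthogonal to some $D_i$; conversely each $K_i$ is itself almost strongly minimal, giving uniqueness up to non-orthogonality.

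For the equivalences the substantive implication is $(1)\Rightarrow(2)$. Here the key observation is that each $R_i$ is \emph{unital}, so $\ann(R_i)=\{0\}$ (as $r\cdot 1=r$), and its additive group is connected, being filtered by the powers of its nilpotent maximal ideal $\m_i$ with quotients $R_i/\m_i=K_i$ and $\m_i^{\,j}/\m_i^{\,j+1}$ finite-dimensional $K_i$-vector spaces, all of which have connected additive group. Hence Proposition~\ref{P:Ann}, applicable since the theory of $R$ satisfies the CBP and $R_i$ is definable in $R$, yields at once that $R_i=R_i/\ann(R_i)$ is almost $\Pp$-internal; taking the finite product, $R$ is almost $\Pp$-internal. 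As $R$ is moreover $\Kk(R)$-analysable and $\Pp$ is a family of strongly minimal sets, Fact~\ref{F:Internal} (with $\Pp_0=\Kk(R)$, $\Pp_1=\Pp$) upgrades this to almost $\Kk(R)$-internality, which is $(2)$.

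The remaining implications are bookkeeping with Fact~\ref{F:Internal}. For $(2)\Rightarrow(3)$, given an interpretable commutative unitary local ring $S$ with infinite residue field $K_S$, Lemma~\ref{L:CR-Local} makes $K_S$ almost strongly minimal and $S$ being $\{K_S\}$-analysable; since $S$ is definable in $R$, statement $(2)$ makes $S$ almost $\Kk(R)$-internal and hence almost $\Pp$-internal, so Fact~\ref{F:Internal} gives $S$ almost $K_S$-internal and thus almost strongly minimal. For $(3)\Rightarrow(2)$, apply $(3)$ to each factor $R_i$: it is almost strongly minimal, i.e. almost internal to a strongly minimal $D_i$, and being $\{K_i\}$-analysable, Fact~\ref{F:Internal} yields $R_i$ almost $K_i$-internal, whence $R$ is almost $\Kk(R)$-internal. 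Finally $(2)\Rightarrow(1)$: from $(2)$ we get $R$ almost $\Pp$-internal as above, and since for tuples $a,b$ from $R$ the canonical base $\cb(\stp(a/b))$ lies in $\dcl^{\mathrm{eq}}$ of a finite Morley sequence of realisations of $\tp(a/b)$ — all tuples from the almost $\Pp$-internal set $R$ — the type $\tp(\cb(\stp(a/b))/a)$ is almost $\Pp$-internal, i.e. the CBP holds. The \emph{in particular} clause is the case $\Kk(R)=\{K\}$: by $(1)\Leftrightarrow(2)$ the CBP is equivalent to $R$ being almost $K$-internal, and, $K$ being almost strongly minimal while $R$ is $\{K\}$-analysable, Fact~\ref{F:Internal} makes this in turn equivalent to $R$ being almost strongly minimal with respect to its residue field.

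The main obstacle is $(1)\Rightarrow(2)$, namely extracting almost $\Pp$-internality of $R$ from the CBP. The decisive simplification is that unitality forces the annihilator of each local factor to vanish, so Proposition~\ref{P:Ann} applies verbatim and delivers almost $\Pp$-internality directly; the only points that need care are the connectivity of the additive groups of the $R_i$ (required by Proposition~\ref{P:Ann}) and the repeated conversion between $\Pp$-internality and $\Kk(R)$-internality, both of which are routine once the $\Kk(R)$-analysability of $R$ from Lemma~\ref{L:CR-Local} is combined with Fact~\ref{F:Internal}.
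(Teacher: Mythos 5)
Your proposal is correct, and it runs on the same engine as the paper's proof (Lemma \ref{L:Def} for the product decomposition, Lemma \ref{L:CR-Local} for analysability, Proposition \ref{P:Ann} for the CBP-to-internality step, and Fact \ref{F:Internal} for the conversions), but you arrange the cycle of implications differently, and this changes where the real work happens. The paper's substantive implication is $(1)\Rightarrow(3)$: it applies Proposition \ref{P:Ann} to an \emph{arbitrary} definable local ring $R_0$ with infinite residue field, and to get the connectedness hypothesis there it invokes Zilber's Indecomposability Theorem (via \cite{aN89}) to see that $R_0^\circ$ is an ideal, hence equals $R_0$ since a local ring with infinite residue field has no proper finite-index ideal. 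You instead make $(1)\Rightarrow(2)$ the substantive step, applying Proposition \ref{P:Ann} only to the canonical factors $R_i$, whose additive connectedness you get from the filtration by the (definable, by Fact \ref{F:Def}) powers of the maximal ideal, with quotients that are finite-dimensional vector spaces over the algebraically closed residue field; the arbitrary interpretable local ring is then handled in $(2)\Rightarrow(3)$ by pure bookkeeping, since interpretable sets inherit almost $\Kk(R)$-internality for free and Fact \ref{F:Internal} converts this to almost $K_S$-internality. The net effect is that your route avoids Zilber's Indecomposability Theorem entirely at the cost of a (routine, but correct) connectedness argument for the factors; the paper's route is shorter at that point because connectedness of $R_0$ comes in one line from indecomposability plus locality. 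Both arguments are complete at the same level of rigor, including your justification of $(2)\Rightarrow(1)$ via canonical bases lying in $\dcl^{\mathrm{eq}}$ of Morley sequences of real tuples, which the paper dismisses as obvious.
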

\begin{proof}
By Lemma \ref{L:Def}, the ring $R$ is definably isomorphic to $R_1\times \ldots \times R_k$, where each $R_i$ is a definable commutative unitary local ring whose residue field is definably isomorphic to some field $K_i\in \Kk(R)$. By Lemma \ref{L:CR-Local}, the infinite field $K_i$ is almost strongly minimal (and non locally modular). Furthermore, every ring $R_i$ is $K_i$-analysable in two steps, so $R$ is $\Kk(R)$-analysable in two steps as well. This implies that $K_1,\ldots,K_k$ are the unique almost strongly minimal sets up to non-orthogonality. Hence, being almost $\Pp$-internal and almost $\Kk(R)$-internal are equivalent. This shows the first part of the statement.

Now, the implication that (2) implies (1) is  obvious. Also, assuming (3) we have by the first part of the statement that each ring $R_i$ is almost $\Kk(R)$-internal, so $R$ is almost $\Kk(R)$-internal. 

It remains to prove that (1) implies (3). Suppose that $R$ satisfies the CBP and let $R_0$ be a definable commutative unitary local ring with an infinite residue field. By Zilber's Indecomposability Theorem (see \cite[Lemma 1]{aN89}) the subgroup $R_0^\circ$ is an ideal, so $R_0$ is connected by assumption. Hence, Proposition \ref{P:Ann} applied to $R_0$ yields that $R_0$ is almost $\Kk(R)$-internal. Also, by Lemma \ref{L:CR-Local}, the ring $R_0$ is analysable to its residue field $K$, which is almost strongly minimal. So, $R_0$ is almost strongly minimal by Fact \ref{F:Internal}.
\end{proof}

In view of Lemma \ref{L:Equivalence}, we shall focus on commutative unitary local rings of finite Morley rank. We treat first the characteristic $0$ case, where we see that there are plenty of non almost strongly minimal structures. In fact, the first natural occurring example of an uncountably categorical but not almost strongly minimal structure, due to Podewski and Reineke \cite{PR74}, is the local ring $\C[x]/(x^2)$. Hence, the next theorem generalises their result.

\begin{theorem}\label{T:Charac0}
Let $R$ be a commutative local ring with a unit and of finite Morley rank. Assume that it has characteristic $0$. If $R$ is not a field, then it is not almost strongly minimal nor has the CBP.
\end{theorem}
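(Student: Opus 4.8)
The plan is to mimic the structure of the proof of Theorem~\ref{T:Lie}, exploiting the key fact from Lemma~\ref{L:Equivalence} that in characteristic $0$ the CBP for a local ring is equivalent to almost strong minimality with respect to its residue field. So it suffices to produce, under the assumption that $R$ is almost strongly minimal, an automorphism that has an element with infinite orbit while fixing the relevant parameters, contradicting almost strong minimality. Since $R$ has characteristic $0$ and is not a field, its maximal ideal $\m$ is a non-trivial nilpotent ideal; by Fact~\ref{F:CR-FMR} the residue field $K=R/\m$ is infinite (it cannot be finite, for a finite field has positive characteristic and $R$ surjects onto $K$), hence algebraically closed and uncountable after passing to a saturated model.

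First I would reduce to the triangular-ring framework. The natural candidate is to find a copy of $\Lambda(K,V)$ inside $R$, where $V$ is a suitable quotient of powers of $\m$ viewed as a $K$-vector space. Concretely, let $n$ be least with $\m^n=\{0\}$; then $\m^{n-1}$ is a non-zero $K$-vector space annihilated by $\m$, so it sits inside $\ann(R)$. The idea is to build a non-trivial additive derivation-like map $\delta:R\to \ann(R)$ of the form $\delta(r)=\delta_0(\pi(r))\cdot x_0$, where $\pi:R\to K$ is the residue projection (which is a ring homomorphism), $\delta_0$ is a non-trivial derivation of $K$ vanishing on the countable subfield generated by the coordinates of a given finite parameter set, and $x_0\in\m^{n-1}\setminus\{0\}$. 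Because $x_0\in\ann(R)$ and $\pi$ is multiplicative, one checks that $\delta(rr')=\delta_0(\pi(rr'))x_0=(\pi(r)\delta_0(\pi(r'))+\delta_0(\pi(r))\pi(r'))x_0=r\cdot\delta(r')+\delta(r)\cdot r'$, using that $\pi(r)\cdot(\lambda x_0)=\lambda\,\pi(r)\cdot x_0$ reduces multiplication by $r$ to multiplication by the scalar $\pi(r)$ on $\ann(R)$. This yields the Leibniz-type identity needed to invoke Lemma~\ref{L:Aut} (applied with $R$ in the role of both factors, or by realising $R$ as the relevant triangular ring), producing an automorphism $\sigma$ of $R$ fixing $\ann(R)$ pointwise and fixing the chosen parameters.

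Next I would run the orbit argument exactly as in Theorem~\ref{T:Algebra}. Assuming $R$ satisfies the CBP, Lemma~\ref{L:Equivalence} gives that $R$ is almost strongly minimal, hence almost internal to $K$; pick a finite parameter set $A_0$ and finitely many realisations witnessing this. Choose $\delta_0$ vanishing on the (countable, algebraically closed) subfield generated by the residues of $A_0$, using the existence of such derivations on an uncountable algebraically closed field of characteristic $0$ (as cited via \cite{ZS58} or \cite{aN94}). Then $\sigma$ fixes $A_0$ and $\ann(R)\supset K\cdot x_0$ pointwise. Taking an element $r$ whose residue $\alpha=\pi(r)$ satisfies $\delta_0(\alpha)\neq 0$, the iterates give $\sigma^{i}(r)=r+i\cdot\delta_0(\alpha)x_0$, and since $R$ has characteristic $0$ these are pairwise distinct (as $i\delta_0(\alpha)x_0=0$ would force $x_0=0$). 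Hence $r\notin\acl(A_0,\text{witnesses})$, contradicting almost internality to $K$.

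The main obstacle I anticipate is verifying the Leibniz identity in the genuinely non-semisimple setting, where $R$ is not literally $\Lambda(K,V)$ but a local ring in which the residue projection $\pi$ need not split as a ring section. The clean computation above relies on the fact that multiplication by $r$ acts on $\ann(R)$ through the scalar $\pi(r)$, i.e. that $r\cdot x=\pi(r)\cdot x$ for $x\in\ann(R)$; this holds because $(r-\pi(r))\in\m$ annihilates $\m^{n-1}$, but one must confirm $\ann(R)$ is contained in the part killed by $\m$ and identify the $K$-module structure carefully, since in characteristic $0$ the section $K\hookrightarrow R$ exists (as $\Q\subset R$ and $K$ is algebraic over $\Q$ one still needs a genuine splitting, e.g.\ via Hensel/Cohen structure theory or directly since $R$ is a finite-dimensional $K$-algebra by Lemma~\ref{L:CR-Local}). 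I expect this structural point—realising $R$ as a finite-dimensional local $K$-algebra so that $\pi$ admits a coefficient field and $\delta$ is well-defined—to be the step requiring the most care, after which the automorphism and orbit argument is routine.
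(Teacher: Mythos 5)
Your proposal is correct and follows essentially the same route as the paper: the paper's proof invokes Cohen's structure theorem to produce a coefficient field $k$ with $R=k\oplus \m$, identifies $R$ with the triangular ring $\Lambda(k,\m)$, and then quotes Theorem \ref{T:Algebra} (applicable since $\m^{n-1}\subseteq \ann(\m)\neq\{0\}$) together with Lemma \ref{L:Equivalence} --- exactly the derivation--automorphism--orbit argument you carry out by hand, and the structural point you flag as delicate is resolved by the Cohen splitting precisely as you anticipate (in fact your direct computation already works without a section, because $\m\cdot\m^{n-1}=\{0\}$ makes the $K$-action on $\m^{n-1}$ canonically well defined via arbitrary lifts). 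Three small corrections: the target of $\delta$ must be $\ann(\m)$, not $\ann(R)$, which is trivial since $R$ is unital; the residue field is infinite because $\operatorname{char}(K)=p>0$ would put $p\cdot 1$ in the nilpotent ideal $\m$ and force $R$ to have characteristic a power of $p$ (mere surjectivity onto a finite field does not by itself contradict characteristic $0$); and $K$, being uncountable and algebraically closed, is certainly not algebraic over $\Q$.
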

\begin{proof}
Let $R$ be a commutative local unitary ring of finite Morley rank, which has characteristic $0$. Assume, as we may, that $R$ is uncountable and hence saturated.
 
We see that $R$ is indeed a triangular ring. By Fact \ref{F:CR-FMR}, the maximal ideal $\m$ of $R$ is nilpotent, so the residue field $K=R/\m$ must have characteristic $0$ as well. By Cohen Structure Theorem \cite[Theorem 9]{iC46} there exists a subfield $k\subset R$ which maps isomorphically onto $K$ via the natural homomorphism $k\hookrightarrow R \twoheadrightarrow K$. It is clear that $k\cap \mathfrak{m}=\{0\}$. Moreover, given an arbitrary element $a\in R$, there exists some $b\in k$ such that $a+\m  = b+\m$, so  $a=(a-b) + b \in\mathfrak{m}+ k$ and therefore $R=k\oplus \m$. As a consequence, we obtain that $R$ is (isomorphic to) the triangular ring $\Lambda(k,\m)$. Furthermore, the ring $\m$ is a vector space over $k$ and nilpotent. Thus, Theorem \ref{T:Algebra} yields that $R\cong \Lambda(k,\m)$ does not have the CBP nor is almost strongly minimal, by Lemma \ref{L:Equivalence}.
\end{proof}

\begin{remark}
	Note that the proof above gives also an alternative proof to the one given in \cite{PR74} for the local ring $\C[x]/(x^2)$.
\end{remark}
Now, we consider the positive characteristic case. Before proceeding we briefly recall the notion of {\em multiplicative representative}.
\begin{definition}
Let $R$ be a commutative unitary local ring with a residue field $K$ of characteristic $p>0$. A {\em multiplicative representative} of an element $\alpha\in K$ is an element $a\in R$ such that $\overline a=\alpha$ and for every $n\ge 1$ there exists some $b_n\in R$ such that $a=b_n^{p^n}$.
\end{definition}
The following is \cite[Lemma 9]{aC77}, see also \cite[Lemma 7]{iC46} or \cite[31.3]{mN62}.
\begin{fact}\label{F:MultRep}
Let $R$ be a commutative unitary local ring with a perfect residue field $R/\m$ of characteristic $p>0$ with  $\m^n=\{0\}$ for some $n\ge 1$. Every element of $R/\m$ has a unique multiplicative representative and the set $X$ of multiplicative representatives is 
\[
X=\left\{ a\in R \ | \ a= b^{p^{n}} \text{ for some $b\in R$}\right\}.
\]
\end{fact}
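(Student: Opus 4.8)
The plan is to reconstruct the classical Teichm\"uller (multiplicative) lifting, specialised to the setting where the maximal ideal is nilpotent. Since $K=R/\m$ has characteristic $p$, the element $p\cdot 1_R$ lies in $\m$, whence $p^n=(p\cdot 1_R)^n\in\m^n=\{0\}$; this is the only feature of the characteristic I will use. Perfectness of $K$ enters through the remark that for each $\alpha\in K$ and each $m\ge 1$ there is a unique $p^m$-th root $\alpha^{p^{-m}}\in K$. First I would fix an arbitrary set-theoretic lift of each such root to $R$ and raise it to a high power; the point is that this stabilises once the exponent outgrows the nilpotency index.

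The technical heart is the following congruence, which I would prove by the binomial theorem: for $k\ge 1$, if $x-y\in\m^k$ then $x^p-y^p\in\m^{k+1}$. Indeed, writing $x=y+t$ with $t\in\m^k$, the cross terms $\binom{p}{i}y^{p-i}t^i$ with $1\le i\le p-1$ lie in $\m^{k+1}$ because $p\mid\binom{p}{i}$ puts the coefficient in $\m$ while $t^i\in\m^{ik}\subseteq\m^k$, whereas the final term $t^p\in\m^{kp}$ sits in $\m^{k+1}$ since $k(p-1)\ge 1$. Iterating this $m$ times starting from $x-y\in\m$ yields $x^{p^m}-y^{p^m}\in\m^{m+1}$, so that whenever $\bar x=\bar y$ and $m\ge n-1$ we obtain the exact equality $x^{p^m}=y^{p^m}$. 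This is precisely the rigidity needed to make the lifting well defined.

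With this in hand I would define, for $\alpha\in K$ and any fixed $m\ge n-1$, the element $\tau(\alpha):=c^{p^m}$, where $c\in R$ is an arbitrary lift of $\alpha^{p^{-m}}$. The congruence shows $\tau(\alpha)$ is independent of the chosen lift, and comparing exponents shows it is independent of $m$ as well. One then checks routinely that $\overline{\tau(\alpha)}=\alpha$, that $\tau(\alpha\beta)=\tau(\alpha)\tau(\beta)$ using commutativity and $(cd)^{p^m}=c^{p^m}d^{p^m}$, and that $\tau(\alpha)=\big(c^{p^{m-k}}\big)^{p^k}$ exhibits it as a $p^k$-th power for every $k$ (enlarging $m$ if necessary); hence $\tau(\alpha)$ is a multiplicative representative of $\alpha$. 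Uniqueness is immediate from the rigidity statement: if $a$ is any multiplicative representative of $\alpha$, then writing $a=b^{p^m}$ with $m\ge n-1$ forces $\bar b=\alpha^{p^{-m}}$, so $b$ is a lift of $\alpha^{p^{-m}}$ and $a=b^{p^m}=\tau(\alpha)$.

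Finally, to identify the set $X$, I note that a multiplicative representative is by definition a $p^m$-th power for all $m$, in particular for $m=n$, so $X\subseteq\{\,b^{p^n}\mid b\in R\,\}$. For the reverse inclusion, given $a=b^{p^n}$ I would observe $\bar b=(\bar a)^{p^{-n}}$, so that taking $m=n\ge n-1$ in the definition of $\tau$ gives $\tau(\bar a)=b^{p^n}=a$; thus every $p^n$-th power is the multiplicative representative of its own residue and lies in $X$. I expect the only genuinely delicate point to be the congruence lemma together with its iteration, since everything else is formal once one knows that raising lifts to the $p^m$-th power annihilates the ambiguity coming from $\m$; the hypothesis $\m^n=\{0\}$ is exactly what replaces the passage to an inverse limit in the complete-ring version of the statement.
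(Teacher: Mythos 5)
Your proof is correct and complete: the congruence lemma ($x-y\in\m^k$ implies $x^p-y^p\in\m^{k+1}$, using $p\cdot 1_R\in\m$), its iteration to the exact rigidity $x^{p^m}=y^{p^m}$ for $m\ge n-1$, and the resulting well-definedness and uniqueness of $\tau(\alpha)=c^{p^m}$ all check out, as does the identification of $X$ with the set of $p^n$-th powers. Note that the paper gives no proof of this Fact at all—it is quoted from the cited sources (Chikhachev, Cohen, Nagata)—and your argument is exactly the classical Teichm\"uller-representative construction those sources use, with the hypothesis $\m^n=\{0\}$ playing the role you identify: it makes the sequence of $p^m$-th powers of lifts stabilise outright, replacing the inverse-limit argument needed for complete local rings.
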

Now, we prove the following:
\begin{theorem}\label{T:Charac+}
Let $R$ be a commutative unitary ring of finite Morley rank and assume that $R$ has positive characteristic and that every field in $\Kk(R)$ is infinite. Then $R$ is almost $\Kk(R)$-internal.
In particular, if in addition $R$ is a local ring, then it is almost strongly minimal.
\end{theorem}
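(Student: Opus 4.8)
The plan is to reduce to the local case and to use the theory of multiplicative representatives to exhibit a strongly minimal set to which the whole ring is almost internal. By Lemma \ref{L:Def} the ring $R$ is definably isomorphic to a finite product $R_1\times\ldots\times R_k$ of definable local rings, and almost $\Kk(R)$-internality of $R$ follows once each factor $R_i$ is almost internal to its residue field $K_i$. Hence I would fix a single commutative unitary local ring $R$ of finite Morley rank and positive characteristic with infinite residue field $K=R/\m$, and aim to show $R$ is almost $K$-internal; by Lemma \ref{L:CR-Local} this is the same as showing $R$ is almost strongly minimal, since $K$ is itself almost strongly minimal.

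The key idea is that in positive characteristic one has a distinguished \emph{set-theoretic} section of the residue map given by multiplicative representatives, rather than the ring-theoretic section (Cohen's subfield) available in characteristic $0$. First I would note that since $K$ is an infinite field definable in a finite Morley rank structure it is algebraically closed, hence perfect, so Fact \ref{F:MultRep} applies: the set $X=\{a\in R\mid a=b^{p^n}\text{ for some }b\in R\}$ of multiplicative representatives is definable (it is the image of the definable map $b\mapsto b^{p^n}$), the residue map restricts to a bijection $X\to K$, and this bijection is multiplicative. The crucial point is that the obstruction exploited in Theorem \ref{T:Charac0} disappears: there I would observe that the automorphisms $\sigma$ of Lemma \ref{L:Aut}, built from derivations $\delta$ of $k$, no longer fix $X$ pointwise, because $X$ is not a subring but only a multiplicative section, and a derivation-twisted automorphism cannot be compatible with the $p^n$-th power description of $X$. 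So there is no freedom to move elements of $R$ over the residue field while fixing the relevant strongly minimal data.

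Concretely, I would show $R\subset\dcl(A,X)$ for a suitable finite parameter set $A$, so that $R$ is $X$-internal and hence $K$-internal. To this end I would analyse $R$ through the nilpotency filtration $R=\m^0\supseteq\m\supseteq\m^2\supseteq\ldots\supseteq\m^n=\{0\}$ as in Lemma \ref{L:CR-Local}. Each graded piece $\m^i/\m^{i+1}$ is a finite-dimensional $K$-vector space, and using the multiplicative representatives one can choose definable coset representatives and express each element of $R$ by its coordinates relative to a fixed basis, all of which lie in (or are definable over) $X$ together with finitely many parameters. The multiplicativity of the section $X\to K$ is what makes these coordinates behave definably with respect to the ring operations, so that every element of $R$ is determined over $A$ by finitely many elements of $X$. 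Combining this with Fact \ref{F:Internal} (analysability plus almost internality to strongly minimal sets yields almost internality) gives that $R$ is almost $K$-internal, and then almost strong minimality in the local case.

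The main obstacle I anticipate is the bookkeeping needed to lift the multiplicative section $X\to K$ across the nilpotent filtration: multiplicative representatives interact well with the multiplication but not obviously with the \emph{additive} structure of the higher ideals $\m^i$, so producing a genuinely definable coordinate system on all of $R$ (rather than just on the residue field) requires combining $X$ with an additive basis of $\m$ over $K$ and verifying that the resulting decomposition is definable with parameters. The heart of the argument is thus to leverage Fact \ref{F:MultRep} so that the entire ring, and not merely its residue field, is captured inside $\dcl(A,X)$; once that is established the model-theoretic conclusions follow formally from the facts already in the paper.
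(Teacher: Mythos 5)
Your proposal follows the paper's route in outline: reduce to a local ring with infinite residue field, invoke Fact \ref{F:MultRep} (legitimately, since the residue field is algebraically closed by Fact \ref{F:CR-FMR}, hence perfect), and use the definable set $X$ of multiplicative representatives to get $R$ inside $\dcl$ of parameters and strongly minimal data. However, the step you yourself flag as ``the main obstacle'' --- capturing the \emph{additive} structure of $R$, and not merely the residue field, over $X$ --- is exactly the step you never carry out, and the mechanism you propose for it is off target. You lean on the \emph{multiplicativity} of the section $X\to K$ to make your ``coordinates behave definably,'' but multiplicativity plays no role whatsoever in the argument; what matters is the \emph{uniqueness} of the multiplicative representative, which makes the residue map restrict to a definable bijection $X\to K$ and hence gives $X\subset\dcl(K)$ (the paper phrases this via automorphisms of the saturated model: any automorphism fixing $R/\m$ pointwise fixes $X$ pointwise). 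Your closing appeal to Fact \ref{F:Internal} is also spurious: had you actually shown $R\subset\dcl(A,X)$, you would have $K$-internality outright and would not need any internality-plus-analysability argument.

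The gap closes with two observations that make all the filtration bookkeeping you worry about unnecessary. First, for every $a\in R$, letting $b_a\in X$ be the unique multiplicative representative of $\overline{a}\in R/\m$, one has $a=b_a+(a-b_a)$ with $a-b_a\in\m$; hence $R\subset X+\m$, and so $R\subset\dcl(X\cup\m)$. Second, the additive part is handled wholesale by Lemma \ref{L:CR-Local}: its \emph{statement} (not only the graded computation inside its proof) already asserts that $\m$ is $K$-internal, i.e.\ $\m\subset\dcl(B,K)$ for some parameter set $B$, so there is no need to thread $X$ through the ideals $\m^i$ or to build a Teichm\"uller-style coordinate system. Combining the two observations with $X\subset\dcl(K)$ yields $R\subset\dcl(B,K)$, so $R$ is $K$-internal (in fact internal, not merely almost internal); the local case follows, and the general case then follows from the product decomposition of Lemma \ref{L:Def}. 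Your coordinate-expansion idea could in principle be pushed through, but it would amount to re-proving by hand the $K$-internality of $\m$ that the paper simply cites.
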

\begin{proof}
We may assume that $R$ is saturated. By Lemma \ref{L:Equivalence}, we can further assume that $R$ is a commutative unitary local ring of finite Morley rank with an infinite residue field $K=R/\m$. Moreover, the ring $R$ has characteristic a prime power $p^s$, since the maximal ideal $\m$ is nilpotent, so $K$ has characteristic $p>0$. 

We prove that $R$ is $K$-internal. Consider the set $X$ of multiplicative representatives of $R$, which is clearly definable by Fact \ref{F:MultRep}. Moreover, for every $a\in R$ there is a unique multiplicative representative $b_a\in X$ for $\overline a\in R/\m=K$. In particular, we have that $a-b_a\in \m$ by definition. So, as $a$ was arbitrary, we get $R\subset X+\m$, which implies that $R\subset \dcl(X,\m)$. We already know by the proof of Lemma \ref{L:CR-Local} that $\m$ is $K$-internal. So, it suffices to see that $X$ is $K$-internal as well. But this is immediate by uniqueness of the multiplicative representative, since every automorphism of $R$ fixing pointwise $K=R/\m$ must fix pointwise the set $X$. This shows that $X\subset \dcl(K)$, so $R$ is $K$-internal. \end{proof}

Altogether, Theorems \ref{T:Charac0} and \ref{T:Charac+} combined with Lemma \ref{L:Equivalence} yield: 

\begin{cor}\label{C:Charac}
Let $R$ be a commutative unitary ring of finite Morley rank and assume that every residue field is infinite. The theory of $R$ satisfies the CBP if and only if $R$ is a direct product $R_1\times \ldots \times R_n$ where each $R_i$ is a field or a local ring of positive characteristic. 
\end{cor}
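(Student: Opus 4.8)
The plan is to combine the Cherlin--Reineke structural decomposition with the two characteristic-dependent theorems, using Lemma~\ref{L:Equivalence} as the bridge between the CBP and almost strong minimality of the local factors. Since the CBP is a property of the theory, I would first pass to a saturated model and invoke Lemma~\ref{L:Def} (which rests on Fact~\ref{F:CR-FMR}) to fix a definable isomorphism $R\cong R_1\times\cdots\times R_n$, where each $R_i$ is a definable commutative unitary local ring whose residue field $K_i$ lies in $\Kk(R)$ and is therefore infinite. This reduces the whole statement to an analysis of the individual factors, noting that each $R_i$ is in particular a definable local subring of $R$ with infinite residue field, so that clause $(3)$ of Lemma~\ref{L:Equivalence} is applicable to it.

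For the forward implication I would argue by contraposition on each factor. Assuming that $R$ satisfies the CBP, the implication $(1)\Rightarrow(3)$ of Lemma~\ref{L:Equivalence} gives that every definable local subring of $R$ with infinite residue field, and in particular each $R_i$, is almost strongly minimal. Fix an index $i$: the local ring $R_i$ has either characteristic $0$ or positive characteristic, the latter being a prime power because its maximal ideal is nilpotent. Were $R_i$ of characteristic $0$ and not a field, Theorem~\ref{T:Charac0} would assert that it is \emph{not} almost strongly minimal, contradicting the previous sentence. Hence each $R_i$ is either a field or a local ring of positive characteristic, which is exactly the desired form.

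For the converse I would show each factor is almost strongly minimal and then assemble the pieces through internality. If $R_i$ is a field, it is an infinite field of finite Morley rank and so almost strongly minimal by \cite[Lemma 3]{fW01}; if $R_i$ has positive characteristic (with its infinite residue field), Theorem~\ref{T:Charac+} yields that it is almost strongly minimal with respect to $K_i\in\Kk(R)$. In either case $R_i$ is almost $K_i$-internal, whence the product $R\cong R_1\times\cdots\times R_n$ is almost $\Kk(R)$-internal. Applying the implication $(2)\Rightarrow(1)$ of Lemma~\ref{L:Equivalence} then delivers the CBP for $R$.

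As every step is a direct appeal to an already-established result, I do not anticipate a genuine obstacle; the only point demanding care is the bookkeeping inside Lemma~\ref{L:Equivalence}, namely transporting ``almost strongly minimal with respect to its residue field'' for a single local factor into ``almost $\Kk(R)$-internal'' for the whole product, and checking that the factors $R_i$ furnished by Lemma~\ref{L:Def} genuinely satisfy the definable-local-subring hypothesis of clause $(3)$.
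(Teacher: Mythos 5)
Your proposal is correct and follows essentially the same route as the paper, whose proof is precisely the one-line combination of Theorems~\ref{T:Charac0} and~\ref{T:Charac+} with Lemma~\ref{L:Equivalence} (the decomposition via Lemma~\ref{L:Def} being built into the proof of Lemma~\ref{L:Equivalence}). Your expansion of that combination, including the care point about transporting almost strong minimality of the factors into almost $\Kk(R)$-internality of the product, matches the intended argument.
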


Therefore, by the work of Cherlin and Reineke \cite[Theorem 3.3]{CR76} (see also \cite[Corollary 3.5]{CR76}) we immediately obtain:
\begin{cor}
Let $R$ be a commutative unitary Noetherian local ring, which is not a field, with a nilpotent maximal ideal and an algebraically closed residue field of characteristic $0$. Then $R$ is an uncountably categorical structure without the CBP.
\end{cor}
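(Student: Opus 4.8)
The plan is to read the statement off from the machinery already assembled, since it is a genuine corollary. Two facts must be combined: the uncountable categoricity furnished by Cherlin and Reineke, and the failure of the CBP in characteristic $0$ provided by Theorem \ref{T:Charac0}.

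First I would check that $R$ has characteristic $0$. The residue field $K=R/\m$ is algebraically closed of characteristic $0$, so $n\cdot 1$ has non-zero image in $K$ for every $n\ge 1$; thus $n\cdot 1\notin \m$ and in particular $n\cdot 1\neq 0$ in $R$, giving $\mathrm{char}(R)=0$. Next I would use the Noetherian hypothesis to obtain finite Morley rank. Being Noetherian, the maximal ideal $\m$ is finitely generated, so each quotient $\m^i/\m^{i+1}$ is finite-dimensional over $K$; since $\m$ is nilpotent, $R$ is finite-dimensional over $K$ and hence interpretable in the algebraically closed field $K$ (\cf Remark \ref{R:Interpret}), so it is of finite Morley rank. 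This is precisely the content of \cite[Theorem 3.3]{CR76} (see also \cite[Corollary 3.5]{CR76}), and it is the only place the Noetherian assumption enters; without it the conclusion may fail, as Example \ref{Ex:1} illustrates. Having finite Morley rank with the infinite residue field $K$, the ring $R$ is then uncountably categorical by the corollary following Lemma \ref{L:CR-Local}.

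Finally, $R$ is a commutative unitary local ring of finite Morley rank of characteristic $0$ which is not a field, so Theorem \ref{T:Charac0} applies directly and shows that $R$ is neither almost strongly minimal nor satisfies the CBP. Together with the uncountable categoricity from the previous paragraph, this is the full statement; alternatively, one may invoke Corollary \ref{C:Charac} in place of Theorem \ref{T:Charac0}, observing that a local ring of characteristic $0$ is not a finite product of fields and positive-characteristic local rings. I expect no genuine obstacle here: the substantive work lies in Theorem \ref{T:Charac0} and in the Cherlin--Reineke classification, so all that remains is the elementary verification of the characteristic and the bookkeeping needed to match the hypotheses of those results.
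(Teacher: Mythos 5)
Your proposal is correct and follows exactly the route the paper intends: the paper derives this corollary immediately from the Cherlin--Reineke categoricity result \cite[Theorem 3.3]{CR76} together with Theorem \ref{T:Charac0} (equivalently Corollary \ref{C:Charac}), which is precisely your combination. Your additional verifications (that $\mathrm{char}(R)=0$ since units $n\cdot 1\notin\m$, and that Noetherianity plus nilpotency of $\m$ yields finite dimension over the residue field --- where, as in the paper's proof of Theorem \ref{T:Charac0}, one tacitly uses Cohen's structure theorem to embed $K$ as a coefficient field --- hence finite Morley rank) just make explicit what the paper leaves to the cited reference.
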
  
In particular, rings of  the form $\C[x_1,\ldots,x_n] / \A$, where $\A$ is a primary ideal with radical $(x_1,\ldots,x_n)$, are uncountably categorical without the CBP.  
%Hence, we obtain infinitely many examples of uncountably categorical theories without the CBP.

\subsection{Finite residue field} The family of commutative unitary local rings of finite Morley rank with a finite residue field remains less studied than the previous ones and there is no reasonable algebraic description.

An archetypical example of an uncountably categorical commutative unitary local ring with a finite residue field is the ring  $R=\F_p[(y_i)_{i\in \R}]/(y_iy_j)_{i,j}$ given in Example \ref{Ex:1}. This ring is totally categorical and strongly minimal. So, it is one-based and consequently its theory satisfies the CBP.  Nonetheless, we will see more exotic examples and that most commutative unitary local rings with a finite residue field are not almost strongly minimal.

\subsubsection{Some characterisations} We analyse uncountably categorical commutative local rings of equicharacteristic and with a finite-index connected maximal ideal. That is, uncountably categorical commutative local rings of the form $\F_q\oplus \m\cong \Lambda(\F_q,\m)$, where $q$ is a prime power and $\m= \m^\circ$ is the maximal ideal.

 We begin with two lemmata which will help us to construct ring automorphisms. These are in a way refinements of Lemma \ref{L:Aut} to this framework.

\begin{lemma}\label{L:Auto1}  Let $R$ be a commutative unitary local ring with maximal ideal $\mathfrak m$ such that $R=\mathbb F_q \oplus \mathfrak m$. 	
Let $g:\mathfrak m\rightarrow \mathfrak m$ be a bijective $\F_q$-linear map such that for every $a$ in $\mathfrak m$ the difference $g(a)-a$ lies in $\ann(\mathfrak m)$. If $g$ is the identity on $\mathfrak m^2$, then
	the map 
	\[
	\sigma:R\rightarrow R, \ \sigma(\alpha+a)=\alpha+g(a)
	\] 
	is an automorphism of the ring $R$.
\end{lemma}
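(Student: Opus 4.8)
The plan is to verify directly that $\sigma$ is an additive bijection and then that it respects multiplication, using the three hypotheses on $g$ one at a time. First I would observe that $\sigma$ is additive: since $R = \F_q \oplus \m$ as additive groups and $g$ is $\F_q$-linear (in particular additive) on $\m$ while $\sigma$ acts as the identity on the $\F_q$-summand, additivity of $\sigma$ follows coordinatewise. Bijectivity is immediate from the bijectivity of $g$, as $\sigma$ is a bijection on each summand. So the only real content is multiplicativity.

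For multiplicativity, I would take two arbitrary elements $\alpha + a$ and $\beta + b$ with $\alpha,\beta \in \F_q$ and $a,b\in \m$, and compute both $\sigma\big((\alpha+a)(\beta+b)\big)$ and $\sigma(\alpha+a)\,\sigma(\beta+b)$. Expanding the product gives $(\alpha+a)(\beta+b) = \alpha\beta + (\alpha b + \beta a + ab)$, where $\alpha\beta\in \F_q$ is the residue-field part and $\alpha b + \beta a + ab \in \m$ is the ideal part (note $ab\in\m^2\subset\m$). Applying $\sigma$ means applying $g$ to the $\m$-part, so the left-hand side is $\alpha\beta + g(\alpha b + \beta a + ab)$. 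On the other side, $\sigma(\alpha+a)\sigma(\beta+b) = (\alpha + g(a))(\beta + g(b)) = \alpha\beta + \big(\alpha\, g(b) + \beta\, g(a) + g(a)g(b)\big)$. Thus multiplicativity reduces to the single identity
\[
g(\alpha b + \beta a + ab) = \alpha\, g(b) + \beta\, g(a) + g(a)g(b).
\]

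The main obstacle is establishing exactly this identity, and here is where all three hypotheses enter. By $\F_q$-linearity of $g$ the left-hand side splits as $\alpha\, g(b) + \beta\, g(a) + g(ab)$, so it remains to show $g(ab) = g(a)g(b)$. Since $ab \in \m^2$ and $g$ is the identity on $\m^2$, we have $g(ab) = ab$, reducing the task to proving $g(a)g(b) = ab$. For this I would write $g(a) = a + u$ and $g(b) = b + v$ with $u = g(a)-a$ and $v = g(b)-b$ both lying in $\ann(\m)$ by hypothesis; then $g(a)g(b) = ab + av + ub + uv$, and each of the last three terms is a product of an element of $\m$ (namely $a$, $b$, or $u\in\ann(\m)\subset\m$) with an element of $\ann(\m)$, hence vanishes. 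This yields $g(a)g(b) = ab$, completing the identity. I expect the only subtlety worth flagging is confirming that $\ann(\m)\subset\m$ so that the cross term $uv$ also annihilates, which holds because $\ann(\m)$ is a proper ideal of the local ring $R$ and therefore contained in the unique maximal ideal $\m$; with that in place the verification is purely formal.
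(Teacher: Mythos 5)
Your proof is correct and follows essentially the same route as the paper's: verify additivity and bijectivity coordinatewise, reduce multiplicativity to the identity $g(\alpha b+\beta a+ab)=\alpha g(b)+\beta g(a)+g(a)g(b)$, use that $g$ is the identity on $\m^2$ to get $g(ab)=ab$, and kill the remaining cross terms via $g(a)-a,\,g(b)-b\in\ann(\m)$. The only cosmetic difference is that the paper telescopes $g(a)g(b)=(g(a)-a)g(b)+a(g(b)-b)+ab$ and so never meets your $uv$ term; your treatment of it is fine, and the containment $\ann(\m)\subset\m$ you flag is automatic under the paper's convention that $\ann(\m)$ denotes the annihilator of the ring $\m$ inside itself (and also follows from your locality argument).
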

\begin{proof}
	Note that $\sigma$ is $\F_q$-linear and a bijection. It remains to show that $\sigma$ is multiplicative. Let $\alpha + a$ and $\beta + b$ be two elements in $R$, where $\alpha,\beta\in \F_q$ and $a,b\in \m$. We have that
	\begin{align*}
	\sigma(\alpha+a)\sigma(\beta+b) & =\big(\alpha+g(a)\big)\big(\beta+g(b)\big)=\alpha\beta+\alpha g(b) +\beta g(a)+g(a)g(b)
	\\ & =\alpha\beta+g(\alpha b + \beta a)+ (g(a)-a+a)g(b) \\ &= \alpha\beta+g(\alpha b + \beta a)+ 	(g(a)-a)g(b)+a(g(b)-b)+ab  
	\\ & =\alpha\beta+g(\alpha b + \beta a)+ ab
	=\alpha\beta + g(\alpha b + \beta a + ab) \\ & =\sigma\big( (\alpha+a) (\beta + b) \big),
	\end{align*}
as desired.
\end{proof}

\begin{lemma}\label{L:Auto2} Let $R$ be a commutative unitary local ring with maximal ideal $\mathfrak m$ such that $R=\mathbb F_q \oplus \mathfrak m$. 
For every $\F_q$-linear map $f:\mathfrak m / \ann(\mathfrak m)\rightarrow \ann(\mathfrak m)$ vanishing on $(\mathfrak m^2 + \ann(\mathfrak m))/\ann(\mathfrak m)$, the map 
\[
\sigma:R\rightarrow R, \ \sigma(\alpha+a)=\alpha+a+f\big(a+\ann(\mathfrak m)\big)
\]
 is an automorphism of the ring $R$.
\end{lemma}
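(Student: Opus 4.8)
The plan is to mimic the direct verification carried out in Lemma \ref{L:Auto1}, the only novelty being that the perturbation now factors through the quotient $\m/\ann(\m)$. To streamline the bookkeeping I would first replace $f$ by the auxiliary $\F_q$-linear map
\[
g:\m\to \ann(\m),\qquad g(a)=f\big(a+\ann(\m)\big),
\]
so that $\sigma(\alpha+a)=\alpha+a+g(a)$ for $\alpha\in\F_q$ and $a\in\m$. By construction $g$ is $\F_q$-linear, its image lies in $\ann(\m)$, and it vanishes both on $\ann(\m)$ (since it factors through $\m/\ann(\m)$) and on $\m^2$ (by the hypothesis that $f$ kills $(\m^2+\ann(\m))/\ann(\m)$).

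Additivity and $\F_q$-linearity of $\sigma$ are immediate from those of $g$. For bijectivity I would observe that $g^2=0$: indeed $g(a)\in\ann(\m)$ for every $a$, so $g(g(a))=f\big(g(a)+\ann(\m)\big)=f(0)=0$. Consequently the map $a\mapsto a+g(a)$ on $\m$ has the two-sided inverse $a\mapsto a-g(a)$, whence $\sigma$ is a bijection fixing $\F_q$ pointwise; in particular $\sigma(1)=1$.

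The substance of the argument is multiplicativity, which I would establish by expanding $\sigma\big((\alpha+a)(\beta+b)\big)$ and $\sigma(\alpha+a)\,\sigma(\beta+b)$ and matching terms. On one side, $(\alpha+a)(\beta+b)=\alpha\beta+(\alpha b+\beta a+ab)$, so applying $\sigma$ and using $\F_q$-linearity of $g$ together with $g(ab)=0$ (as $ab\in\m^2$) yields $\alpha\beta+\alpha b+\beta a+ab+\alpha g(b)+\beta g(a)$. On the other side, writing $a'=a+g(a)$ and $b'=b+g(b)$, the product $a'b'$ reduces to $ab$ because each of $a\,g(b)$, $g(a)\,b$ and $g(a)\,g(b)$ vanishes: the factors $g(a),g(b)$ lie in $\ann(\m)$ while $a,b\in\m$. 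Collecting the remaining terms gives exactly the same expression, so the two sides coincide.

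The key points are therefore the two cancellation principles — that $g$ kills $\m^2$ (which disposes of $g(ab)$) and that $\im(g)\subseteq\ann(\m)$ (which disposes of the cross terms in $a'b'$) — and these are precisely the two standing hypotheses on $f$. I do not anticipate any genuine obstacle here; the only care required is to make sure every product in which a value of $g$ appears is annihilated, which is exactly what the annihilator condition guarantees.
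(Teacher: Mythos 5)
Your proposal is correct. The paper proves this lemma by reduction: it sets $g(x)=x+f\bigl(x+\ann(\m)\bigr)$, verifies that this $g$ satisfies all the hypotheses of Lemma \ref{L:Auto1} (in particular checking injectivity and surjectivity by hand), and then invokes that lemma to get multiplicativity. You instead work with the perturbation $a\mapsto f\bigl(a+\ann(\m)\bigr)$ directly and redo the multiplicativity computation inline; the computation is parallel to the one inside the proof of Lemma \ref{L:Auto1}, resting on exactly the same two cancellations (the perturbation lands in $\ann(\m)$, killing all cross terms, and it vanishes on $\m^2$, handling the term $ab$), so the mathematical content is essentially the same. What your route buys is a slightly cleaner treatment of bijectivity: observing that the perturbation is nilpotent ($g^2=0$, since its image lies in $\ann(\m)$ on which it vanishes) gives the explicit two-sided inverse $a\mapsto a-g(a)$ in one line, whereas the paper argues injectivity and surjectivity separately. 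What the paper's route buys is economy: having already proved Lemma \ref{L:Auto1}, it avoids repeating the multiplication check, and indeed Lemma \ref{L:Auto1} is reused elsewhere (e.g.\ in Proposition \ref{P:AlmostStronglyMinimal} and Theorem \ref{T:CBP}), so factoring the computation out is the natural choice there.
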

\begin{proof} 
	Let 
	$g:\mathfrak m\rightarrow \mathfrak m$ be the map defined by $g(x)=x+f\big(x+\ann(\mathfrak m)\big)$. Note that $g$ is $\F_q$-linear and is the identity on $\mathfrak m^2$ since $f$ vanishes on $\big(\mathfrak m^2 + \ann(\mathfrak m)\big)/\ann(\m)$. Furthermore, for every $a\in\m$ the difference $g(a)-a=f\big(a+\ann(\mathfrak m)\big)$ lies in $\ann(\mathfrak m)$. By Lemma \ref{L:Auto1} we need only show that $g$ is a bijection, since $\sigma(\alpha+a)=\alpha+g(a)$. 
	If $g(a)=a+f\big(a+\ann(\mathfrak m)\big)=0$, then $a\in \ann(\m)$ since  $f(a+\ann(\m))\subset \ann(\m)$. Therefore $f\big(a+\ann(\mathfrak m)\big)=f\big(0+\ann(\mathfrak m)\big)=0$, thus 
	\[
	0=g(a)=a+f\big(a+\ann(\mathfrak m)\big)=a.
	\] Hence, the map $g$ is injective. Note that $g$ maps the element $a-f\big(a+\ann(\mathfrak m)\big)$ to $a$, so we deduce that $\sigma$ is a bijection, as desired.
\end{proof}

Next, we characterise almost strongly minimal equicharacteristic local rings  with a connected finite-index maximal ideal.

\begin{prop}\label{P:AlmostStronglyMinimal}
Let $R$ be an uncountably categorical commutative unitary local ring whose maximal ideal $\mathfrak m$ is connected and $R=\mathbb F_q \oplus \mathfrak m$. Then, the ring $R$ is almost strongly minimal if and only if $\mathfrak \m=\ann(\m)$.
\end{prop}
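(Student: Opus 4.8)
The condition $\m=\ann(\m)$ is equivalent to $\m^2=0$, since $\ann(\m)=\{x\in\m : x\m=0\}$; so the plan is to show that $R$ is almost strongly minimal exactly when $\m^2=0$. For the easy implication, suppose $\m^2=0$. Then $\m$ carries trivial multiplication, so the only structure $R$ induces on it is that of an $\F_q$-vector space (multiplication by $r=\alpha+b$ acts as the scalar $\alpha$, because $b\cdot\m\subseteq\m^2=0$). As $\m$ is connected and infinite it is infinite-dimensional over the finite field $\F_q$, hence strongly minimal; and since $R=\F_q\oplus\m$ with $\F_q$ finite we have $R\subseteq\dcl(\F_q\cup\m)$, so $R$ is almost internal to $\m$ and therefore almost strongly minimal.

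For the converse I would argue by contradiction: assume $\m^2\neq 0$ (so $\ann(\m)\subsetneq\m$) yet $R$ is almost strongly minimal, witnessed by a strongly minimal $D$ over a finite set $A_0$. The engine is Lemma \ref{L:Auto2}: every $\F_q$-linear $f\colon\m/\ann(\m)\to\ann(\m)$ vanishing on $(\m^2+\ann(\m))/\ann(\m)$ yields an automorphism $\sigma_f(\alpha+a)=\alpha+a+f(a+\ann(\m))$ that fixes $\ann(\m)$ pointwise, induces the identity on $\m/\ann(\m)$, and fixes $\F_q$. First I would record two structural facts. We have $\m\neq\m^2+\ann(\m)$: otherwise, using $\ann(\m)\cdot\m=0$, expanding $\m^2=(\m^2+\ann(\m))\cdot(\m^2+\ann(\m))$ gives $\m^2=\m^4$ and hence $\m^2=0$ by nilpotency; and since $\m$ is connected the proper definable subgroup $\m^2+\ann(\m)$ has infinite index, so $\m/(\m^2+\ann(\m))$ is infinite. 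Secondly, $\ann(\m)$ is infinite: if it were finite then the top nonzero power $\m^k$ (with $\m^{k+1}=0$, $k\ge 2$) would be finite, so for each $b\in\m^{k-1}$ the map $a\mapsto ab$ from $\m$ into $\m^k$ has finite image and is therefore null by connectedness, giving $\m^{k-1}\subseteq\ann(\m)$ and $\m^k=0$, a contradiction.

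The crux is then to show that $\sigma_f$ also fixes the data witnessing almost minimality. The ideal chain $R\supseteq\m\supseteq\ann(\m)\supseteq 0$ has successive quotients $\F_q$, $\m/\ann(\m)$ and $\ann(\m)$, so $R$ is analysable to $\Sigma=\{\ann(\m),\m/\ann(\m)\}$ (the finite quotient $\F_q$ being harmless). Since $D$ is strongly minimal and analysable to $\Sigma$, Fact \ref{F:Internal} gives that $D$ is almost $\Sigma$-internal, i.e.\ $D\subseteq\acl(B,\ann(\m),\m/\ann(\m))$ for some finite $B\supseteq A_0$. Now I would choose $a\in\m$ whose class in the infinite space $\m/(\m^2+\ann(\m))$ avoids the finite-dimensional span of the $\m$-parts of $B$; almost minimality gives $a\in\acl(A_0,\bar d)$ with $\bar d\in D$, and the previous sentence gives $\bar d\in\acl(B,\bar e)$ for a tuple $\bar e$ from $\ann(\m)\cup\m/\ann(\m)$, whence $a\in\acl(B,\bar e)$. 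Taking $f$ to vanish on the $B$-span, the automorphism $\sigma_f$ fixes $B$ pointwise and fixes $\bar e$ (as it fixes $\ann(\m)$ and $\m/\ann(\m)$ pointwise), so it permutes the finite set of conjugates of $a$ over $B\bar e$; yet $\sigma_f(a)=a+f(a+\ann(\m))$ can be made equal to $a+w$ for every $w\in\ann(\m)$, an infinite set. This contradiction completes the argument.

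The hard part will be exactly this matching of the constructed automorphisms with the witnessing strongly minimal set: \emph{a priori} $\sigma_f$ need not fix $D$, and the proof goes through only because $D$ turns out to be almost internal to the two sections $\ann(\m)$ and $\m/\ann(\m)$ that $\sigma_f$ fixes pointwise. Securing this, together with the two connectedness-and-nilpotency observations guaranteeing $\m\neq\m^2+\ann(\m)$ and $\ann(\m)$ infinite, is the technical heart; the automorphism bookkeeping via Lemma \ref{L:Auto2} and the forward direction are routine.
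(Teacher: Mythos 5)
Your proof is correct, and the skeleton of the hard direction coincides with the paper's: both rest on the same two structural facts (that $\m^2+\ann(\m)$ is a proper subgroup of $\m$, hence of infinite index by connectedness, and that $\ann(\m)$ is infinite), on choosing $a$ whose class in $\m/(\m^2+\ann(\m))$ avoids a finite-dimensional $\F_q$-span determined by the parameters, and on the family of automorphisms from Lemma \ref{L:Auto2} that move $a$ through an entire coset of $\ann(\m)$. Where you genuinely diverge is the reduction step that makes these automorphisms bite. The paper invokes uncountable categoricity (unidimensionality) to pass directly from almost strong minimality to the assertion that $R$ is almost internal to $\m/\ann(\m)$, so its automorphisms only need to fix $\m/\ann(\m)$ pointwise; you instead keep the witnessing strongly minimal set $D$, observe that $R$ (hence $D$) is analysable in the pair $\Sigma=\{\ann(\m),\,\m/\ann(\m)\}$ via the chain $R\supseteq\m\supseteq\ann(\m)\supseteq 0$, and apply Fact \ref{F:Internal} to get $D$ almost $\Sigma$-internal --- which suffices precisely because the Lemma \ref{L:Auto2} automorphisms fix \emph{both} sections pointwise ($\ann(\m)$ literally, and $\m/\ann(\m)$ as induced maps). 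Your route buys a little generality: it bypasses the non-orthogonality/unidimensionality appeal, so the failure of almost strong minimality when $\ann(\m)\subsetneq\m$ needs only finite Morley rank plus connectedness rather than full categoricity; the paper's appeal is shorter but less self-contained. Similarly, your proof that $\ann(\m)$ is infinite (a definable additive map on the connected group $\m$ with finite image is null, applied to multiplication by elements of the penultimate power) is a more hands-on variant of the paper's, which cites definability and connectedness of the powers $\m^r$ via Zilber's Indecomposability Theorem \cite{aN89}. Two small points to tighten: you still need that citation (or an equivalent argument) once, to know that $\m^2$ --- hence the subgroup $\m^2+\ann(\m)$ --- is \emph{definable} before concluding that a proper definable subgroup of the connected group $\m$ has infinite index; and when you speak of the ``$\m$-parts of $B$'' you should first replace the finite set $B$ of imaginaries by real tuples (harmless, since every imaginary is definable over a real tuple) and note that your automorphisms fix $\F_q$ pointwise by construction, so only the $\m$-components matter. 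The forward direction is the paper's argument essentially verbatim.
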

\begin{proof} Observe first that for every  natural number $k\ge 1$ the ideal $\mathfrak m^k$ is definable and connected by  \cite[Lemma 7]{aN89}, as an application of Zilber's Indecomposability Theorem. Thus, as $\m$ is nilpotent, we deduce that $\ann(\m)$ contains some $\m^r\neq \{0\}$ and so it is infinite.

Now, if $\ann(\mathfrak m)=\mathfrak m$, then $\m$ is clearly an $\F_q$-vector space. Moreover, notice that the ring multiplication of $R$ is definable from the multiplication of $\mathbb F_q$ and the vector space structure, so $R$ is in fact the structure $\F_q\times\mathfrak m$ with the field structure on the first coordinate and the $\F_q$-vector space structure on the second one.  This implies that the induced structure of $R$ in $\mathfrak m$ is the one given by the $\mathbb F_q$-vector space structure, so  $\mathfrak m$ is a strongly minimal subset of $R$. Hence, we deduce that $R$ is almost strongly minimal.

	For the other direction, suppose that $R$ is almost strongly minimal. We assume that $\ann(\mathfrak m)$ is a proper subset of $\mathfrak m$ and aim for a contradiction. Because $\mathfrak m$ is connected, the quotient $\mathfrak m/\ann(\mathfrak m)$ must be infinite. Since $R$ is almost strongly minimal, we deduce that $R$ is almost internal to $\mathfrak m/\ann(\mathfrak m)$. Therefore, there are elements $a_1,\ldots,a_n$ in $\mathfrak m$ such that $R$ is contained in $\acl\big(a_1,\ldots,a_n,\mathfrak m/\ann(\mathfrak m)\big)$. 
	
	We now show that also the subgroup $\mathfrak m^2 + \ann(\mathfrak m)$ must have infinite index in $\mathfrak m$. Otherwise $\mathfrak m= \mathfrak m^2 + \ann(\mathfrak m)$ and it would follow that 
	$\mathfrak m^2 = \big( \mathfrak m^2 + \ann(\mathfrak m)\big)^2=\mathfrak m^4$, which in turn would imply that $\mathfrak m^2 =\{0\}$, since $\mathfrak m$ is nilpotent. So, we would get that $\mathfrak m= \mathfrak m^2 + \ann(\mathfrak m) =\ann(\m)$, contradicting our assumption. Hence, we deduce that the subgroup $\mathfrak m^2 + \ann(\mathfrak m)$ has infinite index in $\mathfrak m$, as claimed.
	
	Since
	\[
	{\big({\mathfrak m}/{\ann(\mathfrak m)}\big)} \big/ {\big({\mathfrak m^2 + \ann(\mathfrak m)}/{\ann(\mathfrak m)}\big)}\cong \mathfrak m \big/ \big(\mathfrak m^2 + \ann(\mathfrak m)\big),
	\]
	we deduce that the $\mathbb F_q$-vector space $\mathfrak m/\ann(\mathfrak m)$ is infinite dimensional over the subspace $\big(\mathfrak m^2 + \ann(\mathfrak m)\big)/\ann(\mathfrak m)$. Therefore, there exists an element $a\in \mathfrak m$ such that the coset $a+\ann(\mathfrak m)$ does not lie in the $\mathbb F_q$-vector space generated by 
	\[
	\big(\mathfrak m^2 + \ann(\mathfrak m)\big)/\ann(\mathfrak m), a_1 + \ann(\mathfrak m),\ldots,a_n + \ann(\mathfrak m).
	\] 
	Hence, we can choose for each $b\in\ann(\mathfrak m)$ an $\F_q$-linear map \[f_b:\mathfrak m / \ann(\mathfrak m)\rightarrow \ann(\mathfrak m)\] which vanishes on  $\big(\mathfrak m^2 + \ann(\mathfrak m)\big)/\ann(\mathfrak m), a_1 + \ann(\mathfrak m),\ldots,a_n + \ann(\mathfrak m)$ and maps $a+\ann(\mathfrak m)$ to $b$. By Lemma \ref{L:Auto2}, the function
	$\sigma_b :R\rightarrow R$ defined by 
	\[
	\sigma_b(\gamma+c)=\gamma+c+f_b\big(c+\ann(\mathfrak m)\big)
	\]
	is  a ring automorphism, where $\gamma\in\F_q$ and $c\in\m$. Note that $\sigma_b$ fixes $a_1,\ldots,a_n$ and $\mathfrak m/\ann(\mathfrak m)$ pointwise, and maps $a$ to $a+b$. 
	This shows that $a$ has $|\ann(\m)|$ many conjugates over $a_1,\ldots,a_n,\mathfrak m/\ann(\mathfrak m)$, which is the desired contradiction because $R$ is contained in $\acl(a_1,\ldots,a_n,\mathfrak m/\ann(\mathfrak m))$ and $\ann(\m)$ is infinite.
\end{proof}

As a consequence, since every local ring $R=\F_q\oplus \m$ whose maximal ideal $\m$ annihilates itself is interpretable in an $\F_q$-vector space structure, we deduce:

\begin{cor}
Let $R$ be an uncountably categorical commutative unitary local ring whose maximal ideal $\mathfrak m$ is connected and $R=\mathbb F_q \oplus \mathfrak m$. If $R$ is almost strongly minimal, then it is totally categorical.
\end{cor}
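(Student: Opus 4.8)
The plan is to read off the algebraic consequence of almost strong minimality from Proposition \ref{P:AlmostStronglyMinimal} and then feed it into the observation recorded just before the statement. First I would invoke Proposition \ref{P:AlmostStronglyMinimal}: since $R$ is almost strongly minimal by hypothesis and its connected maximal ideal satisfies $R=\F_q\oplus\m$, the proposition gives at once that $\m=\ann(\m)$. This equality says precisely that every element of $\m$ annihilates $\m$, so $\m^2=\{0\}$; that is, the maximal ideal annihilates itself.

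Next I would record what the multiplication looks like under $\m^2=\{0\}$. For $\alpha,\beta\in\F_q$ and $a,b\in\m$ we have
\[
(\alpha+a)(\beta+b)=\alpha\beta+(\alpha b+\beta a),
\]
because the cross term $ab$ lies in $\m^2=\{0\}$. Hence the full ring structure on $R=\F_q\oplus\m$ is determined by the finite field $\F_q$ together with the $\F_q$-vector space structure on $\m$ (scalar action given by multiplication in $R$). This is exactly the point noted just before the statement: such an $R$ is interpretable in the pure $\F_q$-vector space $\m$. I would also remark that $R$ is infinite (being uncountably categorical) while $\F_q$ is finite, so $\m$ is an infinite, hence infinite-dimensional, $\F_q$-vector space.

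It then remains to transfer categoricity. The theory of an infinite $\F_q$-vector space is totally categorical: it is strongly minimal, and hence uncountably categorical, while it is $\aleph_0$-categorical because over the finite field $\F_q$ there are, for each $n$, only finitely many linear-dependence patterns among $n$ vectors, so Ryll-Nardzewski applies. Because $R$ is interpretable in this vector space, I would conclude that $R$ is $\aleph_0$-categorical as well: an $n$-tuple from $R$ is coded by a tuple of fixed length from $\m$ (together with a choice of elements of the finite set $\F_q$), so the finite bound on the number of types upstairs bounds the number of complete $n$-types of $R$. Equivalently, one can check Ryll-Nardzewski for $R$ directly, since the type of a tuple $(\alpha_i+a_i)_i$ is determined by the finitely many choices $(\alpha_i)_i\in\F_q^n$ and by the type of $(a_i)_i$ in the $\aleph_0$-categorical vector space $\m$. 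Together with the hypothesis that $R$ is uncountably categorical, this yields that $R$ is totally categorical.

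I expect the only genuinely delicate point to be this last transfer: one must be careful that interpretability passes $\aleph_0$-categoricity from the vector space down to $R$. Everything else is either a direct appeal to Proposition \ref{P:AlmostStronglyMinimal} or the routine computation exhibiting the multiplication as $\F_q$-bilinear data; the transfer step is the one place where the argument uses more than the explicit description of the ring operations, and I would justify it via Ryll-Nardzewski as indicated above.
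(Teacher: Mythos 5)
Your proof is correct and is essentially the paper's own argument: the paper derives this corollary directly from Proposition \ref{P:AlmostStronglyMinimal} (giving $\m=\ann(\m)$, hence $\m^2=\{0\}$) together with the remark immediately preceding the statement that such a ring is interpretable in an $\F_q$-vector space, which is exactly the route you take. Your additional care in justifying the transfer of $\aleph_0$-categoricity through the interpretation via Ryll--Nardzewski, and in noting that uncountable categoricity is used as a hypothesis rather than transferred, only makes explicit what the paper leaves implicit.
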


We point out that in Proposition \ref{P:AlmostStronglyMinimal} the maximal ideal must be connected, otherwise the statement does not hold as witnessed in the following example.

\begin{example}
For $p>0$ prime, consider the ring $S=\F_p[(y_i)_{i\in \R}]/(y_iy_j)_{i,j}$ given in Example \ref{Ex:1} and let $R=S[x]/(x^2)$. This is an $\aleph_0$-categorical ring of finite Morley rank, since it is interpretable in $S$. In fact, it has Morley rank $2$. The maximal ideal $\m$ of $R$ has index $p$  and consists of elements 
\[
\sum_{i\in \R} \alpha_i y_i + \left( \beta + \sum_{i\in \R} \beta_i y_i\right) x + \A,
\]
where $\A=(x^2, (y_iy_j)_{i,j})$ is the ideal of $R$  generated by $x^2$ and all $y_iy_j$ with $i,j\in\R$, $\beta\in\F_p$ and $\alpha_i,\beta_i\in\F_p$ for $i\in \R$ with all but finitely many zero. Note that $R=\F_p\oplus \m$ and that the connected component $\m^\circ$ of $\m$ has index $p^2$ in $R$ and it is formed by the elements of $\m$ with $\beta=0$. So, we have that $\m^\circ = \ann(\m^\circ)$. Also, the ideal $\m^2$ consists of those elements of $\m$ with $\alpha_i=0$ for $i\in \R$ and $\beta=0$. So, the ideal $\m^2$ is strongly minimal. By considering the map $z\mapsto z\cdot (x+\A)$ from $\m^\circ\to\m^2$, we see that this is a surjective homomorphism with kernel $\m^2$, so $\m^\circ$ is analysable in $\m^2$ in two steps. Hence, we deduce that $R$ is uncountably categorical, see also \cite[Example 6.2]{CR76}. 

However, we see that it is not almost strongly minimal. Suppose otherwise that there is some finite subset $A\subset R$ such that $R\subset \acl(A,\m^2)$. Choose a natural number $k$ large enough such that for $i\in\R$ with $i\ge k$ no indeterminate $y_i$ occurs in the elements of $A$. Given a natural number $\ell \ge k$ we consider the map $\delta_{k,\ell}: \m \to \m$, which is defined as
\[
\sum_{i\in \R} \alpha_i y_i + \left( \beta + \sum_{i\in \R} \beta_i y_i\right) x + \A \mapsto \alpha_k y_\ell + \A.
\] 
This is an $\F_q$-linear map of $\m$ which vanishes on $\m^2\cup A$. So, the function
\[
g_{k,\ell} : \m\to \m , \ a \mapsto a+ \delta_{k,\ell} (a)(x + \A)
\] 
is a bijective $\F_q$-linear function which is the identity on $\m^2$ and such that for every $a\in \m$ the difference $g_{k,\ell}(a) - a = \delta_{k,\ell} (a)(x+\A) \in \m^2 =\ann(\m)$. Thus, by Lemma \ref{L:Auto1}, we obtain for each $\ell\ge k$ an automorphism $\sigma_{k,\ell} \in \Aut(R)$, which is defined as $\sigma_{k,\ell}(\alpha + a) = \alpha + g_{k,\ell}(a)$, for $\alpha\in\F_p$ and $a\in \m$. In particular, each $\sigma_{k,\ell}\in \Aut(R)$ fixes pointwise $\m^2\cup A$, so we deduce that $R\not\subset\acl(A,\m^2)$, a contradiction.
\end{example} 

Next, we characterise equicharacteristic local rings with connected finite-index maximal ideal which have the CBP.

\begin{theorem}\label{T:CBP}
Let $R$ be an uncountably categorical commutative unitary local ring whose maximal ideal $\mathfrak m$ is connected and $R=\mathbb F_q \oplus \mathfrak m$. Then, the ring $R$ has the CBP if and only if the quotient $R/\ann(\m)$ is almost strongly minimal.
\end{theorem}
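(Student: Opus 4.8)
The plan is to organise everything around the ideal $\ann(\m)$. Since $R=\F_q\oplus\m$ is unital one has $\ann(R)=\{0\}$ and $\ann_R(\m)=\ann(\m)$, so $R=\Lambda(\F_q,\m)$ has connected component $R^\circ=\m$ and $\ann(R^\circ)=\ann(\m)$. The quotient $\bar R:=R/\ann(\m)$ is again a commutative unitary local ring $\F_q\oplus\bar\m$ with $\bar\m=\m/\ann(\m)$ connected (a quotient of the connected group $\m$ by a definable subgroup), and being interpretable in $R$ it is of finite Morley rank and unidimensional, hence uncountably categorical; thus Proposition \ref{P:AlmostStronglyMinimal} applies to $\bar R$. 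A direct computation gives $\ann(\bar\m)=\{x+\ann(\m): x\m\subseteq\ann(\m)\}$, whence $\ann(\bar\m)=\bar\m$ if and only if $\m^3=0$. Therefore I may rephrase the target as: \emph{$R$ has the CBP if and only if $\m^2\subseteq\ann(\m)$.} The degenerate case $\m^2=0$ is immediate, for then $R$ is interpretable in the $\F_q$-vector space $\m$, hence one-based and so has the CBP, while $\bar R=\F_q$ is finite and trivially almost strongly minimal; so I assume $\m^2\neq0$ from now on.

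For the implication from the CBP to almost strong minimality I argue contrapositively and assume $\m^3\neq0$. By Proposition \ref{P:Ann} applied to the connected non-unital ring $\m$, the CBP forces $\bar\m=\m/\ann(\m)$ to be almost $\Pp$-internal. If there were no non-modular strongly minimal set, almost $\Pp$-internality would make the connected group $\bar\m$ finite, hence trivial, against $\m^2\neq0$; so the unique strongly minimal set $D$ is non-modular, and almost $\Pp$-internality of $\bar\m$ means $\bar\m$ is almost internal to $D$. Fix then a finite parameter set $A_0$ and a finite tuple $\bar d$ from $D$ with $\bar\m\subseteq\acl(A_0,\bar d)$. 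Because $\m^3\neq0$, multiplication descends to a nonzero trilinear map $\bar\m\times\bar\m\times\bar\m\to\m^3$, and I plan to exploit this non-degeneracy — as the derivation families of Fact \ref{F:BJ} are exploited in Theorem \ref{T:Lie_B} — to build, for infinitely many distinct targets, $\F_q$-algebra automorphisms $g$ of the ring $\m$ that fix $A_0$ and $\bar d$ and send the class in $\bar\m$ of one fixed generic element to infinitely many distinct classes. Each such $g$ extends to a ring automorphism $\alpha+a\mapsto\alpha+g(a)$ of $R=\Lambda(\F_q,\m)$, so that this element of $\bar\m$ acquires infinitely many conjugates over $A_0\cup\bar d$, contradicting $\bar\m\subseteq\acl(A_0,\bar d)$ and hence the CBP.

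For the converse, assume $\m^2\subseteq\ann(\m)$. Then $R$ is a two-step object: multiplication factors through a symmetric bilinear map $\bar\m\times\bar\m\to\m^2\subseteq\ann(\m)$, the ideal $\ann(\m)$ has trivial multiplication, and by the equivalence above $\bar R$ is almost strongly minimal. To establish the CBP I would use that, by Chatzidakis's theorem, $\tp(\cb(\stp(a/b))/a)$ is always $\Pp$-analysable; by Fact \ref{F:Internal} it then suffices to show that each such canonical base is almost strongly minimal. The idea is that the sole obstruction to $R$ being almost strongly minimal is, by Lemmas \ref{L:Auto1} and \ref{L:Auto2}, the group of automorphisms that fix $\bar\m$ and $\ann(\m)$ and translate $\m$ within the cosets of $\ann(\m)$; this is a vector group, reflecting that the kernel $\ann(\m)$ enters any canonical base only through one-based linear-algebraic data, whereas modulo $\ann(\m)$ the structure is the almost strongly minimal $\bar R$. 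Combining these two features should confine $\tp(\cb(\stp(a/b))/a)$ to be almost internal to $D$, yielding the CBP.

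The step I expect to be hardest is the automorphism construction in the first implication. In characteristic $p$ a single unipotent automorphism has order $p$ and hence only finite orbits, so — in contrast with the characteristic $0$ arguments of Theorems \ref{T:Algebra} and \ref{T:Lie}, where one iterates a derivation-induced automorphism — one cannot iterate a single map but must manufacture an infinite \emph{family} of $\F_q$-linear derivations of $\m$ with prescribed values modulo $\ann(\m)$ and prescribed vanishing on $A_0$ and $\bar d$. Showing that such a family exists precisely when $\m^3\neq0$, and that the resulting automorphisms genuinely move $\m/\ann(\m)$ while fixing the witnessing strongly minimal data, is the crux; the dual delicate point in the converse is the verification that the kernel $\ann(\m)$ is one-based and therefore harmless for canonical bases.
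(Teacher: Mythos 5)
Your proposal founders on the reading of the statement, and the error is fatal to everything that follows. In Theorem \ref{T:CBP}, ``$R/\ann(\m)$ is almost strongly minimal'' means almost strongly minimal \emph{as a definable set inside the structure $R$}, with all its induced structure --- not as a pure ring in its own right. You apply Proposition \ref{P:AlmostStronglyMinimal} to the pure quotient ring $\bar R=\F_q\oplus\m/\ann(\m)$ and thereby reformulate the theorem as ``$R$ has the CBP if and only if $\m^2\subseteq\ann(\m)$ (equivalently $\m^3=0$)''. That reformulation is false, and the paper itself contains the counterexample: in Example \ref{E:ring2}, let $\mathfrak n$ be the maximal ideal of $F[x]/(x^n)$ with $n>3$ and $S=\F_p\oplus\mathfrak n$. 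Then $\mathfrak n^3\neq 0$, and $S/\ann(\mathfrak n)$ is indeed \emph{not} almost strongly minimal as a pure ring (precisely by Proposition \ref{P:AlmostStronglyMinimal}), yet $S$ \emph{does} satisfy the CBP, because $S/\ann(\mathfrak n)$ is an almost strongly minimal definable subset of $S$ (the ring $S$ interprets the algebraically closed field $F$, and the quotient is internal to it). The same example shows that the automorphism family you flag as your ``hardest step'' cannot exist in general: when $\m/\ann(\m)$ is internal to an interpretable field, no automorphism moves it while fixing the finitely many witnessing parameters and the strongly minimal data, and the nonvanishing of the trilinear map $\bar\m\times\bar\m\times\bar\m\to\m^3$ gives no leverage to produce one. (Your opening move in that direction --- Proposition \ref{P:Ann} gives that $\m/\ann(\m)$, hence $R/\ann(\m)$, is almost strongly minimal as a definable set --- is in fact the paper's \emph{entire} proof of the forward implication; the mistake is only in trying to push beyond it to $\m^3=0$.)

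The converse half is not salvageable from your sketch either. First, you argue it only under the hypothesis $\m^2\subseteq\ann(\m)$, which by the above is strictly stronger than the theorem's actual hypothesis, so even a complete argument would not prove the stated equivalence. Second, the crux --- that $\ann(\m)$ ``enters any canonical base only through one-based linear-algebraic data'' and is ``therefore harmless'' --- is exactly what needs proof, and the reduction via Chatzidakis plus Fact \ref{F:Internal} does not supply it: that reduction correctly shows it suffices to prove each canonical base is almost internal to strongly minimal sets, but that remaining task is the whole theorem. The paper's proof of this direction is a genuine, detailed construction: fix a formula $\varphi(x,b)$ in $\tp(a/R_0)$ of minimal Morley rank and degree one, reduce $a,b$ to tuples from $\m$, and then show by an explicit linear-algebra argument over $\F_q$ (choosing bases relative to $\langle \m^2,a\rangle_{\F_q}$ and invoking Lemma \ref{L:Auto1}) that every automorphism of $R$ fixing $a$ and $R/\ann(\m)$ pointwise sends $b$ to another realization of the $\varphi$-definition of $\tp(a/R_0)$; hence $\cb(a/R_0)$ is, over $a$, almost internal to the almost strongly minimal set $R/\ann(\m)$. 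None of that work is circumvented by assuming $\m^3=0$.
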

\begin{proof}
	Assume that $R$ has the CBP. We already know by Proposition \ref{P:Ann} that $\m/\ann(\m)$ is almost strongly minimal, and so is $R/\ann(\m)$.
	
	For the other direction, assume that $R/\ann(\m)$ is almost strongly minimal and
	fix a countable elementary substructure $R_0\preceq R$. Given a tuple $a=(a_1,\ldots,a_n)$ of elements in $R$, we need only show that the type $\stp(\cb(a/R_0)/a)$ is almost $R/\ann(\m)$-internal in order to deduce that $R$ has the CBP.
	
	Choose a formula $\varphi(x,b)$ in $\tp(a/R_0)$ of minimal Morley rank and degree one. 
	Since the multiplicative identity of $R$ lies in $\F_q$, it follows that every element from $\F_q$ is definable over the empty set. Hence, after possibly replacing $a$ and $b$ by interdefinable tuples, we may assume that $a$ and $b$ are tuples of elements in $\m$. 
	Furthermore, after possibly permuting the coordinates, we may assume that
	$(b_{1},\ldots,b_k)$ is the maximal subtuple of $b=(b_{1},\ldots,b_m)$ which is $\F_q$-linearly independent over $\langle \m^2 , a\rangle_{\F_q}$. Hence, for each $k<j\leq m$ there are elements $\xi_{j,i}$ in $\F_q$ such that the difference
	$b_{j} - \sum_{i=1}^{j-1}\xi_{j,i}b_i$ lies in $\langle \m^2 , a\rangle_{\F_q}$.
	Note that the $\F_q$-vector space $\langle a_1 + \m^2, \ldots, a_n + \m^2 \rangle_{\F_q}$ is finite. Choose a finite subset $C\subset R_0$ such that 
	\[
	\{n + \m^2\in \langle a_1 + \m^2, \ldots, a_n + \m^2 \rangle_{\F_q}\ |\ n\in R_0 \}=\{c + \m^2 \ |\ c\in C \}.
	\]
	Given elements $n_1,\ldots,n_k$ in $R_0$, note that they are $\F_q$-linearly independent over $\langle \m^2 , a\rangle_{\F_q}$ if and only if $R_0\models \phi(n_1,\ldots,n_k)$ where
	\[
	\phi(z_1,\ldots,z_k) =  \bigwedge_{\substack{c\in C \\ \zeta_1,\ldots,\zeta_k\in\F_q }} c-\sum_{i=1}^k \zeta_i z_i \notin\m^2.
	\]
	Since the canonical base $\cb(a/R_0)$ is interdefinable with the canonical parameter $\ulcorner \textnormal{d}_p x \varphi(x,y)\urcorner$, it suffices to show that every automorphism of $R$ which fixes $a$ and the quotient $R/\ann(\m)$ pointwise maps $b$ to another realization of $ \textnormal{d}_p x \varphi(x,y)$.
	
	Let $\sigma\in\Aut(R)$ be an automorphism which fixes the tuple $a$ and the quotient $R/\ann(\m)$ pointwise. Note that  $\sigma$ must fix the set $\m^2$ pointwise as well. Indeed, given $x,y\in\m$ write $\sigma(x)=x+x_0$ and $\sigma(y)=y+y_0$ for some $x_0,y_0\in \ann(\m)$, so we have that
	\[
	\sigma(xy)=	\sigma(x)\sigma(y) = (x+x_0)(y+y_0) = xy.
	\]
	Hence $\sigma$ fixes the $\F_q$-vector space $\langle\m^2 , a\rangle_{\F_q}$ pointwise. Since for $k<j\leq m$ the difference
	$b_{j} - \sum_{i=1}^{j-1}\xi_{j,i}b_i$ belongs to $\langle \m^2 , a\rangle_{\F_q}$, we deduce for $k<j\le m$ that
	\[
	\sigma(b_{j}) - \sum_{i=1}^{j-1}\xi_{j,i}\sigma(b_i) = b_{j} - \sum_{i=1}^{j-1}\xi_{j,i}b_i .
	\]
	Furthermore, note that the elements $\sigma(b_1),\ldots,\sigma(b_k)$ are again $\F_q$-linearly independent over $\langle\m^2 , a\rangle_{\F_q}$. Therefore, we have that $R\models\psi\big(\sigma(b)\big)$, where $\psi(y_1,\ldots,y_m)$ is the following formula with parameters in $R_0$:
	\[
	\bigwedge_{i=1}^k \big( y_i - b_i \in\ann(\mathfrak m) \ \land 
	\phi(y_1,\ldots,y_k)  \big) \land \\
	\bigwedge_{j=k+1}^my_{j} - \sum_{i=1}^{j-1}\xi_{j,i}y_i = b_{j} - \sum_{i=1}^{j-1}\xi_{j,i}b_i.
	\]
	Hence, we need to show that $R\models \forall y_1 \ldots \forall y_m 
	\big( \psi(y_1,\ldots,y_m)\rightarrow\textnormal{d}_p x \varphi(x,y_1,\ldots,y_m) \big)$.
	Since $R_0$ is an elementary substructure of $R$, it suffices to show that 
	\[
	R_0\models \forall y_1 \ldots \forall y_m 
	\big( \psi(y_1,\ldots,y_m)\rightarrow\textnormal{d}_p x \varphi(x,y_1,\ldots,y_m) \big).
	\]
	So, let $b'=(b'_1,\ldots,b'_m)$ be a tuple of elements in $R_0$ such that $R_0\models \psi(b')$.
	We aim to show that $R_0\models \textnormal{d}_p x \varphi(x,b')$ or equivalently
	that $R\models \varphi(a,b')$. Hence, it suffices to construct an automorphism of $R$ which fixes the tuple $a$ and maps $b$ to $b'$.
	Since $R\models \psi(b')$, 
	the elements $b'_1,\ldots,b'_k$ are
	$\F_q$-linearly independent over $\langle \m^2 , a\rangle_{\F_q}$ and  for each $1\leq i \leq k$ we have that $b'_i - b_i\in \ann(\m)$. 
	Choose a $\mathbb F_q$-basis $\{c_1,\ldots,c_s\}$ of
	$\langle b_1 - b'_1,\ldots,b_k - b'_k\rangle_{\F_q}$ over $\langle \m^2 , a,b_1,\ldots,b_k\rangle_{\F_q}$ and a $\F_q$-basis $\{c'_1,\ldots,c'_{s'}\}$ of
	$\langle b_1 - b'_1,\ldots,b_k - b'_k\rangle_{\F_q}$ over $\langle \m^2 , a,b'_1,\ldots,b'_k\rangle_{\F_q}$. 
	Note that
	\begin{align*}
	\langle \m^2 , a,b_1,\ldots,b_k,c_1,\ldots,c_s\rangle_{ \F_q} &
	=\langle \m^2 , a,b_1,\ldots,b_k,b'_1,\ldots,b'_k\rangle_{ \F_q}  \\
	 & =\langle \m^2 , a,b'_1,\ldots,b'_k,c'_1,\ldots,c'_{s'}\rangle_{ \F_q},
	\end{align*}
	thus it follows that $s'=s$.
	Now choose a $\F_q$-basis  $(d_i)_{i\in I}$ of $\ann(\m)$ over \[\langle \m^2 , a, b_1,\ldots,b_k,b'_1,\ldots,b'_k\rangle_{ \F_q}\] and then
	a $\F_q$-basis  $(e_j)_{j\in J}$ of $\m$ over \[\langle \ann(\m), \m^2 , a, b_1,\ldots,b_k,b'_1,\ldots,b'_k\rangle_{\F_q}.\]
	Note that 
	\[
	\{b_1,\ldots,b_k\} \cup \{c_1,\ldots,c_s\} \cup \{d_i \ |\ i\in I \} \cup \{e_j \ |\ j\in J \}
	\]
	and
	\[
	\{b'_1,\ldots,b'_k\} \cup \{c'_1,\ldots,c'_s\} \cup \{d_i \ |\ i\in I \} \cup \{e_j \ |\ j\in J \}
	\]
	are both a $\F_q$-basis of $\m$ over $\langle \m^2 , a\rangle_{ \F_q}$.
	Therefore there exists an $\F_q$-linear bijection $g:\m \rightarrow \m$ which is the identity on
	\[
	\langle \m^2 , a\rangle_{\F_q} \cup \{d_i \ |\ i\in I \} \cup \{e_j \ |\ j\in J \}
	\] and maps 
	$b_i$ to $b'_i$ for $1\leq i\leq k$ and $c_j$ to $c'_j$ for $1\leq j \leq s$.
	Since $g$ fixes $\langle\m^2 , a\rangle_{ \F_q}$ pointwise and for $k<j\leq m$ the element
	\[
	b'_{j} - \sum_{i=1}^{j-1}\xi_{j,i}b'_i = b_{j} - \sum_{i=1}^{j-1}\xi_{j,i}b_i 
	\]
	lies in $\langle \m^2 , a\rangle_{\mathbb F_q}$, it recursively follows that
	$g(b_j)=b'_j$. Also, since the differences $g(b_i)-b_i=b'_i - b_i$ and hence also $g(c_j)-c'_j=c'_j - c_j$ lie in $\ann(\m)$ for $1\leq i \leq k$ and $1\leq j \leq s$, it follows that $g(m)-m\in \ann(\m)$ for every $m\in \m$. 
	Therefore, we can now conclude with Lemma \ref{L:Auto1} that the map $\sigma:R\rightarrow R$ defined by $\sigma(\alpha+a)=\alpha+g(a)$ is the desired automorphism.
\end{proof}

%\begin{remark}
%In the proof above we have shown  $\cb(a/R_0))\in\acl(a,R/\ann(\m))$ for every canonical base with $a$ being a real tuple. It is worth noticing that this does not mean that $R$ has the strong CBP (see \cite[Definition 3.1]{PP17}) whenever it satisfies CBP.
%\end{remark}
As an application of the latter two results we obtain the following example of an uncountably categorical local ring with a finite residue field that satisfies the CBP but it is not almost strongly minimal.

\begin{example}\label{E:ring2}
Fix an uncountable algebraically closed field $F$ of positive characteristic $p>0$ and let $\m$ denote the maximal ideal of the local ring $F[x]/(x^3)$. 
Since $\ann(\m)$ equals the ideal $\{ \alpha x^2 + (x^3)  \ | \ \alpha\in F \}\subsetneq \m$, the local ring $R = \F_p \oplus \m$ is not almost strongly minimal by Proposition \ref{P:AlmostStronglyMinimal}. On the other hand, since $R/\ann(\m)$ is an almost strongly minimal set, the ring $R$ has the CBP by Theorem \ref{T:CBP}. 

We remark that $R$ interprets an infinite algebraically closed field, so it is not one-based. Indeed, let $h:\mathfrak m\rightarrow \mathfrak m$ be the definable map which maps the element $a\in\m$ to the product $a\cdot \big(x+(x^3)\big)$. Given $a,b\in \ann(\mathfrak m)$, choose $a',b' \in \m$ such that $h(a')=a$ and $h(b')=b$. Set $a\odot b=a'\cdot b'$ and note that $a\odot b$ does not depend on the choices made for $a'$ and $b'$. It is easy to verify that the interpretable structure $( \ann(\m) , +, \odot)$ is an algebraically closed field isomorphic to $F$. 

More generally, let $\mathfrak n$ denote the maximal ideal of the local ring $F[x]/(x^n)$ for some fixed natural number $n > 3$. The ring $S=\mathbb F_p \oplus \mathfrak n$ is again not almost strongly minimal. The quotient $S/\ann(\mathfrak n)$ is in contrast to the previous example, when considered as a pure ring, not almost strongly minimal by Proposition \ref{P:AlmostStronglyMinimal}. However, it is not difficult to show that $S/\ann(\mathfrak n)$ is again an almost strongly minimal subset of the ring $S$, so the latter has again the CBP by Theorem \ref{T:CBP}. 
\end{example}

\begin{remark}
	In the proof of Theorem \ref{T:CBP}  we have shown that every canonical base  $\cb(a/R_0)$ belongs to $\acl(a,R/\ann(\m))$, whenever $R/\ann(\m)$ is almost strongly minimal. It is worth noticing that this does not mean that $R$ has the strong CBP in the sense of \cite[Definition 3.1]{PP17}, since the tuple $a$ consists only of elements from $R$ and not imaginaries. In fact, it is not difficult to see that the ring $R=\F_p\oplus \m$ from Example \ref{E:ring2} does not satisfy the strong CBP. 
\end{remark}

\subsubsection{A new example} We finish the paper by constructing a finite Morley rank commutative unitary local ring with $\F_q$ as a residue field and whose theory is CM-trivial but does not satisfy the CBP. 

Let $k$ be an arbitary field. Given a $2$-step nilpotent Lie $k$-algebra $\h$, let $S(\h)$ be the set $\h\times \h$ equipped with addition coordinatewise and multiplication given by
\[
(a_1,a_2)\cdot (b_1,b_2) = \big( 0, [a_1,b_2] + [b_1,a_2] \big).
\]
This operation is commutative and also associative, since $\h$ is a $2$-step nilpotent Lie algebra. Hence, we deduce from linearity of the Lie bracket that $S(\h)$ is a commutative ring. In addition, observe that $S(\h)$ is a $k$-vector space. So, we can consider the triangular ring $\Lambda(k,S(\h))$, which is isomorphic to $k \oplus S(\h)$. Furthermore, observe that $S(\h)$ is a nilpotent ideal of $\Lambda(k,S(\h))$. So, the set $\Lambda(k,S(\h))\setminus S(\h)$ consists of units and hence  $\Lambda(k,S(\h))$ is local with $S(\h)$ as the maximal ideal.

\begin{prop}\label{P:CM}
Let $\B$ be Baudisch's Lie $\F_q$-algebra. The theory of the commutative unitary local ring $R=\Lambda(\F_q,S(\B))$ has finite Morley rank and it  is CM-trivial but not one-based.  
\end{prop}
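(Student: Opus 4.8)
The plan is to get finite Morley rank and CM\nobreakdash-triviality for free from the ambient structure $\B$, and to extract the failure of one\nobreakdash-basedness from the definable pairing that the ring multiplication induces. First I would observe that $R=\Lambda(\F_q,S(\B))$ is interpretable in $\B$: both the addition and the multiplication $(a_1,a_2)\cdot(b_1,b_2)=(0,[a_1,b_2]+[b_1,a_2])$ on $S(\B)=\B\times\B$ are defined solely from the Lie bracket, and forming $\Lambda(\F_q,-)$ only adjoins the finite (hence definable) scalar field $\F_q$. Since $\B$ has finite Morley rank by Fact \ref{F:Baudisch}, so does $R$; and since $\B$ is CM\nobreakdash-trivial, N\"ubling's theorem \cite[Theorem 4.2]{hN05} gives that the theory of $R$ is CM\nobreakdash-trivial, exactly as for $\B\ltimes\B^{+}$ in Theorem \ref{T:Lie_B}.

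For the failure of one\nobreakdash-basedness I would first record the relevant definable data. Writing $\m=S(\B)$ for the maximal ideal and using $\B_2=[\B,\B]=\z(\B)$ together with $[\B,\B_2]=0$ (Fact \ref{F:Baudisch}), a direct computation gives
\[
\m^2=\{0\}\times\B_2 \qquad\text{and}\qquad \ann(\m)=\B_2\times\B_2 ,
\]
so that $\m^3=0$, while $\B$ being non-abelian guarantees $\m^2\neq\{0\}$. Consequently $W:=\m^2$ and the connected group $V:=\m/\ann(\m)\cong\B_1\times\B_1$ are definable and infinite (as $\B_1$ is strongly minimal), and the ring multiplication descends to a definable bi\nobreakdash-additive pairing $\beta\colon V\times V\to W$ given by $\beta(\bar s,\bar t)=s\cdot t$. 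The key point is that $\beta$ is \emph{non-degenerate}, and for a trivial reason: by the very definition of the annihilator, $\beta(\bar s,\cdot)\equiv 0$ holds if and only if $s\in\ann(\m)$, that is, $\bar s=0$ in $V$.

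Now I would run the standard one\nobreakdash-based group argument on $G:=V\times W$. For $v\in V$ consider the graph
\[
C_v=\big\{\,(w,\beta(v,w))\ \big|\ w\in V\,\big\}\leq V\times W ,
\]
which is a connected definable subgroup, being the graph of the additive map $\beta(v,\cdot)$ on the connected group $V$. Non-degeneracy of $\beta$ makes the assignment $v\mapsto C_v$ injective and shows that $v\in\dcl(\ulcorner C_v\urcorner)$. If the theory of $R$ were one\nobreakdash-based, then the interpretable group $G$ would be one\nobreakdash-based, so by the structure theory of one\nobreakdash-based groups (see \cite{TZ12}) every connected definable subgroup is $\acl(\emptyset)$-definable; in particular $\ulcorner C_v\urcorner\in\acl(\emptyset)$ and hence $v\in\acl(\emptyset)$ for every $v\in V$. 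This contradicts $V$ being infinite in a saturated model, so $R$ is not one\nobreakdash-based.

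The tempting alternative route---interpret Baudisch's bracket $\mu=[\cdot\,,\cdot]\colon\B_1\times\B_1\to\B_2$ directly and quote that $\B$ is not one\nobreakdash-based---runs into a genuine obstruction: commutativity of $R$ forces $\beta$ to be the symmetric (hyperbolic) form $\beta((x_1,x_2),(y_1,y_2))=[x_1,y_2]+[y_1,x_2]$ of $\mu$, and the two copies of $\B_1$ inside $V$ are \emph{not} separately definable, since the automorphisms produced by Lemma \ref{L:Auto1} mix them while fixing $\m^2$ pointwise. The non-degeneracy argument above sidesteps this entirely, so the remaining care\nobreakdash-points are only bookkeeping in $R^{\mathrm{eq}}$---that $\beta$, the quotient $V$ and the family $\{C_v\}$ are set up as genuine definable objects of $R$, so that one\nobreakdash-basedness of $R$ really transfers to $G$---and the (immediate) verifications that $\m^2\neq\{0\}$ and that $V$ is infinite and connected, all of which come from $\B$ being a non-abelian connected Lie algebra with strongly minimal $\B_1$. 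Here CM\nobreakdash-triviality is precisely what forbids $R$ from interpreting a field, so this failure of one\nobreakdash-basedness is genuinely of Baudisch type rather than arising from a hidden field as in Example \ref{E:ring2}.
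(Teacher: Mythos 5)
Correct, and essentially the paper's own argument: the paper likewise gets finite Morley rank and CM-triviality from definability of $R$ in $\B$ plus N\"ubling's theorem, and refutes one-basedness by the very same mechanism you use---one-basedness forces the connected graphs of the multiplication maps (the paper's $H_a$ from Proposition \ref{P:Ann}, your $C_v$) to have canonical parameters in $\acl(\emptyset)$, so that $R^\circ/\ann(R^\circ)=\m/\ann(\m)$ must be finite, which contradicts $\ann(S(\B))=\z(\B)\times\z(\B)$ together with $\B$ being non-abelian (in your phrasing: non-degeneracy of $\beta$ plus $V$ infinite). One immaterial slip in your closing aside: the automorphisms produced by Lemma \ref{L:Auto1} satisfy $g(a)-a\in\ann(\m)$, hence they fix $V=\m/\ann(\m)$ pointwise and cannot ``mix'' the two copies of $\B_1$ inside $V$, so that remark would need a different justification---but nothing in your actual proof depends on it.
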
 
\begin{proof}
 The ring $R$ is definable in $\B$, so it has finite Morley rank. In addition, it is CM-trivial by N\"ubling's result \cite{hN05}. However, it is not one-based. For otherwise, a similar argument as in Propositon \ref{P:Ann} yields that $R^\circ/\ann(R^\circ)$ is finite. Since $R^\circ = S(\B)$ and $\ann(S(\B)) = \z(\B) \times \z(\B)$, it then follows that $\B$ is abelian,  contradiction.
\end{proof}

Next, consider the semidirect sum $\B\ltimes \B^+$, which is $\Lambda(\B,\B^+)$. The commutative unitary local ring $\Lambda(\F_q,S(\B\ltimes \B^+))$ is a finite Morley rank ring whose theory is CM-trivial by N\"ubling's result \cite{hN05}, since it is clearly definable in $\B$. To ease notation, we regard it as $\F_q \oplus S(\B\ltimes \B^+)$. In particular, its elements are of the form $\alpha + ((a_1,a_2),(b_1,b_2))$ where $\alpha\in \F_q$ and $a_1,a_2,b_1,b_2\in \B$. 

We give a refinement of Lemma \ref{L:Analysis} for this ring.
\begin{lemma}\label{L:Analysis2}
Let $R=\Lambda(\F_q,S(\B\ltimes \B^+))$. The set  $R/\ann(R^\circ)$ is analysable to the definable subgroup 
\[
\A:=(\{0\}  \times \{0\} ) \times (\{0\}\times \z(\B)) \subset S(\B\ltimes\B^+).
\]  
In particular, if $R$ has the CBP, then $R/\ann(R^\circ)$ is almost internal to $\A$.
\end{lemma}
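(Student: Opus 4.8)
The plan is to reproduce the proof of Lemma~\ref{L:Analysis}, while sharpening its target from a subgroup of $\{0\}\times S(\h)$, with $\h:=\B\ltimes\B^{+}$, down to the single strongly minimal slot $\A$, by using the explicit bracket of $\h$. First I would collect the algebraic data. Since $\B$ is connected, so are $\h$ and $S(\h)=\h\times\h$; hence $R^{\circ}=S(\h)$ and $R/R^{\circ}\cong\F_q$ is finite. A direct computation gives $\ann(R^{\circ})=\ann(S(\h))=\z(\h)\times\z(\h)$, and as $\z(\h)=\z(\B)\times\z(\B)=\B_2\times\B_2$ this is $\B_2^{4}$. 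Thus $R/\ann(R^{\circ})$ contains $S(\h)/\ann(S(\h))$ with finite quotient $\F_q$, and it suffices to analyse the latter to $\A$ (which is definably isomorphic to the strongly minimal group $\z(\B)$).

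The core is a two-step analysis by multiplications with fixed parameters that land inside $\A$. Writing a generic element as $\mu=((a_1,a_2),(b_1,b_2))$ and using the multiplication of $S(\h)$ together with the bracket of $\h$, the parameters $\nu_d=((0,0),(0,d))$ and $\nu'_c=((0,c),(0,0))$ give $\mu\cdot\nu_d=((0,0),(0,[a_1,d]))$ and $\mu\cdot\nu'_c=((0,0),(0,[c,b_1]))$, both in $\A$. Because $\z(\B)$ is precisely the radical of the bracket, the descending chain condition on definable subgroups lets me choose finitely many parameters $d$ and $c$ for which the resulting homomorphism into a power of $\A$ has kernel exactly $K:=\{((a_1,a_2),(b_1,b_2)):a_1,b_1\in\z(\B)\}$; then $S(\h)/K$ embeds definably into a power of $\A$ and so is $\A$-internal. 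On $K$ the components $a_1,b_1$ lie in $\z(\B)$, so the obstructing term $[a_1,d_1]+[c_1,b_1]$ in a product vanishes, and for $\nu''_d=((0,0),(d,0))$ and $\nu'''_c=((c,0),(0,0))$ one computes, for $\mu\in K$, that $\mu\cdot\nu''_d=((0,0),(0,[a_2,d]))$ and $\mu\cdot\nu'''_c=((0,0),(0,[c,b_2]))$, again in $\A$. The same radical computation recovers $a_2,b_2$ modulo $\z(\B)$, so $K/\ann(S(\h))$ is $\A$-internal too. Together with the finite quotient $R/R^{\circ}$, this shows that $R/\ann(R^{\circ})$ is $\A$-analysable.

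For the final clause I would argue as in Lemma~\ref{L:Analysis}(3). If the ring theory of $R$ has the CBP, then Proposition~\ref{P:Ann} applied to the connected ring $R^{\circ}=S(\h)$ gives that $R^{\circ}/\ann(R^{\circ})$, and hence $R/\ann(R^{\circ})$, is almost $\Pp$-internal. Since this group was just shown to be $\A$-analysable, Fact~\ref{F:Internal} with $\Pp_0=\{\A\}$ and $\Pp_1=\Pp$ upgrades almost $\Pp$-internality to almost $\A$-internality, as required.

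The step I expect to be the main obstacle is forcing the products to land precisely in $\A$ rather than only in $S(\h)\cdot S(\h)=\{0\}\times(\B_2\times\B_2)$: any single multiplication that exposes $a_2$ or $b_2$ simultaneously produces a nonzero first $\B$-coordinate via $[a_1,d_1]+[c_1,b_1]$, so one cannot reach $\A$ in a single step. This is exactly what makes the filtration through $K$ necessary, as those cross terms are annihilated there. The second delicate ingredient is the non-degeneracy of Baudisch's bracket --- that $\z(\B)$ is the full radical and that, for enough generic $d$, the map $x\mapsto[x,d]$ is injective modulo $\z(\B)$ --- which is the point where the specific structure of $\B$, rather than that of an arbitrary $2$-step nilpotent algebra, is used.
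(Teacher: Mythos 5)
Your proposal is correct and follows essentially the same route as the paper: the same two-step filtration of $S(\B\ltimes\B^+)$ by kernels of right-multiplication maps (first isolating the coordinates $a_1,b_1$ modulo $\z(\B)$, then $a_2,b_2$ on the resulting kernel), landing in $\A$, followed by Proposition \ref{P:Ann} and Fact \ref{F:Internal} for the CBP clause. The only cosmetic difference is that the paper fixes a single $z\in\B\setminus\z(\B)$ and uses Fact \ref{F:Baudisch} (so $\z(\B)=\{x:[x,z]=0\}$ and $[z,\B]=\z(\B)$, which also gives definability of $\A$ and that the images are exactly $\A$), whereas you invoke the descending chain condition to pick finitely many parameters; both are valid.
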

\begin{proof} Set $S=S(\B\ltimes \B^+)$. Since $S$ is definable in the structure of $\B$ and there, the group $S^+$ is definably isomorphic to the additive group $\bigoplus_{i=1}^4\B^+$, we deduce that $R^\circ = S$, as $\B$ is connected by Fact \ref{F:Baudisch}. Therefore, we need to see that $S/\ann(S)$ is analysable to the subgroup $\A$ of the statement.

Now, given an element $y\in S$, let $\rho_y : S\to S$ be the homomorphism $\rho_y(x) = x\cdot y$. So, we have for $x=((x_1,x_2),(x_3,x_4))$ and $y=((y_1,y_2),(y_3,y_4))$ that
\begin{align*}
\rho_y(x) & =  \big( (0,0) , [ (x_1,x_2),(y_3,y_4)]_\g + [(y_1,y_2), (x_3,x_4)]_\g \big)\\ & = \big( (0,0) , ([x_1,y_3]_\B + [y_1,x_3]_\B , [x_1,y_4]_\B - [y_3,x_2]_\B + [y_1,x_4]_\B - [x_3,y_2]_\B ) \big).
\end{align*} 
Fix some element $z\in \B\setminus \z(\B)$ and note that $\z(\B) = \{ x\in \B \ | \ [x,z] = 0\}$ by Fact \ref{F:Baudisch}, since $\B$ has Morley rank $2$. Also, it follows that $\z(\B) = [z,\B]$. 
Set $b=((0,z),(0,0))$ and $d=((0,0),(0,z))$, and let 
\[
f_1:S\to S\times S , \ x\mapsto f_1(x)= (\rho_b(x),\rho_d(x)). 
\] 
The above equation yields that 
\[
\mathrm{im}(\rho_b) = (\{0\} \times \{0\}) \times (\{0\} \times [\B,z]) = (\{0\} \times \{0\}) \times (\{0\} \times \z(\B)) =\A.
\]
Likewise, for $\mathrm{im}(\rho_d)$. In particular, observe that this yields that $\A$ is definable. On the other hand, the kernel is
\[
S_1 = \left\{ ( (x_1,x_2),(x_3,x_4) )\in S \ | \ [x_1,z]_\B = 0 = [x_3,z]_\B  \right\} = \big( (\z(\B)\times \B) \times (\z(\B)\times \B)  \big).
\] 
Thus, we get that $S/S_1$ is internal to $\A$. 

Now, set $a=((z,0),(0,0))$ and $c=((0,0),(z,0))$, and consider the additive homomorphisms \[
f_2:S_1\to S\times S , \ x\mapsto f_2(x)= (\rho_a(x),\rho_c(x)). 
\] 
Since for $x=((x_1,x_2),(x_3,x_4))\in S_1$ we have that $x_1,x_3\in \z(\B)$, we obtain that the images of $\rho_a$ and $\rho_c$ are $\A$. Furthermore, the kernel of $f_2$ is  
\begin{align*}
S_2 & = \left\{ ( (x_1,x_2),(x_3,x_4) )\in S_1 \ | \ [x_2,z]_\B = 0 = [x_4,z]_\B  \right\} \\ & = \big( (\z(\B)\times \z(\B)) \times (\z(\B)\times \z(\B))  \big) = \ann(S).
\end{align*}
Therefore, it follows that $S_1/S_2$ is internal to $\A$. Since $\ann(S) =S_2$, we deduce that $S/\ann(S)$ is analysable to $\A$, as desired. 

\noindent The second part of the statement follows as in Lemma \ref{L:Analysis}. Indeed, we know by  Proposition \ref{P:Ann} that $R^\circ/\ann(R^\circ)$ is almost $\Pp$-internal. By the above we also have that it is analysable to $\A$. So, Fact \ref{F:Internal} yields that $R^\circ/\ann(R^\circ)$ is almost internal to $\A$. This yields the statement.  	
\end{proof}
Now we state and prove the following:

\begin{theorem}\label{T:CM-trivialRing}
Let $\B$ be Baudisch's Lie $\F_q$-algebra.	The theory of the commutative unitary local ring  $\Lambda(\F_q,S(\B\ltimes \B^+))$ with finite residue field has finite Morley rank, is CM-trivial and does not satisfy the CBP. 
\end{theorem}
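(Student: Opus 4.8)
The finite Morley rank and CM-triviality of $R=\Lambda(\F_q,S(\B\ltimes\B^+))$ have already been recorded (it is definable in $\B$ and is local with residue field $\F_q$), so the plan is to prove only that $R$ does not satisfy the CBP. Suppose towards a contradiction that it does. Write $\g=\B\ltimes\B^+$ and $\m=S(\g)$ for the maximal ideal, so that $R^\circ=\m$ and, by the computation in the proof of Lemma \ref{L:Analysis2}, $\ann(R^\circ)=\ann(\m)=(\z(\B))^4$ with $\z(\B)=\B_2$. I identify an element of $\m=\g\times\g$ with a quadruple $(a_1,a_2,b_1,b_2)$ of elements of $\B$ via $((a_1,a_2),(b_1,b_2))$. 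Lemma \ref{L:Analysis2} then yields that $R/\ann(\m)$ is almost internal to the definable group $\A=(\{0\}\times\{0\})\times(\{0\}\times\z(\B))$, witnessed over some finite set $A$ containing the canonical parameter of $\A$.

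The heart of the argument is a lifting of automorphisms, which is the genuinely new ingredient. First I would check that any $\F_q$-linear derivation $\delta$ of $\B$ with $\delta(\B_1)\subseteq\B_1$ produces a ring automorphism of $R$. Indeed, applying Lemma \ref{L:Aut} to $\g=\Lambda(\B,\B^+)$, the map $\sigma_\delta(a,x)=(a,x+\delta(a))$ is a Lie automorphism of $\g$; since the product on $S(\g)$ is expressed entirely through the bracket of $\g$, a direct computation shows that $G_\delta(u,v)=(\sigma_\delta u,\sigma_\delta v)$ is a ring automorphism of $S(\g)$; and as $\delta$ is $\F_q$-linear, so is $G_\delta$, whence $\Sigma_\delta(\alpha+s)=\alpha+G_\delta(s)$ is a ring automorphism of $R=\Lambda(\F_q,S(\g))$. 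Concretely, $\Sigma_\delta$ fixes $\F_q$ and acts by $(a_1,a_2,b_1,b_2)\mapsto(a_1,a_2+\delta(a_1),b_1,b_2+\delta(b_1))$. In particular $\Sigma_\delta$ fixes $\A$ pointwise, because the first and third slots vanish on $\A$, and it stabilises $\ann(\m)$ since $\delta(\B_2)\subseteq\B_2$, hence induces a well-defined map on $R/\ann(\m)$.

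With these automorphisms in hand I would run the orbit argument of Theorem \ref{T:Lie_B} almost verbatim, with $\ann(\m)$ in place of $\z(\g)$ and $\A$ in place of $H$. Fix a countable elementary substructure, in the full Baudisch structure, containing $A$, choose $a\in\B_1$ generic over it, and set $\gamma=(a,0,0,0)$. Using Chatzidakis' analysability of canonical bases together with \cite[Lemma 2.1]{BMPW15}, exactly as in Theorem \ref{T:Lie_B}, I would spread the internality witnesses into an initial segment of a Morley sequence to obtain independent elements $a,a_1,\dots,a_n\in\B_1$ and finite tuples $\bar c_0,\dots,\bar c_n$ from $\A$ such that $\gamma+\ann(\m)\in\acl_0(\gamma_1,\dots,\gamma_n,\bar c_0,\dots,\bar c_n)$, where $\gamma_i=(a_i,0,0,0)$. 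For every non-zero $e\in\B_1$ independent from $a,a_1,\dots,a_n$, Fact \ref{F:BJ} supplies a derivation $\delta_e$ of $\B$ with $\delta_e(\B_1)\subseteq\B_1$, vanishing at $a_1,\dots,a_n$ and with $\delta_e(a)=e$. The associated automorphism $\Sigma_{\delta_e}$ then fixes each $\gamma_i$ (as $\delta_e(a_i)=0$) and fixes $\A$ pointwise, yet $\Sigma_{\delta_e}(\gamma)=(a,e,0,0)$. Choosing infinitely many such $e_i$ that are mutually independent over the relevant parameters, for $i\neq j$ we get $\Sigma_{\delta_{e_i}}(\gamma)-\Sigma_{\delta_{e_j}}(\gamma)=(0,e_i-e_j,0,0)\notin\ann(\m)=(\B_2)^4$, since $e_i-e_j\in\B_1\setminus\{0\}$ and $\B_1\cap\B_2=\{0\}$. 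Thus $\gamma+\ann(\m)$ admits infinitely many images under automorphisms fixing $\gamma_1,\dots,\gamma_n,\bar c_0,\dots,\bar c_n$, contradicting its algebraicity over that set and so refuting the CBP.

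I expect the lifting in the second paragraph to be new but routine, amounting to one bracket computation. The delicate point is the transfer in the third paragraph: justifying, via the reduct $T_0$ of the full Baudisch structure and \cite[Lemma 2.1]{BMPW15}, that the internality witnesses may be replaced by an independent family inside $\B_1$ together with tuples lying in $\A$, so that Fact \ref{F:BJ} applies and the resulting automorphisms simultaneously fix all parameters while still moving $\gamma$ in the second slot. This is the same obstacle already handled in Theorem \ref{T:Lie_B}, so I would import that argument essentially unchanged.
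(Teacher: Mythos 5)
Your proposal is correct and takes essentially the paper's approach: both reduce via Lemma \ref{L:Analysis2} to almost $\A$-internality of $R/\ann(R^\circ)$ and refute it with ring automorphisms of $R$ obtained by acting diagonally on $S(\B\ltimes\B^+)$ with the derivation-induced automorphisms of $\g=\B\ltimes\B^+$ coming from Lemma \ref{L:Aut} and Fact \ref{F:BJ}. The only difference is organisational: you rerun the Morley-sequence/canonical-base argument of Theorem \ref{T:Lie_B} directly inside $R$, whereas the paper quotes that proof's conclusion that $\g/\z(\g)$ is not almost internal to $\{0\}\times\z(\B)$ and transfers the resulting automorphisms of $\g$ through the same diagonal lift (your explicit verification of the $\F_q$-linearity of that lift is a detail the paper leaves implicit).
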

 \begin{proof} Set $\g=\B\ltimes \B^+$, $S= S(\g)$ and $R=\Lambda(\F_q,S)$. As $\B$ is a saturated model of its theory, so is $R$ since it is definable in the theory of $\B$. Furthermore, it has finite Morley rank and it is CM-trivial by N\"ubling's result \cite{hN05}.
  	
 We need only prove that $R$ does not satisfy the CBP.  So, suppose to get a contradiction that the theory of $R$ satisfies it and thus $R/\ann(R^\circ)$ is almost $\A$-internal by Lemma \ref{L:Analysis2}, where 
 \[
 \A=(\{0\}  \times \{0\} ) \times (\{0\}\times \z(\B)) \subset \{0_\g\} \times \g \subset S.
 \] 
 Let $A_0\subset R$ be a finite subset witnessing this. Since every ring automorphism of $R$ fixes pointwise $\F_q\subset R$, we may assume that $A_0\subset S$. Regard $S$ as the Cartesian product $\g\times \g$ and let $A\subset \g$ be a set containing all element appearing in a pair of $A_0$.  It follows from the proof of Theorem \ref{T:Lie_B} that $\g/\z(\g)$ is not almost internal to the definable subgroup  $\{0\}\times \B_2 = \{0\}\times \z(\B)$. Hence, for some element $a\in \g$ there are infinitely many automorphisms $\sigma_i\in \Aut(\g)$, for $i\in \N$, fixing pointwise $A\cup \left(\{0\}\times \z(\B) \right)$ such that $\sigma_i(a)- \sigma_j(a)\not\in \z(\g)$ for every distinct $i,j\in \N$.
 
 Now, consider the element $(a,0)\in \g\times\g =S$ and for each natural number $i$ let $\tau_i\in\Aut(R)$ be the automorphism defined by
 \[
 \tau_i \big( \alpha +  (c,d) ) \big) = \alpha +  (\sigma_i(c), \sigma_i(d)).
 \]
 It is clear that each $\tau_i$ is an automorphism of $R$ which fixes pointwise $A_0$ and $\A$, since $\sigma_i$ fixes $A\cup (\{0\}\times \z(\B))$. Also, for $i<j$ we have that
 \[
 \tau_i\big( (a,0) \big) - \tau_j\big( (a,0) \big)  = (\sigma_i(a)-\sigma_j(a),0) \not\in \z(\g) \times \z(\g) = \ann(S) = \ann(R^\circ).
 \]
 This shows that $R/\ann(R^\circ)$ is not almost internal to $\A$, which is a contradiction.
 \end{proof}


\begin{thebibliography}{99}
\bibitem{aB96}
A. Baudisch. {\em A new uncountably categorical group}. Trans. Amer. Math. Soc. 348 (1996), no. 10, 3889--3940.

\bibitem{aB09}
A. Baudisch. {\em The additive collapse}. J. Math. Logic 9 (2009), no. 2, 241--284.



\bibitem{BMPW15}
T. Blossier, A. Martin-Pizarro and F. O. Wagner. {\em G\'eom\'etrie relatives}. J. Eur. Math. Soc. 17 (2015), 229--258.

\bibitem{BJ22}
T. Blossier and L. Jimenez. {\em CM-trivial structures without the canonical base property}. Model Theory 1 (2022), no. 1, 31--54.

	\bibitem{BN94}
A. Borovik and A. Nesin. {\em Groups of finite Morley rank}. Oxford Logic Guides, 26. Oxford Science Publications. The Clarendon Press, Oxford University Press, New York, 1994.


\bibitem{zC12}
	Z. Chatzidakis.
	\newblock {\em A note on canonical bases and modular types in
		supersimple theories}. Confluentes Math. 21 (2012), no. 3, pp. 34.

\bibitem{zC14}
Z. Chatzidakis.
\newblock {\em Model theory of difference fields and applications to algebraic dynamics}. Proceedings of the International Congress of Mathematicians--Seoul 2014. Vol. II, 1--14, Kyung Moon Sa, Seoul, 2014.

	
	\bibitem{CR76}
	G. Cherlin and J. Reineke. {\em Categoricity and stability of commutative rings}. Annals Math. Logic 9 (1976), no. 4, 367--399.
	
\bibitem{aC77}
A. S. Chikhachev. {\em $\aleph_1$-categorical commutative rings}. Siberian Math. J. 18 (1977), 644--649.

\bibitem{iC46}
I. S. Cohen. {\em On the Structure and Ideal Theory of Complete Local Rings}. Trans. Amer. Math. Soc. 59 (1946), no. 1, 54--106.


\bibitem{HPP13}
E. Hrushovski, D. Palac\' in, and A. Pillay. {\em On the canonical base property}. Selecta Math. N.S. 19 (2013), no. 4, 865--877.
	
\bibitem{nJ62}
N. Jacobson. {\em Lie algebras}. Republication of the 1962 original. Dover Publications, Inc., New York, 1979. ix+331 pp.

\bibitem{KP06}
P. Kowalski and A. Pillay. {\em Quantifier elimination for algebraic D-groups}. Trans. Amer. Math. Soc. 358 (2006), no. 1, 167--181.

	\bibitem{tL01}
T. Y. Lam. {\em A first course in noncommutative rings}. Second edition. Graduate Texts in Mathematics, 131. Springer-Verlag, New York, 2001.	


	\bibitem{mL22}
	M. Loesch. {\em A (possibly new) structure without the canonical base property}. Fund. Math. 259 (2022), no. 1, 77--96.

	\bibitem{mL23}
M. Loesch. {\em Additive covers and the Canonical Base Property}. J. Symb. Logic 88 (2023), no. 1,  118--144.

\bibitem{aM71} A. Macintyre. {\em On $\omega_1$-categorical theories of fields}. Fund. Math. 71 (1971), no. 1, 1--25.

	\bibitem{MP08}
	R. Moosa and A. Pillay.\ {\em On canonical bases and internality criteria}. Illinois J. Math. 52 (2008), no. 3, 901--917.


\bibitem{mN62}
M. Nagata. Local rings. Interscience Tracts in Pure and Applied Mathematics, no. 13 Interscience Publishers (a division of John Wiley \& Sons, Inc.), New York-London, 1962. 

\bibitem{aN89}
A. Nesin. {\em Nonassociative rings of finite Morley rank}. In The model theory of groups, edited by A. Nesin and A. Pillay, Notre Dame Mathematical Lectures 11, Notre Dame Press, 1989, 117--137.

\bibitem{aN94}
A. Nowicki. {\em Rings and fields of constants for derivations in characteristic zero}. J. Pure Appl. Algebra 96 (1994), no. 1, 47--55.

\bibitem{hN05}
H. N\"ubling. {\em Reducts of stable, CM-trivial theories}. J. Symb. Logic 70 (2005), no. 4, 1025--1036.

	
\bibitem{PP17}
D. Palac\'in and A. Pillay. {\em On definable Galois groups and the strong canonical base property}. J. Math. Logic 17 (2017), no. 1, 1750002, 10 pp. 


	\bibitem{PW13}
	D. Palac\'in and F. O. Wagner. {\em Ample thoughts}. J. Symb. Logic 78 (2013), no. 2, 489--510.
	
%	\bibitem{PW14}
%	D. Palac\'in and F. O. Wagner. {\em Rigidity, internality and analysability}. Fund. Math. 224 (2014), 53--65.

\bibitem{aP95}
A. Pillay. {\em The Geometry of Forking and Groups of Finite Morley Rank}. J. Symb. Logic 60 (1995), no. 4, 1251--1259.
	
%	\bibitem{aP96}
%	A. Pillay.
%	\newblock {\em Geometric stability theory}. Oxford Logic Guides 32.
%	\newblock Oxford University Press, Oxford, GB, 1996.
%	
%%	\bibitem{pi01}
%	Anand Pillay. {\em Notes on analysability and canonical bases}.
%	e-print available at \url{http://www.math.uiuc.edu/People/pillay/remark.zoe.pdf}, 2001.


	
\bibitem{aP02}
A. Pillay. {\em Remarks on a theorem of Campana and Fujiki}. Fund. Math. 174 (2002),  187--192.

\bibitem{PZ03}
A. Pillay and M. Ziegler.
\newblock {\em Jet spaces of varieties over differential and difference fields}, Selecta Math. N.S. (2003) Vol. 9, pp. 579--599.


\bibitem{PR74}
K.-P. Podewski and J. Reineke. {\em An $\omega_1$-categorical ring which is not almost strongly minimal}. J. Symbolic Logic 39 (1974), 665--668. 

%\bibitem{jR73}
%J. Reineke. {\em An $\omega_1$-categorical theories of commutative rings}. Notices Amer. Math. Soc. 20 (1973), no. 3, A-340.

		
	\bibitem{TZ12} K. Tent and M. Ziegler. {\em A Course in Model Theory}. Lecture Notes in Logic 40, Cambridge University Press, 2012.

\bibitem{fW97}
F. O. Wagner. {\em Stable groups}. London Mathematical Society Lecture Note Series, 240. Cambridge University Press, Cambridge, 1997.

\bibitem{fW01}
F. O. Wagner. {\em Fields of Finite Morley Rank}. J. Symb. Logic 66 (2001), no. 2, 703--706.


\bibitem{ZS58}
O. Zariski and P. Samuel. Commutative algebra vol. I. The University Series in Higher Mathematics. D. Van Nostrand Company, 1958.

%\bibitem{bZ74} 
%B. Zilber. {\em Rings with $\aleph_1$-categorical theories}. Algebra and Logic 13 (1974), 168--187.

	\end{thebibliography}
\end{document}